\theoremstyle{plain}
\title[Hilbertian Jamison sequences and rigid dynamical systems]{Hilbertian Jamison sequences and rigid dynamical systems}
\author{Tanja Eisner}
\address{Korteweg de Vries Institut voor Wiskunde,
Universiteit van Amsterdam, P.O. Box 94248, 1090 GE Amsterdam, The Netherlands}
\email{T.Eisner@uva.nl}
\author{Sophie Grivaux}
\address{
Laboratoire Paul Painlev\' e, UMR 8524, Universit\'e  Lille 1, Cit\' e Scientifique, 59655 Villeneuve d'Ascq
Cedex, France}
\email{grivaux@math.univ-lille1.fr}
\subjclass{47A10, 37A05, 37A50, 47A10, 47B37}
\keywords{Linear dynamical systems, partially power-bounded operators, point spectrum of operators, hypercyclicity, weak mixing and
rigid dynamical systems, topologically rigid dynamical systems.}
\thanks{This work was partially supported by ANR-Projet Blanc DYNOP,
the European Social Fund and the Ministry of Science, Research
and the Arts Baden-W\"urttemberg.}
\def\T{\ensuremath{\mathbb T}}
\def\R{\ensuremath{\mathbb R}}
\def\Z{\ensuremath{\mathbb Z}}
\def\Q{\ensuremath{\mathbb Q}}
\def\N{\ensuremath{\mathbb N}}
\def\P{\ensuremath{\mathbb P}}
\newcommand{\sep}{separable}
\newcommand{\js}{Jamison sequence}
\newcommand{\hjs}{Hilbertian Jamison sequence}
\newcommand{\fhy}{frequently hypercyclic}
\newcommand{\ops}{operators}
\newcommand{\op}{operator}
\newcommand{\erg}{ergodic}
\newcommand{\eve}{eigenvector}
\newcommand{\eva}{eigenvalue}
\newcommand{\ps}{perfectly spanning set of eigenvectors associated to unimodular
eigenvalues}
\newcommand{\wrt}{with respect to}
\newcommand{\bs}{backward shift}
\newcommand{\proba}{probability}
\newcommand{\ga}{Gaussian}
\newcommand{\inv}{invariant}
\newcommand{\mea}{measure}
\newcommand{\nd}{non-degenerate}
\newcommand{\mpt}{measure-preserving transformation}
\newcommand{\wmx}{weakly mixing}
\newcommand{\rg}{rigid}
\newcommand{\ur}{uniformly rigid}
\newcommand{\rs}{rigidity sequence}
\newcommand{\urs}{uniform rigidity sequence}
\newcommand{\ifff}{if and only if}
\newcommand{\pss}[2]{\ensuremath{{\langle #1,#2\rangle}}}
\newcommand{\set}[1]{\left\{#1\right\}}
\newcommand{\eps}{\varepsilon}
\newtheorem{theorem}{Theorem}[section]
\newtheorem{lemma}[theorem]{Lemma}
\newtheorem{proposition}[theorem]{Proposition}
\newtheorem{corollary}[theorem]{Corollary}
\theoremstyle{definition}}
\theoremstyle{definition}\newtheorem{example}[theorem]{Example}}
\newtheorem{fact}[theorem]{Fact}
\theoremstyle{definition}\newtheorem{definition}[theorem]{Definition}}
\theoremstyle{definition}}
\theoremstyle{definition}\newtheorem{remark}[theorem]{Remark}}
\theoremstyle{definition}\newtheorem*{FFC Criterion}{Frequent
Faber-hypercyclicity Criterion}}
\newtheorem*{Hypercyclicity Criterion}{Hypercyclicity Criterion}
{\theoremstyle{definition}\newtheorem*{GS Criterion}{Godefroy-Shapiro
Criterion}}
\def\piednote#1{\let\oldfn=\thefootnote\def\thefootnote{}\footnote{\noindent#1}%
\addtocounter{footnote}{-1}\def\thefootnote{\oldfn}}
\begin{document}

\begin{abstract}
A strictly increasing sequence $(n_{k})_{k\geq 0}$ of positive integers is said to be a \emph{Hilbertian Jamison sequence} if for any bounded
\op\ $T$ on a \sep\ Hilbert space
such that $\sup_{k\geq 0}||T^{n_{k}}||<+\infty $, the set of eigenvalues of modulus $1$ of $T$ is at most countable. We first give a complete characterization of such sequences. We then turn to the study of rigidity sequences $(n_{k})_{k\geq 0}$ for weakly mixing dynamical systems on measure spaces, and give various conditions, some of which are closely related to the Jamison condition, for a sequence to be a rigidity sequence. We obtain on our way a complete characterization of topological rigidity and uniform rigidity sequences for linear dynamical systems, and we construct in this framework examples of dynamical systems which are both weakly mixing in the measure-theoretic sense and uniformly rigid.
\end{abstract}

\maketitle

\section{Introduction and main results}
We are concerned in this paper with the study of certain dynamical systems, in particular linear dynamical systems. Our main aim is the study of \emph{rigidity sequences} $(n_{k})_{k\geq 0}$ for weakly mixing dynamical systems on measure spaces, and we present tractable conditions on the sequence $(n_{k})_{k\geq 0}$ which imply that it is (or not) a rigidity sequence. Our conditions on the sequence $(n_{k})_{k\geq 0}$ come in part from the study of the so-called \emph{Jamison sequences}, which appear in the description of the relationship between partial power-boundedness of an \op\ on a \sep\ Banach space and the size of its unimodular point spectrum.
\par\smallskip
Let us now describe our results more precisely.
\subsection{A characterization of Hilbertian Jamison sequences}
Let $X$ be a \sep\ infinite-dimensional complex Banach space, and let $T\in\mathcal{B}(X)$ be a bounded operator on $X$. We are first going to study here the relationship between the behavior of the sequence $||T^n||$ of the norms of the powers of $T$, and the size of the unimodular point spectrum $\sigma_p(T)\cap\T$, i.e. the set of \eva s of $T$ of modulus $1$. It is known since an old result of Jamison \cite{J} that a slow growth of $||T^n||$ makes $\sigma_p(T)\cap\T$ small, and vice-versa. More precisely, the result of \cite{J} states that if $T$ is power-bounded, i.e. $\sup_{n\geq 0}||T^n||<+\infty$, then $\sigma_p(T)\cap\T$ is at most countable. For a sample of the kind of results which can be obtained in the other direction, let us mention the following result of Nikolskii \cite{Ni}: if $T$ is a bounded \op\ on a \sep\ Hilbert space such that $\sigma_p(T)\cap\T$ has positive Lebesgue \mea, then the series $\sum_{n\geq 0}||T^n||^{-2}$ is convergent. This has been generalized by Ransford in the paper \cite{R}, which renewed the interest in these matters. In particular Ransford started to investigate in \cite{R} the influence of partial power-boundedness of an \op\ on the size of its unimodular point spectrum. Let us recall the following definition:

\begin{definition}
Let $(n_{k})_{k\geq 0}$ be an increasing sequence of
positive integers,
and $T$ a bounded linear operator on the space $X$. We say that $T$ is \emph{partially
 power-bounded \wrt}\ $(n_{k})$ if $\sup_{k\geq
 0}||T^{n_{k}}||<+\infty$.
\end{definition}

In view of the result of Jamison, it was natural to investigate whether the partial power-boundedness of $T$ \wrt\ $(n_{k})$ implies that $\sigma_p(T)\cap\T$ is at most countable. It was shown in \cite{RR} by Ransford and Roginskaya that it is not the case: if $n_k=2^{2^k}$ for instance, there exist a \sep\ Banach space $X$ and $T\in\mathcal{B}(X)$ such that $\sup_{k\geq
 0}||T^{n_{k}}||$ is finite while $\sigma_p(T)\cap\T$ is uncountable. This question was investigated further in \cite{BaG1} and \cite{BaG2}, where the following definition was introduced:

\begin{definition}
Let $(n_{k})_{k\geq 0}$ be an increasing sequence of
integers. We say that $(n_{k})_{k\geq 0}$ is a \emph{Jamison sequence}
if for any separable Banach space $X$ and any bounded
\op\ $T$ on $X$, $\sigma _{p}(T)\cap\T$ is at most countable
as soon as $T$ is partially power-bounded \wrt\ $(n_{k})$.
\end{definition}

Whether $(n_{k})_{k\geq 0}$ is a Jamison sequence or not depends of
course on features of the sequence such as its growth, its
arithmetical properties, etc. A complete characterization of Jamison
sequences was obtained in \cite{BaG2}. It
is formulated
as follows:

\begin{theorem}\label{th0}
Let $(n_{k})_{k\geq 0}$ be an increasing sequence of
integers with $n_{0}=1$. The following assertions are equivalent:
\begin{itemize}
\item[(1)] $(n_{k})_{k\geq 0}$ is a \js;

\item[(2)] there exists a positive real number $\varepsilon $ such that
for every $\lambda \in\T\setminus \set{1}$, $$\sup_{k\geq 0}|\lambda ^{n_{k}}-1|
\geq \varepsilon .$$
\end{itemize}
\end{theorem}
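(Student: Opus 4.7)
The argument splits into two implications of very different flavors. For (2) $\Rightarrow$ (1) I would argue directly that any two normalized eigenvectors associated to distinct unimodular eigenvalues of $T$ are uniformly separated in $X$. Setting $M := \sup_{k \geq 0}\|T^{n_k}\|$ and fixing unit eigenvectors $x_\lambda, x_\mu$ for $\lambda \neq \mu$ in $\T$, the decomposition
\[
\lambda^{n_k}x_\lambda - \mu^{n_k}x_\mu = \lambda^{n_k}(x_\lambda - x_\mu) + (\lambda^{n_k} - \mu^{n_k})x_\mu,
\]
combined with $\|\lambda^{n_k}x_\lambda - \mu^{n_k}x_\mu\| = \|T^{n_k}(x_\lambda - x_\mu)\| \leq M\|x_\lambda - x_\mu\|$, yields $|\lambda^{n_k} - \mu^{n_k}| \leq (M+1)\|x_\lambda - x_\mu\|$. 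Applying (2) to $\lambda\bar\mu \in \T\setminus\set{1}$ gives $\|x_\lambda - x_\mu\| \geq \varepsilon/(M+1)$, so the normalized unimodular eigenvectors form a uniformly discrete family in the separable space $X$, and $\sigma_p(T) \cap \T$ must therefore be at most countable.

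For (1) $\Rightarrow$ (2) I would proceed by contraposition, in two steps. First, assuming that for every $\delta > 0$ there exists $\lambda \in \T\setminus\set{1}$ with $\sup_k|\lambda^{n_k} - 1| < \delta$, I select inductively $\lambda_m \in \T\setminus\set{1}$ with $\sup_k|\lambda_m^{n_k} - 1| < 10^{-m}$ and with the additional genericity (easily arranged in the inductive selection) that the infinite products $\lambda_\eta := \prod_{m \geq 1}\lambda_m^{\eta_m}$ attached to distinct $\eta \in \set{0,1}^{\N}$ are distinct points of $\T$. These products converge since $n_0 = 1$ forces $|\lambda_m - 1| < 10^{-m}$, and the telescoping identity $\prod_m b_m - 1 = \sum_m (\prod_{j<m} b_j)(b_m - 1)$ applied to $b_m = \lambda_m^{\eta_m n_k}$ yields
\[
|\lambda_\eta^{n_k} - 1| \leq \sum_{m \geq 1}\eta_m|\lambda_m^{n_k} - 1| \leq 1/9
\]
uniformly in $k$. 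This produces an uncountable set $K \subset \T$ on which every $T^{n_k}$ must act as a near-identity.

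The second step is the principal obstacle: one must realize $K$ inside the unimodular point spectrum of a bounded operator $T$ on a \emph{separable} Banach space while keeping $\sup_{k \geq 0}\|T^{n_k}\| < \infty$. Separability rules out the naive $\ell^1(K)$ construction; the eigenvectors attached to distinct $\lambda \in K$ must be allowed to cluster near a common subspace as $\lambda$ approaches $1$. Following the template of Ransford--Roginskaya~\cite{RR} and its refinement in \cite{BaG1, BaG2}, one takes $X$ to be a carefully weighted sequence space and $T$ a controlled perturbation of a diagonal operator, with weights and perturbation tuned so that for each $\lambda \in K$ an explicit infinite series in the canonical basis solves $Tx = \lambda x$, and simultaneously the uniform bound $\sup_{\lambda \in K}\sup_k|\lambda^{n_k} - 1| \leq 1/9$ translates into a uniform norm bound on $T^{n_k}$. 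Setting up this coupling of the weights to the sequence $(n_k)$ is the technical heart of the argument; once it is done, convergence of the series defining each $x_\lambda$ and the norm estimate on $T^{n_k}$ follow from routine computations.
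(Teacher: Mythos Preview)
The paper does not actually prove Theorem~\ref{th0}; it is quoted from \cite{BaG2} as background. What the present paper proves is the sharper Hilbertian version (Theorem~\ref{th1bis}), which in particular re-derives the hard implication of Theorem~\ref{th0} with $X=\ell_2(\N)$. So the relevant comparison is between your sketch and the construction in Section~2.

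Your argument for (2)$\Rightarrow$(1) is correct and is exactly the argument the paper uses elsewhere (see the proof of $(2)\Rightarrow(1)$ in Theorem~\ref{th2}, Section~4.2): partial power-boundedness forces the normalized unimodular eigenvectors to be $\varepsilon/(M+1)$-separated, hence at most countably many by separability.

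For the contrapositive of (1)$\Rightarrow$(2), your first step is sound and parallels Lemma~\ref{lem5}: the infinite products $\lambda_\eta=\prod_m \lambda_m^{\eta_m}$ give a Cantor-type set $K$ on which $\sup_k|\lambda^{n_k}-1|$ is uniformly small; the paper phrases this as the metric space $(K,d_{(n_p)})$ being separable. Your second step, however, is not a proof but a pointer to \cite{RR,BaG1,BaG2}. The paper's contribution is precisely to carry this step out explicitly, and the construction is more delicate than your description suggests: one takes $T=D+B$ on $\ell_2$ with $D$ diagonal in the $\lambda_n$'s and $B$ a weighted backward shift whose weights are calibrated via a surjection $j:\{2,3,\dots\}\to\N$ with $j(n)<n$; the eigenvectors are finite linear combinations $u^{(n)}$ of canonical basis vectors, and the bound $\sup_p\|T^{n_p}-D^{n_p}\|\le\delta$ requires an explicit closed form for the matrix entries $t^{(n)}_{k,l}$ (Lemmas~\ref{lem2} and~\ref{lem3}) together with the freedom to make each $d_{(n_p)}(\lambda_l,\lambda_{j(l)})$ arbitrarily small inside the perfect set $K$. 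The ``routine computations'' you allude to occupy the bulk of Section~2.3 and are the technical heart of the paper; without them your proposal for this implication is a plan rather than a proof.
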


Many examples of Jamison and non-Jamison sequences were obtained in \cite{BaG1} and \cite{BaG2}. Among the examples of non-Jamison sequences, let us mention the sequences $(n_{k})_{k\geq 0}$ such that $n_{k+1}/n_{k}$ tends to infinity, or such that $n_{k}$ divides $n_{k+1}$ for each $k\geq 0$ and $\limsup n_{k+1}/n_{k}=+\infty $. Saying that $(n_{k})_{k\geq 0}$ is not a Jamison sequence means that there exists a \sep\ Banach space $X$ and $T\in\mathcal{B}(X)$ such that $\sup_{k\geq
 0}||T^{n_{k}}||<+\infty$ and $\sigma _{p}(T)\cap\T$ is uncountable. But the space $X$ may well be extremely complicated: in the proof of Theorem \ref{th0}, the space is obtained by a rather involved renorming of a classical space such as $\ell_2$ for instance. This is a drawback in applications, and this is why it was investigated in \cite{BaG1} under which conditions on the sequence $(n_{k})_{k\geq 0}$ it was possible to construct partially power-bounded operators \wrt\  $(n_{k})_{k\geq 0}$ with uncountable unimodular point spectrum on a Hilbert space. It was proved in \cite{BaG1} that if the series $\sum_{k\geq 0}(n_{k}/n_{k+1})^2$ is convergent, there exists a bounded \op\ $T$ on a \sep\ Hilbert space $H$ such that $\sup_{k\geq
 0}||T^{n_{k}}||<+\infty$ and $\sigma _{p}(T)\cap\T$ is uncountable. But this left open the characterization of Hilbertian Jamison sequences.

\begin{definition}
We say that $(n_{k})_{k\geq 1}$ is a \emph{Hilbertian Jamison sequence}
if for any bounded
\op\ $T$ on
a \sep\ infinite-dimensional complex Hilbert space
which is partially power-bounded \wrt\ $(n_{k})$, $\sigma _{p}(T)\cap\T$ is at most countable.
\end{definition}

Obviously a Jamison sequence is a Hilbertian Jamison sequence.
Our first goal in this paper is to prove the somewhat surprising fact that the converse is true:

\begin{theorem}\label{th1}
Let $(n_{k})_{k\geq 0}$ be an increasing sequence of
integers. Then $(n_{k})_{k\geq 0}$ is a \hjs\ \ifff\
it is a \js.
\end{theorem}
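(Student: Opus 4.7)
The forward implication, that every Jamison sequence is a Hilbertian Jamison sequence, is immediate from the definitions, since separable Hilbert spaces are separable Banach spaces. The content of the theorem is therefore the converse; equivalently, I will argue contrapositively: if $(n_{k})_{k\geq 0}$ is not a Jamison sequence, I will produce a bounded operator $T$ on a separable infinite-dimensional complex Hilbert space $H$ with $\sup_{k\geq 0}\|T^{n_{k}}\|<+\infty$ and uncountable unimodular point spectrum.

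By Theorem \ref{th0}, the hypothesis that $(n_{k})$ is not a Jamison sequence means that for every $\varepsilon>0$ there exists $\lambda\in\T\setminus\{1\}$ with $\sup_{k\geq 0}|\lambda^{n_{k}}-1|<\varepsilon$. My first task is to promote this pointwise failure of condition (2) to the existence, for some fixed small $\varepsilon>0$, of a perfect (Cantor-type) set $K\subset\T$ such that $\sup_{k\geq 0}|\lambda^{n_{k}}-1|\leq\varepsilon$ holds \emph{uniformly} for $\lambda\in K$. This should follow by a compactness-plus-group-structure argument: the sets $\Lambda_{\varepsilon}=\{\lambda\in\T : \sup_{k}|\lambda^{n_{k}}-1|\leq\varepsilon\}$ are closed and symmetric, and once they are known to contain points of $\T\setminus\{1\}$ arbitrarily close to $1$, closure under products (with a controlled blow-up of $\varepsilon$) allows one to build a Cantor set inside them by a standard tree construction.

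The main obstacle is the second step: building the Hilbert space example, rather than the renormed Banach space example of \cite{BaG2}. The plan is to realise the candidate eigenvectors as a continuously parametrised family $(x_{\lambda})_{\lambda\in K}$ inside a separable Hilbert space $H$, chosen so that (i) each $x_{\lambda}$ is nonzero, (ii) the map $\lambda\mapsto x_{\lambda}$ is continuous and injective on $K$, and (iii) the densely defined map $T:x_{\lambda}\mapsto\lambda\,x_{\lambda}$ on $\vect\{x_{\lambda}:\lambda\in K\}$ extends to a bounded operator on $H$ whose iterates $T^{n_{k}}$ are uniformly bounded. Separability of $H$ will come from taking $H$ to be the closed linear span of the $x_{\lambda}$'s, which by (ii) equals the closed linear span of $\{x_{\lambda}:\lambda\in D\}$ for any countable dense $D\subset K$; the uncountability of $\sigma_{p}(T)\cap\T$ will then follow because each $\lambda\in K$ is an eigenvalue, witnessed by $x_{\lambda}\in H$.

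The delicate quantitative estimate, which I expect to be the technical heart of the argument, is the control of $\|T^{n_{k}}\sum_{j}c_{j}x_{\lambda_{j}}\|$ for arbitrary finite linear combinations, uniformly in $k$. Since $T^{n_{k}}\sum_{j}c_{j}x_{\lambda_{j}}=\sum_{j}c_{j}\lambda_{j}^{n_{k}}x_{\lambda_{j}}$, an inequality of the form
\[
\bigl\|\textstyle\sum_{j}c_{j}(\lambda_{j}^{n_{k}}-1)\,x_{\lambda_{j}}\bigr\|\;\leq\;C\varepsilon\,\bigl\|\sum_{j}c_{j}\,x_{\lambda_{j}}\bigr\|,
\]
derived from the uniform bound $|\lambda^{n_{k}}-1|\leq\varepsilon$ on $K$, would deliver $\sup_{k}\|T^{n_{k}}\|\leq 1+C\varepsilon$. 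Turning this heuristic into a genuine Hilbertian inequality, rather than a renormed Banach one as in \cite{BaG2}, is precisely the point where \cite{BaG1} needed the extra hypothesis $\sum_{k}(n_{k}/n_{k+1})^{2}<+\infty$; removing that hypothesis, presumably by a more careful Cantor-type choice of the parametrising family $(x_{\lambda})$ exploiting the fine arithmetic structure of $K$ furnished by the failure of (2), is the crux of the proof.
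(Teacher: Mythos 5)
Your proposal correctly disposes of the easy implication and correctly locates where the difficulty lies, but it does not contain the idea that actually resolves that difficulty: the final paragraph explicitly defers ``the crux of the proof'' (the uniform Hilbertian bound on $\|T^{n_k}\|$) to an unspecified ``more careful Cantor-type choice of the parametrising family $(x_\lambda)$''. As written, the plan is to define $T$ on $\vect\{x_\lambda : \lambda\in K\}$ by $Tx_\lambda=\lambda x_\lambda$ and to prove a multiplier inequality $\|\sum_j c_j(\lambda_j^{n_k}-1)x_{\lambda_j}\|\le C\varepsilon\|\sum_j c_j x_{\lambda_j}\|$ for all finite linear combinations. This is essentially the renorming strategy of \cite{BaG2}: there, the inequality is made true by \emph{defining} the norm as a suitable supremum over $k$, which is exactly what destroys the Hilbertian structure. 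In a separable Hilbert space the $x_\lambda$ for uncountably many $\lambda$ cannot be anything like an orthogonal (or unconditional) system, so there is no a priori reason such a multiplier bound should hold for any choice of eigenvector field, and you give no mechanism for producing one. The statement ``removing that hypothesis is the crux'' is an accurate description of the theorem, not a proof of it.

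The paper's proof avoids this impasse by not starting from an eigenvector field at all. It fixes the Hilbert space as $\ell_2(\N)$ and takes $T=D+B$, where $D$ is diagonal with unimodular entries $\lambda_n$ and $B$ is a weighted backward shift whose weights $\alpha_n=\omega_n|\lambda_{n+1}-\lambda_{j(n+1)}|/|\lambda_n-\lambda_{j(n)}|$ are tied to the increments of the $\lambda_n$'s along a tree structure $j(n)<n$. One then computes the matrix entries of $T^n-D^n$ explicitly (Lemmas \ref{lem2} and \ref{lem3}) and shows that each column of $T^{n_p}-D^{n_p}$ has $\ell_2$-norm at most $\delta^2 2^{-l}$, \emph{uniformly in $p$}, provided each $\lambda_l$ is chosen so that $d_{(n_p)}(\lambda_l,\lambda_{j(l)})=\sup_p|\lambda_l^{n_p}-\lambda_{j(l)}^{n_p}|$ is small enough. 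This is where the hypothesis enters, and in a slightly different form from the one you extract: what is needed is not merely a perfect set $K$ with $\sup_k|\lambda^{n_k}-1|\le\varepsilon$ uniformly on $K$, but a set $K'$ that is \emph{perfect for the metric} $d_{(n_p)}$, so that each $\lambda_l$ can be chosen with $d_{(n_p)}(\lambda_l,\lambda_{j(l)})$ as small as the construction demands at stage $l$. The eigenvalues of the resulting $T$ are the $\lambda_n$ themselves (only countably many explicit ones), and the uncountability of $\sigma_p(T)\cap\T$ is then deduced indirectly from the perfectly spanning criterion of Theorem \ref{th0bis}, using that the eigenvectors $u^{(n)}$ satisfy $\|u^{(n)}-u^{(j(n))}\|\le 2^{-n}$. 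None of this machinery -- the perturbation of a diagonal operator, the column-by-column estimate of $T^{n_p}-D^{n_p}$, or the indirect route to uncountable point spectrum -- appears in your outline, so the proposal has a genuine gap at precisely the step the theorem is about.
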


Contrary to the proofs of \cite{BaG1} and \cite{BaG2}, the proof of Theorem \ref{th1} is completely explicit: the operators with $\sup_{k\geq 0}||T^{n_{k}}||<+\infty $  and $\sigma  _{p}(T)\cap\T$ uncountable which we construct are perturbations by a weighted \bs\ on $\ell_2$ of a diagonal \op\ with unimodular diagonal coefficients. The construction here bears some similarities with a construction carried out in a different context in \cite{DFGP} in order to obtain \fhy\ \ops\ on certain Banach spaces.

\subsection{Ergodic theory and rigidity sequences}
Let $(X,\mathcal{F},\mu)$ be a finite \mea\ space where $\mu $ is a positive regular finite Borel \mea, and let $\varphi $ be a measurable transformation of $(X,\mathcal{F},\mu)$. We recall here that $\varphi$ is said to \emph{preserve the measure} $\mu $ if $\mu (\varphi^{-1}(A))=\mu (A)$ for any $A\in\mathcal{F}$, and that $\varphi$ is said to be \emph{ergodic} \wrt\ $\mu $ if for any $A,B \in\mathcal{F}$ with $\mu (A)>0$ and $\mu (B)>0$, there exists an $n\geq 0$ such that $\mu (\varphi^{-n}(A)\cap B)>0$, where $\varphi^{n}$ denotes the $n^{th}$ iterate of $\varphi$. Equivalently, $\varphi$ is \erg\ \wrt\ $\mu $ \ifff\
$$\frac{1}{N}\sum_{n=1}^{N}\mu (\varphi^{-n}(A)\cap B)\to \mu(A)\mu(B)\quad \textrm{ as } N\to +\infty \quad \textrm{for every } A,B\in \mathcal{F}.$$
This leads to the notion of \wmx\ \mea-preserving transformation of $(X,\mathcal{F},\mu)$: $\varphi$ is \emph{\wmx}\ if
$$\frac{1}{N}\sum_{n=1}^{N}|\mu (\varphi^{-n}(A)\cap
B)-\mu(A)\mu(B)|\to 0 \quad \textrm{ as } N\to +\infty
\quad \textrm{for every } A,B\in \mathcal{F}.$$ It is well-know that $\varphi$ is \wmx\ \ifff\ $\varphi\times
\varphi$ is an \erg\ transformation of $X\times X$ endowed with the
product \mea\ $\mu\times\mu$. We refer the reader to \cite{CFS},
\cite{Pet} or
\cite{Wa} for instance for more about \erg\ theory of dynamical systems and various examples.
\par\smallskip
Our interest in this paper lies in \wmx\ \emph{rigid} dynamical systems:
\begin{definition}
A \mpt\ of  $(X,\mathcal{F},\mu)$ is said to be \emph{rigid} if there exists a sequence $(n_{k})_{k\geq 0}$ of integers such that for any $A\in \mathcal{F}$, $\mu(\varphi^{-n_{k}}(A)\bigtriangleup A)\to 0$ as $k\to +\infty $.
\end{definition}
If $U_{\varphi}$ denotes the isometry on $L^{2}(X,\mathcal{F},\mu)$ defined by $U_{\varphi}f:=f\circ\varphi$ for any $f\in L^{2}(X,\mathcal{F},\mu)$, it is not difficult to see that $\varphi$ is rigid \wrt\ the sequence $(n_{k})_{k\geq 0}$ \ifff\
 $||U_{\varphi}^{n_{k}}f-f||\to 0$ as $k\to +\infty $ for any $f\in L^{2}(X,\mathcal{F},\mu)$. The function $f$ itself is said to be rigid \wrt\ $(n_{k})_{k\geq 0}$ if $||U_{\varphi}^{n_{k}}f-f||\to 0$. Rigid functions play
a major role in the study of mildly mixing dynamical systems, as introduced by Furstenberg and Weiss in \cite{FW}, and rigid weakly mixing systems are intensively studied, see for instance the works
 \cite{1}, \cite{2}, \cite{3} or \cite{4} as well as the references therein for some examples of results and methods. Let us just mention here the fact that \wmx\ rigid transformations
 of $(X,\mathcal{F},\mu)$ form a residual subset of the set of all \mpt s of $(X,\mathcal{F},\mu)$ for
the weak topology \cite{K}. A \emph{rigidity sequence} is defined as follows:

\begin{definition}
Let $(n_{k})_{k\geq 0}$  be a strictly increasing sequence of
positive integers.
We say that $(n_{k})_{k\geq 0}$ is a \emph{rigidity sequence} if there exists a \mea\ space $(X,\mathcal{F},\mu)$ and a \mpt\ $\varphi$ of $(X,\mathcal{F},\mu)$ which is \wmx\ and rigid \wrt\ $(n_{k})_{k\geq 0}$.
\end{definition}
\begin{remark}\label{rem:inv}
In the literature one often defines rigidity sequences as sequences
for which there exists an \emph{invertible} \mpt\ which is \wmx\ and
rigid \wrt\ $(n_{k})_{k\geq 0}$. In fact, these two definitions are
equivalent since every rigid \mpt\ $\varphi$ is invertible (in the measure-theoretic sense).
An easy way to see it is to consider the induced
isometry $U_\varphi$ defined above. Since $\varphi$ is invertible if
and only if $U_\varphi$ is so, it suffices to show that $U_\varphi$ is
invertible. By the decomposition theorem for contractions due to
Sz.-Nagy, Foia{\c{s}} \cite{NaFo}, $U_\varphi$ can be decomposed into a direct sum of a
unitary operator and a weakly stable operator. Since
$\lim_{k\to\infty}U_\varphi^{n_k}=I$ in the weak operator topology
(see Fact \ref{fact2} below), the weakly
stable part cannot be present and thus $U_\varphi$ is a unitary operator and
$\varphi$ is invertible.
\end{remark}

Rigidity sequences are in a sense already characterized: $(n_{k})_{k\geq 0}$ is a \rs\ \ifff\ there exists a continuous \proba\ \mea\ $\sigma  $ on the unit circle $\T$ such that
$$\int_{\T}|\lambda ^{n_{k}}-1|d\sigma  (\lambda )\to 0\quad \textrm{ as } k\to +\infty $$
(see Section 3.1 for more details). Still, there is a lack of practical conditions which would enable us to check easily whether a given sequence $(n_{k})_{k\geq 0}$ is a \rs. It is the second aim of this paper to provide such conditions. Some of them can be initially found in the papers \cite{BaG1} and \cite{BaG2} which study Jamison sequences in the Banach space setting, and they turn out to be relevant for the study of rigidity. We show for instance that if ${n_{k+1}}/{n_{k}}$ tends to infinity as $k$ tends to infinity, $(n_{k})_{k\geq 0}$ is a \rs\ (see Example \ref{ex1} and Proposition \ref{prop3}). If $(n_{k})_{k\geq 0}$ is any sequence such that $n_{k}$ divides $n_{k+1}$ for any $k\geq 0$, $(n_{k})_{k\geq 0}$ is a \rs\ (Propositions \ref{prop22} and \ref{prop33}). We also give some examples involving the
denominators of the partial quotients in the continuous fraction expansion of some irrational numbers (Examples \ref{ex66} and \ref{ex2}), as well as an example of a \rs\ such that $n_{k+1}/n_{k}\to 1$ (Example \ref{ex77}). In the other direction, it is not difficult to show that if $n_{k}=p(k)$ for some polynomial $p\in\Z[X]$ with $p(k)\geq 0$ for any $k$, $(n_{k})_{k\geq 0}$ cannot be a \rs\ (Example \ref{ex44}), or that the sequence of prime numbers cannot be a \rs\ (Example \ref{ex55}). Other examples of non-\rs s can be given (Example \ref{ex5}) when the sequences $(n_{k}x)_{k\geq 0}$, $x\in [0,1]$, have suitable equirepartition properties.

\subsection{Ergodic theory and rigidity for linear dynamical systems}
If $T$ is a bounded \op\ on a \sep\ Banach space $X$, it is sometimes possible
to endow the space $X$ with a suitable \proba\ \mea\ $m$, and to consider
$(X,\mathcal{B},m,T) $ as a measurable dynamical system. This was
first done in the seminal work \cite{Fl} of Flytzanis, and the study
was continued in the papers \cite{BG2} and \cite{BG3}. If $X$ is a
\sep\ complex Hilbert space which we denote by $H$, $T\in \mathcal{B}(H)$ admits a \nd\ \inv\ \ga\ \mea\ \ifff\ its \eve s associated to \eva s of modulus $1$ span a dense subspace of $H$, and it is \erg\ (or here, equivalently, \wmx)\ \wrt\ such a \mea\ \ifff\ it has a \ps\ (see Section 2.1 for the definitions) -- this condition very roughly means that $T$ has ``plenty'' of such \eve s, ``plenty'' being quantified by a continuous \proba\ \mea\ on the unit circle.
\par\smallskip
It comes as a natural question to describe \rs s in the framework of  linear dynamics, and it is not difficult to show that if $(n_{k})_{k\geq 0}$ is a \rs, there exists a bounded \op\ on $H$ which is \wmx\ and \rg\ \wrt\ $(n_{k})_{k\geq 0}$ (see Section 4.1).
Thus, every rigidity sequence can be realized in a linear Hilbertian
measure-preserving dynamical system.
However, the answer is not so simple when one considers topological and uniform rigidity, which are topological analogues of the (measurable) notion of rigidity. These notions were introduced by Glasner and Maon in the paper \cite{GM} for continuous dynamical systems on compact spaces:

\begin{definition}
 Let $(X,d)$ be a compact metric space, and let $\varphi$ be a continuous self-map of $X$. We say that $\varphi$ is \emph{topologically rigid} \wrt\ the sequence $(n_{k})_{k\geq 0}$ if $\varphi^{n_{k}}(x)\to x$ as $k\to +\infty $ for any $x\in X$, and that $\varphi$ is \emph{uniformly rigid} \wrt\ $(n_{k})_{k\geq 0}$ if $$\sup_{x\in X}d(\varphi^{n_{k}}(x),x)\to 0 \quad \textrm{ as } k\to +\infty .$$
\end{definition}

It is easy to check, using the Lebesgue dominated convergence theorem, that a topologically or uniformly rigid dynamical system is rigid.
Uniform rigidity is studied in \cite{GM}, where in particular uniformly \rg\ and topologically \wmx\ transformations are constructed, see also \cite{5}, \cite{6} and \cite{JKLSS} for instance.
Recall that $\varphi$ is said to be \emph{topologically \wmx}\ if for any non-empty open subsets $U_1,U_2,V_1,V_2$ of $X$, there exists an integer $n$ such that $\varphi^{-n}(U_1)\cap V_1$ and $\varphi^{-n}(U_2)\cap V_2$ are both non-empty (topological weak mixing is the topological analogue of the notion of measurable weak mixing). Uniform rigidity sequences are defined in \cite{JKLSS}:

\begin{definition}
Let $(n_{k})_{k\geq 0}$  be a strictly increasing sequence of integers. We say that $(n_{k})_{k\geq 0}$ is a \emph{uniform rigidity sequence} if there exists a compact dynamical system $(X,d,\varphi)$ with $\varphi$ a continuous self-map of $X$, which is topologically \wmx\ and uniformly rigid \wrt\ $(n_{k})_{k\geq 0}$.
\end{definition}

The question of characterizing uniform rigidity sequences
is still open, as well as the question  \cite{JKLSS}
whether there exists  a compact dynamical system $(X,d,\varphi)$ with $\varphi$ continuous, which would be both \wmx\ \wrt\ a certain $\varphi$-\inv\ \mea\ $\mu$ on $X$ and uniformly rigid.
\par\smallskip
We investigate these two questions in the framework of linear dynamics. Of course we have to adapt the definition of uniform rigidity to this setting, as a Banach space is never compact.

\begin{definition}
 Let $X$ be complex \sep\ Banach space, and let $\varphi$ be a continuous transformation of $X$. We say that $\varphi$ is \emph{uniformly rigid} \wrt\ $(n_{k})_{k\geq 0}$ if for any bounded subset $A$ of $X$, $$\sup_{x\in A}||\varphi^{n_{k}}(x)-x||\to 0 \quad \textrm{ as } k\to +\infty .$$
\end{definition}

When $T$ is a bounded linear \op\ on $X$, $T$ is \ur\ \wrt\ $(n_{k})_{k\geq 0}$ \ifff\ $||T^{n_{k}}-I||\to 0$ as $k\to +\infty $. We prove the following theorems:

\begin{theorem}\label{th2}
Let $(n_{k})_{k\geq 0}$  be an increasing sequence of integers with $n_{0}=1$. The following assertions are equivalent:
\begin{itemize}
 \item[(1)]  for any $\varepsilon >0$ there exists a $\lambda \in\T\setminus\{1\}$ such that $$
\sup_{k\geq 0}|\lambda ^{n_{k}}-1|\leq\varepsilon \quad \textrm{ and }\quad |\lambda ^{n_{k}}-1|\to 0
\quad \textrm{ as }\quad k\to +\infty ;$$

\item[(2)] there exists a bounded linear \op\ $T$ on a \sep\
  Banach space $X$ such that $\sigma_p(T)\cap \T$ is uncountable and
  $T^{n_k}x\to x$ as $k\to\infty$ for every $x\in X$;

\item[(3)]
there exists a bounded linear \op\ $T$ on a \sep\ Hilbert space $H$ such that $T$ admits a \nd\ \inv\ \ga\ \mea\ \wrt\ which $T$ is \wmx\ and $T^{n_{k}}x \to x$ as $k \to +\infty $ for every $x\in H$, i.e. $T$ is topologically \rg\ \wrt\ $(n_{k})_{k\geq 0}$.
\end{itemize}
\end{theorem}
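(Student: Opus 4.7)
My plan is to establish the cycle $(1)\Rightarrow(3)\Rightarrow(2)\Rightarrow(1)$. The implication $(3)\Rightarrow(2)$ is essentially free: weak mixing with respect to a nondegenerate invariant Gaussian measure forces $T$ to admit a perfectly spanning set of eigenvectors associated to unimodular eigenvalues, so $\sigma_p(T)\cap\T$ supports a continuous probability measure and is in particular uncountable, while topological rigidity and the choice $X=H$ transfer unchanged.

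For $(2)\Rightarrow(1)$, I first invoke the uniform boundedness principle to obtain $M:=\sup_k\|T^{n_k}\|<\infty$, and each unit eigenvector $x_\mu$ of $\mu\in E:=\sigma_p(T)\cap\T$ then yields $\mu^{n_k}\to 1$. Fix $\varepsilon>0$ and stratify $E=\bigcup_{N\geq 0}E_N$ with
$$E_N=\{\mu\in E:\,|\mu^{n_k}-1|\leq\varepsilon/4\text{ for every }k\geq N\}.$$
Some $E_{N_0}$ is uncountable, and on it I consider the map $\Phi:E_{N_0}\to\T^{N_0}$ given by $\Phi(\mu)=(\mu^{n_0},\ldots,\mu^{n_{N_0-1}})$. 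If some fiber of $\Phi$ contains two distinct points $\mu_1\ne\mu_2$, then $\lambda:=\mu_1\overline{\mu_2}\ne 1$ satisfies $\lambda^{n_k}=1$ for $k<N_0$; otherwise $\Phi$ is injective, $\Phi(E_{N_0})$ is uncountable in $\T^{N_0}$ and therefore has a condensation point, yielding distinct $\mu_1,\mu_2\in E_{N_0}$ with $|\mu_1^{n_k}-\mu_2^{n_k}|\leq\varepsilon/2$ for $k<N_0$. In either case, the identity $|\lambda^{n_k}-1|=|\mu_1^{n_k}-\mu_2^{n_k}|$, combined with the defining inequality of $E_{N_0}$ for $k\geq N_0$, gives $\sup_k|\lambda^{n_k}-1|\leq\varepsilon/2$; finally $\lambda^{n_k}=\mu_1^{n_k}\overline{\mu_2^{n_k}}\to 1$, which is exactly (1).

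The harder step is $(1)\Rightarrow(3)$, which I would prove by a construction on $H=\ell^2(\N)$ closely parallel to that of Theorem \ref{th1}: a diagonal operator with prescribed unimodular diagonal, perturbed by a weighted backward shift. Using (1) I extract $(\alpha_j)_{j\geq 0}\subset\T\setminus\{1\}$ with $\delta_j:=\sup_k|\alpha_j^{n_k}-1|\to 0$ and $\alpha_j^{n_k}\to 1$ as $k\to\infty$ for each fixed $j$, and set $Te_j=\alpha_je_j+w_je_{j-1}$ with weights $(w_j)$ decaying fast enough that the recurrence $v_{j+1}(\lambda)=(\lambda-\alpha_j)v_j(\lambda)/w_{j+1}$ produces an $\ell^2$-eigenvector $u(\lambda)=\sum_j v_j(\lambda)e_j$ for every $\lambda$ in a prescribed arc around $1$, depending continuously on $\lambda$. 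Taking $\sigma$ to be normalised Lebesgue measure on that arc makes the family $(u(\lambda))_{\lambda\in\supp\sigma}$ perfectly spanning, so that $T$ is weakly mixing with respect to a nondegenerate invariant Gaussian measure and $\sigma_p(T)\cap\T$ is uncountable.

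The main obstacle is ensuring the full topological rigidity $T^{n_k}x\to x$ for every $x\in H$, and not merely the partial power-boundedness used in Theorem \ref{th1}. The lower-triangular operator $T^{n_k}-I$ has diagonal entries $\alpha_j^{n_k}-1$ converging to $0$ in $k$ by construction, but the off-diagonal entries are polynomials in $(\alpha_i,w_i)_{i\leq j}$ which must be shown to vanish pointwise in $\ell^2$, uniformly in $k$. This forces a sharper quantitative control of the weights than in Theorem \ref{th1}, through a block-by-block matching between the inputs $\delta_j$ from (1) and the chosen $w_j$, and this is where I expect the bulk of the technical work to concentrate.
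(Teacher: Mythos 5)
Your implications $(3)\Rightarrow(2)$ and $(2)\Rightarrow(1)$ are correct, and the second one is genuinely different from the paper's argument: the paper bounds $|\lambda^{n_k}-\mu^{n_k}|\leq(M+1)\,\|e_{\lambda}-e_{\mu}\|$ and deduces from the separability of $X$ that the unimodular eigenvalues cannot be uniformly separated in the metric $d_{(n_k)}$, whereas you work entirely on the eigenvalue set, stratifying it by the rate at which $\mu^{n_k}\to 1$ and extracting two close points via a condensation point in $\T^{N_0}$. Your route is more elementary (the uniform boundedness principle you invoke is never actually used afterwards), and both arguments are valid.

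The implication $(1)\Rightarrow(3)$, however, has a fatal flaw as sketched. You propose to choose the weights so that the eigenvector field $u(\lambda)$ lies in $\ell^2$ for \emph{every} $\lambda$ in an arc around $1$, and to take $\sigma$ to be normalised Lebesgue measure on that arc. But if every $\lambda$ in an arc $A$ is a unimodular eigenvalue of $T$ and $T^{n_k}x\to x$ for every $x$, then $\lambda^{n_k}\to 1$ for every $\lambda\in A$; by dominated convergence $\int_{A}\lambda^{n_k}\,dm(\lambda)\to m(A)>0$, contradicting the Riemann--Lebesgue lemma. Hence the set $\{\lambda\in\T : \lambda^{n_k}\to 1\}$ has Lebesgue measure zero and contains no arc: topological rigidity forces $\sigma_p(T)\cap\T$ to be an uncountable \emph{null} set. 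This is precisely why the paper first constructs (Lemma \ref{lem5}) a perfect compact Cantor-type set $K\subset\T$, separable for $d_{(n_k)}$, on which $\lambda^{n_k}\to 1$ pointwise, places the diagonal entries $\lambda_l$ inside $K$, and obtains perfect spanning from the criterion of Theorem \ref{th0bis} rather than from a measure absolutely continuous with respect to Lebesgue measure. Moreover, the step you defer (``sharper quantitative control of the weights'') is not merely quantitative: for fixed $(k,l)$ the matrix entry $t_{k,l}^{(n_p)}$ converges, as $n_p\to\infty$, to the finite sum $\sum_{j=k}^{l-1}c_j^{(k,l)}\bigl(\lambda_l^{1-(l-k)}-\lambda_j^{1-(l-k)}\bigr)/(\lambda_l-\lambda_j)$, and one must prove that this limit vanishes identically --- the algebraic identity of Lemma \ref{lem6}, proved by induction on $l$ --- a mechanism that no choice of weights supplies. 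As it stands, $(1)\Rightarrow(3)$ is not established.
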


We also have a characterization for uniform rigidity in the linear setting:
\begin{theorem}\label{th2bis}
Let $(n_{k})_{k\geq 0}$  be an increasing sequence of integers. The following assertions are equivalent:
\begin{itemize}
 \item[(1)]  there exists an uncountable subset $K $ of $\T$ such that $\lambda ^{n_{k}}$ tends to $1$ uniformly on $K$;

\item[(2)] there exists a bounded linear \op\ $T$ on a \sep\
  Banach space $X$ such that $\sigma_p(T)\cap \T$ is uncountable and
  $||T^{n_k}-I||\to 0$ as $k\to\infty$;

\item[(3)] there exists a bounded linear \op\ $T$ on a \sep\ Hilbert space $H$ such that $T$ admits a \nd\ \inv\ \ga\ \mea\ \wrt\ which $T$ is \wmx\ and $||T^{n_k}-I||\to 0$ as $k\to\infty$, i.e. $T$ is \ur\ \wrt\ $(n_{k})_{k\geq 0}$.
\end{itemize}
\end{theorem}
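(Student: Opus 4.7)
The plan is to prove the implications (3) $\Rightarrow$ (2) $\Rightarrow$ (1) $\Rightarrow$ (3). The first is immediate since a separable Hilbert space is a separable Banach space. For (2) $\Rightarrow$ (1), set $K := \sigma_p(T)\cap\T$, which is uncountable by hypothesis. For each $\lambda\in K$ fix a unit eigenvector $x_\lambda$; since $T^{n_k}x_\lambda = \lambda^{n_k} x_\lambda$,
$$|\lambda^{n_k}-1| \;=\; \|(T^{n_k}-I)x_\lambda\| \;\leq\; \|T^{n_k}-I\|,$$
uniformly in $\lambda\in K$, so $\lambda^{n_k}\to 1$ uniformly on $K$.

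The substantive implication is (1) $\Rightarrow$ (3). Here I would follow the construction announced just after Theorem \ref{th1}: take $H=\ell_{2}$ and form $T = D+B$, where $D$ is the diagonal operator $De_j = \lambda_j e_j$ with $(\lambda_j)_{j\geq 1}$ enumerating a countable dense subset of an uncountable compact perfect set $K_0 \subseteq K$, and $B$ is a weighted backward shift $Be_j = w_j e_{j-1}$ (with $Be_1 = 0$) whose weights $w_j > 0$ are chosen very small. The choice $\lambda_j \in K$ immediately gives
$$\|D^{n_k}-I\| \;=\; \sup_{j\geq 1}|\lambda_j^{n_k}-1| \;\longrightarrow\; 0$$
by the uniform convergence hypothesis in (1). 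Writing $T^{n_k} - I = (D^{n_k}-I) + R_{n_k}$, where $R_{n_k}$ collects the $2^{n_k}-1$ words of length $n_k$ in the alphabet $\{D,B\}$ containing at least one letter $B$, I would choose the weights $w_j$ small enough, depending on the sequence $(n_k)$, to force $\|R_{n_k}\|\to 0$ and hence $\|T^{n_k}-I\|\to 0$. Simultaneously, the quasi-explicit eigenvector formulas for operators of the form $D+B$, whose components involve products of the type $\prod_i w_i/(\mu - \lambda_i)$, show that uncountably many $\mu\in K_0$ admit genuine $\ell_{2}$-eigenvectors of $T$, and the weights can be further calibrated so that the resulting family of eigenvectors is perfectly spanning with respect to a continuous probability measure supported on $K_0$. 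The characterization of weak mixing for linear dynamical systems recalled in the introduction then produces a nondegenerate invariant Gaussian measure with respect to which $T$ is weakly mixing.

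The main obstacle will be the simultaneous calibration of the weights $w_j$: small enough to dominate the combinatorial explosion of $2^{n_k}-1$ mixed words in $R_{n_k}$ uniformly in $k$, yet large enough (and structured enough) to preserve $\ell_{2}$-summability of the eigenvector candidates together with the perfect spanning property. This is the same delicate construction that already drives the proof of Theorem \ref{th1}; the single new input here is that restricting the diagonal entries to $K$ automatically upgrades the partial power-boundedness $\sup_k \|D^{n_k}\|<\infty$ of the diagonal part to the stronger estimate $\|D^{n_k}-I\|\to 0$, which, provided the shift perturbation is absorbed in the sense above, propagates to uniform rigidity of $T$.
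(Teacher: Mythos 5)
Your implications $(3)\Rightarrow(2)\Rightarrow(1)$ are correct and identical to the paper's. The gap is in $(1)\Rightarrow(3)$, specifically in your mechanism for killing the off-diagonal remainder $R_{n_k}=T^{n_k}-D^{n_k}$. You propose to fix the shift weights ``small enough, depending on $(n_k)$, to dominate the combinatorial explosion of $2^{n_k}-1$ mixed words''. This cannot work as stated: for a \emph{fixed} matrix entry, say the one in position $(k,l)$ with $m=l-k\geq 1$, the contribution of $T^{n_p}-D^{n_p}$ is (Lemma \ref{lem2}) a fixed weight product $\alpha_{l-1}\cdots\alpha_k$ times the sum $s^{(n_p)}_{k,l}=\sum_{j_k+\cdots+j_l=n_p-m}\lambda_k^{j_k}\cdots\lambda_l^{j_l}$, which has $\binom{n_p}{m}\sim n_p^{m}/m!$ unimodular terms. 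Since $n_p\to\infty$ while the weight product is a constant independent of $p$, no a priori choice of small weights forces even the individual entries of $T^{n_p}-D^{n_p}$ to tend to $0$, let alone the operator norm. What actually saves the day in the paper is algebraic cancellation: Lemma \ref{lem3} rewrites $s^{(n_p)}_{k,l}$ as a combination of divided differences $(\lambda_l^{N}-\lambda_j^{N})/(\lambda_l-\lambda_j)$, the dangerous numerators are bounded by $d_{(n_k)}(\lambda_l,\lambda_{j(l)})=\sup_p|\lambda_l^{n_p}-\lambda_{j(l)}^{n_p}|$, and Lemma \ref{lem6} shows the limit of $s^{(n_p)}_{k,l}$ vanishes identically. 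This forces the diagonal entries $\lambda_l$ to be chosen successively so that consecutive ones are close in the Jamison metric $d_{(n_k)}$, not merely in modulus; the paper secures this by first upgrading $K$ (via Fact \ref{fact0} and Lemma \ref{lem4}) to a perfect compact set $K'$ that is separable for $d_{(n_k)}$ and on which $\lambda^{n_k}\to 1$ still holds uniformly. Your proposal never mentions this metric, and ``the single new input'' you identify (uniform convergence gives $\|D^{n_k}-I\|\to 0$) is only half of what hypothesis $(1)$ is used for.

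Two smaller points. First, $K$ in $(1)$ is just an uncountable set; you need to pass to a compact perfect subset of its closure (where uniform convergence persists by continuity) before any of this starts, and to apply Fact \ref{fact0} to guarantee points of $K'$ that are $d_{(n_k)}$-close to $1$. Second, your route to perfect spanning — exhibiting genuine $\ell_2$-eigenvectors for uncountably many $\mu\in K_0$ via convergence of infinite products — is not what the construction delivers and is harder than necessary: the eigenvectors $u^{(n)}$ of $T=D+B$ are \emph{finitely supported} (the recursion $x_{k+1}=\frac{\lambda-\lambda_k}{\alpha_k}x_k$ terminates at $\lambda=\lambda_n$), the point spectrum one controls directly is only the countable set $\{\lambda_n\}$, and perfect spanning is obtained from the accumulation criterion of Theorem \ref{th0bis} ($\|u^{(n)}-u^{(j(n))}\|\leq 2^{-n}$ with $j$ taking every value infinitely often), which is what Lemma \ref{lem1} is for. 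So the architecture of your proof is right and matches the paper's, but the two load-bearing steps — the $d_{(n_k)}$-separable perfect set and the cancellation identities replacing brute-force weight smallness — are missing.
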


In particular we get a positive answer to a question of \cite{JKLSS} in the context of linear dynamics:

\begin{corollary}\label{cor}
 Any sequence $(n_{k})_{k\geq 0}$ such that $n_{k+1}/n_{k}$ tends to infinity, or such that $n_{k}$ divides $n_{k+1}$ for each $k$ and $\limsup
n_{k+1}/n_{k}=+\infty $ is a uniform rigidity sequence for linear dynamical systems, and measure-theoretically \wmx\ \ur\ systems do exist in this setting.
\end{corollary}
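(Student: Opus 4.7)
The plan is to apply Theorem~\ref{th2bis}: it suffices, under either hypothesis, to exhibit an uncountable $K\subset\T$ on which $\lambda^{n_{k}}$ tends to $1$ uniformly. The second assertion, about the existence of a measure-theoretically \wmx\ \ur\ linear dynamical system on a \sep\ Hilbert space, then follows automatically from the implication (1)$\Rightarrow$(3) in Theorem~\ref{th2bis}. The two hypotheses are handled by rather different constructions.

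For the first hypothesis $n_{k+1}/n_{k}\to+\infty$, I would build $K$ as a Cantor-type set. Choose $\varepsilon_{k}\downarrow 0$ with $\varepsilon_{k}n_{k+1}/n_{k}\to+\infty$, which is possible by assumption. Each set $A_{k}:=\{\lambda\in\T:|\lambda^{n_{k}}-1|\leq\varepsilon_{k}\}$ is a disjoint union of $n_{k}$ closed arcs of length comparable to $\varepsilon_{k}/n_{k}$, centered at the $n_{k}$-th roots of unity. Starting from a single component of $A_{0}$, I would check inductively that each arc component of $A_{0}\cap\cdots\cap A_{k}$ contains at least two disjoint arc components of $A_{0}\cap\cdots\cap A_{k+1}$: indeed the component length $\sim\varepsilon_{k}/n_{k}$ is, by choice of $\varepsilon_{k}$, much larger than the spacing $\sim 1/n_{k+1}$ between consecutive centers at stage $k+1$. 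The usual nested intersection argument then yields a perfect compact set $K\subset\bigcap_{k}A_{k}$, automatically uncountable, with $\sup_{\lambda\in K}|\lambda^{n_{k}}-1|\leq\varepsilon_{k}\to 0$.

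For the second hypothesis, $n_{k}\mid n_{k+1}$ and $\limsup n_{k+1}/n_{k}=+\infty$, I would use divisibility to replace the analysis of arcs by an arithmetic construction. Extract a subsequence $(k_{l})_{l\geq 0}$ with $M_{l}:=n_{k_{l}+1}/n_{k_{l}}\geq 2^{l}$, so that $(n_{k_{l}+1})$ grows super-geometrically, and set
$$K=\Bigl\{\exp\bigl(2\pi i\textstyle\sum_{l\geq 0}\varepsilon_{l}/n_{k_{l}+1}\bigr):(\varepsilon_{l})\in\{0,1\}^{\N}\Bigr\}.$$
For any index $j$, let $m$ be maximal with $k_{m}+1\leq j$. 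Divisibility implies $n_{k_{l}+1}\mid n_{j}$ for every $l\leq m$, so the terms indexed by $l\leq m$ in $n_{j}\sum_{l}\varepsilon_{l}/n_{k_{l}+1}$ are integers, while the tail $n_{j}\sum_{l>m}1/n_{k_{l}+1}$ tends to $0$ uniformly in $(\varepsilon_{l})$ thanks to the super-geometric growth of $n_{k_{l}+1}$ relative to $n_{j}\leq n_{k_{m+1}}$. The same fast growth also ensures that distinct binary sequences $(\varepsilon_{l})$ yield distinct values of $\sum_{l}\varepsilon_{l}/n_{k_{l}+1}$, so $K$ is in bijection with $\{0,1\}^{\N}$ and is therefore uncountable.

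The main delicate point in both constructions is to tune the free parameters ($\varepsilon_{k}$ in the first case, the subsequence $(k_{l})$ and the growth rate of $(M_{l})$ in the second) carefully enough that uncountability and uniform convergence hold simultaneously; the growth hypothesis on $n_{k+1}/n_{k}$ supplies exactly the slack required to make both constraints compatible.
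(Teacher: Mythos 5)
Your proof is correct and follows essentially the same route as the paper: the paper's own argument simply invokes Theorem~\ref{th2bis}(1) together with Propositions~\ref{prop3} and~\ref{prop22}, whose proofs are precisely your two constructions (a nested-arcs Cantor set around roots of unity when $n_{k+1}/n_{k}\to+\infty$, and the set of points $\exp(2\pi i\sum\varepsilon_{l}/n_{k_{l}})$ along a subsequence of large jumps in the divisibility case). The only cosmetic difference is that you rebuild these sets from scratch rather than citing the earlier propositions; just take care, in the first construction, to select at each stage full arc components of $A_{k+1}$ inside the previously selected full arcs of $A_{k}$ (as in Proposition~\ref{prop3}) rather than arbitrary components of the running intersection, which could be short slivers.
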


\par\bigskip

After this paper was submitted for publication,  V. Bergelson, A. Del Junco, M. Lema\'nczyk and J. Rosenblatt sent us a preprint ``Rigidity and non-recurrence along sequences'' \cite{preprint}, in which they independently investigated for which sequences there exists a \wmx\ transformation which is \rg\ \wrt\ this sequence.
A substantial part of the results of Section 3 of the present paper is also obtained in \cite{preprint}, often with different methods. We are very grateful to V. Bergelson, A. Del Junco, M. Lema\'nczyk and J. Rosenblatt for making their preprint available to us.

\section{Hilbertian Jamison sequences}
Our aim in this section is to prove Theorem \ref{th1}. Clearly, if $(n_{k})_{k\geq 0}$ is a \js, it is automatically a \hjs, and the difficulty lies in the converse direction: using Theorem \ref{th0}, we start from a sequence $(n_{k})_{k\geq 0}$ such that for any $\varepsilon >0$ there is a $\lambda \in\T\setminus\{1\}$ such that $\sup_{k\geq 0}|\lambda ^{n_{k}}-1|\leq \varepsilon $, and we have to construct out of this  a bounded \op\ on a \emph{Hilbert space} which is partially power-bounded \wrt\ $(n_{k})_{k\geq 0}$ and which has uncountably many \eva s on the unit circle. We are going to prove a stronger theorem, giving a more precise description of the \eve s of the \op:

\begin{theorem}\label{th1bis}
Let $(n_{k})_{k\geq 0}$ be an increasing sequence of
integers with $n_{0}=1$ such that for any $\varepsilon>0$ there exists a $\lambda \in\T\setminus \set{1}$ such that $$\sup_{k\geq 0}|\lambda ^{n_{k}}-1|
\leq \varepsilon .$$ Let $\delta>0$ be any positive number. There exists a bounded linear \op\ $T$ on the Hilbert space $\ell_2(\N)$  such that $T$ has perfectly spanning unimodular \eve s and $$\sup_{k\geq 0}||T^{n_{k}}||\leq 1+\delta.$$
In particular the unimodular point spectrum of $T$ is uncountable.
\end{theorem}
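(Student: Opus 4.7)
The plan is to construct an operator $T$ on $\ell_2(\N)$ of the form $T = D + B$, where $D$ is the diagonal operator with unimodular entries $(\lambda_n)_{n \geq 0}$ and $B$ is a weighted backward shift with $Be_n = w_ne_{n-1}$ and $Be_0 = 0$; equivalently, $Te_n = \lambda_n e_n + w_n e_{n-1}$. Since $|(\lambda\mu)^{n_k}-1|\leq|\lambda^{n_k}-1|+|\mu^{n_k}-1|$, the sets $K_\varepsilon := \{\lambda \in \T : \sup_k|\lambda^{n_k}-1| \leq \varepsilon\}$ form a nested family of closed near-subgroups of $\T$, each non-trivial by hypothesis, so iterated application of the hypothesis produces a countable family $(\mu_{j,\alpha})_{j\geq 1,\,\alpha\in A_j}$ of unimodular numbers satisfying $\sup_k|\mu_{j,\alpha}^{n_k}-1|\leq\eta_j$ for a rapidly decreasing sequence $\eta_j\searrow 0$, arranged along a tree so that the set $K$ of accumulation points of $\{\mu_{j,\alpha}\}$ is an uncountable perfect subset of $\T$. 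The diagonal sequence $(\lambda_n)$ is then obtained by enumerating $\{\mu_{j,\alpha}\}_{\alpha\in A_j}$ along a block $I_j\subset\N$.

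The eigenvalue equation $Tx = \mu x$ with $x = \sum_n x_n e_n$ reduces to the recurrence $x_{n+1} = (\mu-\lambda_n)w_{n+1}^{-1}x_n$, yielding the explicit eigenvector field
$$x(\mu) = \sum_{n\geq 0}\left(\prod_{k=0}^{n-1}\frac{\mu-\lambda_k}{w_{k+1}}\right) e_n.$$
With the weights $(w_n)$ chosen blockwise of order $w_n\asymp r_j\cdot c_j$ on block $I_j$ (where $c_j$ is the minimal separation among $\{\mu_{j,\alpha}\}_\alpha$ and $r_j\to 0$ is an auxiliary decay factor), the series $\sum_n |x_n(\mu)|^2$ converges for every $\mu\in K$, and $\mu\mapsto x(\mu)$ is continuous on $K$. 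Equipping $K$ with a fully supported continuous probability measure $\sigma$ (which exists since $K$ is perfect), perfect spanning is obtained by a standard argument: since $x_n(\mu)$ is a polynomial of degree exactly $n$ in $\mu$, linear combinations of integrals of $x(\mu)$ against polynomial densities recover each $e_n$ inductively, which forces the closed linear span of $\{x(\mu):\mu\in K\setminus A\}$ to equal all of $\ell_2$ for every Borel $A\subset K$ with $\sigma(A)=0$.

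The main obstacle is bounding $\sup_N\|T^N\|\leq 1+\delta$, in particular for $N\in\{n_k\}$. Exploiting the upper-bidiagonal structure of $T$, one establishes the closed formula
$$(T^N)_{j,n} = (w_{j+1}w_{j+2}\cdots w_n)\,h_{N-(n-j)}(\lambda_j,\lambda_{j+1},\ldots,\lambda_n),\qquad 0\leq j\leq n,$$
with $h_m$ the $m$-th complete homogeneous symmetric polynomial. When the $\lambda_\ell$'s in $[j,n]$ are distinct, partial fractions give
$$h_{N-(n-j)}(\lambda_j,\ldots,\lambda_n) = \sum_{i=j}^{n}\frac{\lambda_i^{N}}{\prod_{\ell\in[j,n],\,\ell\neq i}(\lambda_i-\lambda_\ell)},$$
whose modulus is bounded \emph{independently of} $N$ by $C_{j,n}:=\sum_{i=j}^{n}\prod_{\ell\neq i}|\lambda_i-\lambda_\ell|^{-1}$, since $|\lambda_i^N|=1$. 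Splitting $T^N = D^N + E^N$ with $D^N$ diagonal unitary of norm $1$, the Hilbert-Schmidt estimate
$$\|E^N\|^2 \leq \sum_{n\geq 1}\sum_{j=0}^{n-1}(w_{j+1}\cdots w_n)^2\,C_{j,n}^2$$
is uniform in $N$. The core tension is that the eigenvector construction demands $\lambda_n$'s coming close to each $\mu\in K$, whereas controlling $C_{j,n}$ demands the $\lambda_n$'s to be well separated inside each block; the block-tree structure of the first paragraph, in the spirit of \cite{DFGP}, resolves this tension by allowing one to tune the parameters $(|A_j|,c_j,r_j)$ so that the above Hilbert-Schmidt sum is less than $\delta^2$, yielding $\|T^N\|\leq 1+\delta$ for all $N\geq 0$ and in particular along $(n_k)$.
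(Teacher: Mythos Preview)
Your proposal contains a fatal gap that can be seen without checking any details: you claim that your estimate yields $\|T^N\|\leq 1+\delta$ for \emph{all} $N\geq 0$, and simultaneously that $T$ has uncountable unimodular point spectrum. These two statements are incompatible by Jamison's original theorem (a power-bounded operator on a separable Banach space has at most countable unimodular point spectrum). So the parameters $(|A_j|,c_j,r_j)$ you allude to simply cannot be tuned to achieve both goals at once; one of your two claimed properties must fail.

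The place where the argument breaks is the norm estimate. Your bound $|h_{N-(n-j)}(\lambda_j,\ldots,\lambda_n)|\leq C_{j,n}$ via $|\lambda_i^N|=1$ discards all information about $N$, and in particular never uses the hypothesis on $(n_k)$. But controlling $\sum_{n}\sum_{j<n}(w_{j+1}\cdots w_n)^2 C_{j,n}^2$ forces the weights to be small relative to the inverse separations of the $\lambda_\ell$'s, and this is precisely what prevents the eigenvector series $x(\mu)$ from converging for uncountably many $\mu$. The paper resolves this by a genuinely different mechanism: it organizes the partial-fraction expression for the symmetric polynomial so that each term contains a \emph{difference} $\lambda_l^{n_p}-\lambda_j^{n_p}$, introduces a map $j:\{2,3,\ldots\}\to\N$ with $j(l)<l$ taking every value infinitely often, and then chooses each $\lambda_l$ close to $\lambda_{j(l)}$ in the metric $d_{(n_k)}(\lambda,\mu)=\sup_k|\lambda^{n_k}-\mu^{n_k}|$ (which is where the non-Jamison hypothesis enters, guaranteeing a perfect set on which this metric is separable). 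A matching weight structure $\alpha_{l-1}\cdots\alpha_k=\omega_{l-1}\cdots\omega_k\,|\lambda_l-\lambda_{j(l)}|/|\lambda_k-\lambda_{j(k)}|$ then puts the small factor $|\lambda_l-\lambda_{j(l)}|$ in front of every matrix entry, killing all terms except the one with $j=j(l)$, and that remaining term is controlled by $d_{(n_k)}(\lambda_l,\lambda_{j(l)})$ --- small along $(n_k)$ but not for general $N$. This is the missing idea in your sketch.
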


Before embarking on the proof, we need to define precisely the notion of perfectly spanning unimodular \eve s and explain its relevance here.

\subsection{A criterion for ergodicity of linear dynamical systems}
Let $H$ be a complex \sep\ infinite-dimensional Hilbert space.

\begin{definition}
We say that a bounded linear operator $T$ on $H$ has a \emph{\ps}\ if
there exists a continuous probability \mea\ $\sigma  $ on the unit
circle $\T$ such that for any
Borel
subset $B$ of $\T$ with $\sigma  (B)=1$, we have $\overline{\textrm{sp}}[\ker(T-\lambda I) \textrm{ ; }\lambda \in B]=H$.
\end{definition}

When $T\in \mathcal{B}(H)$ has a \ps, there exists a \ga\ \proba\ \mea\ $m$ on $H$ such that:

-- $m$ is $T$-\inv;

-- $m$ is \nd, i.e. $m(U)>0$ for any non-empty open subset $U$ of $H$;

-- $T$ is \wmx\ \wrt\ $m$.

See \cite{BG3} for extensions to the Banach space setting, and the book \cite[Ch. 5]{BM}. In the Hilbert space case, the converse of the assertion above is also true: if $T\in \mathcal{B}(H)$ defines a \wmx\ \mpt\ \wrt\ a \nd\ \ga\ \mea, $T$ has perfectly spanning unimodular \eve s.
\par\smallskip
A way to check this spanning property of the \eve s is to
use the following criterion, which was proved in \cite[Th. 4.2]{G}:

\begin{theorem}\label{th0bis}
Let $X$ be a complex \sep\ infinite-dimensional Banach space, and let $T$ be a bounded \op\ on $X$. Suppose that there exists a sequence $(u_{i})_{i\geq 1}$ of vectors of $X$ having the following properties:
 \begin{itemize}
  \item[(i)] for each $i\geq 1$, $u_{i}$ is an \eve\ of $T$ associated
    to an \eva\ $\mu _{i}$ of $T$ where $|\mu _{i}|=1$ and the
    $\mu_{i}$'s are all distinct;

  \item[(ii)] $\textrm{sp}[u_{i}\textrm{ ; } i \geq 1]$ is dense in $X$;

  \item[(iii)] for any $i\geq 1$ and any $\varepsilon >0$, there exists an $n\not =i $ such that $||u_{n}-u_{i}||<\varepsilon $.
\end{itemize}
Then
$T$ has a \ps.

\end{theorem}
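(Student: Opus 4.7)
My plan is to build, by a Cantor-type induction, a dyadic tree of \eva s and \eve s of $T$ and then to use the standard Bernoulli measure on $\{0,1\}^{\N}$ to push forward a continuous probability measure $\sigma $ on $\T$ which will witness the perfect spanning. The initial observation is that hypothesis (iii) has automatic consequences on the \eva s: if $u_{n}\to u_{i}$ in norm with $u_{i}\neq 0$, then $Tu_{n}=\mu _{n}u_{n}\to \mu _{i}u_{i}$, so $\mu _{n}\to \mu _{i}$. Hence the set $M=\{\mu _{i}:i\geq 1\}$ is dense-in-itself in $\T$ and its closure $K$ is a perfect compact set.

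Building on this, for every finite binary word $s\in \{0,1\}^{<\omega }$ I would construct a closed arc $I_{s}\subseteq \T$, an index $i(s)\geq 1$, an \eva\ $\nu _{s}=\mu _{i(s)}\in I_{s}$ and an \eve\ $v_{s}=u_{i(s)}$, in such a way that $I_{s0}$ and $I_{s1}$ are disjoint sub-arcs of $I_{s}$ of arclength at most $2^{-|s|}$ and $\|v_{s\varepsilon }-v_{s}\|\leq 2^{-|s|}$ for $\varepsilon \in \{0,1\}$. At each node, (iii) and the observation above ensure that one can find two new indices $n\neq i(s)$ with $u_{n}$ arbitrarily close to $v_{s}$ and with $\mu _{n}$ falling into prescribed distinct small arcs on either side of $\nu _{s}$. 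The induction is scheduled so as to force every $u_{i}$ to appear as some $v_{s}$: at stage $i$, one reserves a branch along which one ``plants'' $u_{i}$, which is harmless because at every node there is still freedom to perturb the choice.

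Once the tree is built, the limit $\lambda _{\omega }:=\lim _{n}\nu _{\omega |_{n}}$ exists by the shrinking diameters, giving a continuous map $\Phi :\{0,1\}^{\N}\to \T$, and the limit $v_{\omega }:=\lim _{n}v_{\omega |_{n}}$ exists by the Cauchy condition and is an \eve\ of $T$ with \eva\ $\lambda _{\omega }$. I would let $\sigma $ be the push-forward of the symmetric Bernoulli measure on $\{0,1\}^{\N}$ by $\Phi $; since each point of $K$ has at most countably many preimages under $\Phi $, the measure $\sigma $ is continuous. For any Borel set $B\subseteq \T$ with $\sigma (B)=1$, the preimage $\Omega _{B}=\Phi ^{-1}(B)$ has full Bernoulli measure and hence meets every cylinder $[s]$. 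Choosing $\omega \in [s]\cap \Omega _{B}$ provides an \eve\ $v_{\omega }\in \ker (T-\lambda _{\omega }I)$ with $\lambda _{\omega }\in B$ and $\|v_{\omega }-v_{s}\|\leq 2^{-|s|+1}$. Since the family $\{v_{s}\}$ contains every $u_{i}$ and hence spans a dense subspace by (ii), the closed linear span of $\bigcup _{\lambda \in B}\ker (T-\lambda I)$ equals $X$, which is exactly the required perfect spanning property.

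The principal difficulty lies in the simultaneous control demanded by the inductive step: at each node one must select two new indices $n\neq i(s)$ whose \eve s $u_{n}$ are very close to $v_{s}$ \emph{and} whose \eva s $\mu _{n}$ fall into prescribed, pairwise disjoint small arcs, while also accommodating the scheduled insertion of the next $u_{i}$. It is exactly the combination of (iii) with the automatic accumulation $\mu _{n}\to \mu _{i}$ derived from (i)--(iii) that makes the right indices available, and verifying that enough room remains at every node to preserve both the metric closeness of the \eve s and the spatial disjointness of the \eva\ arcs is the technical core of the proof.
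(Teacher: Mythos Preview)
The paper does not give its own proof of this theorem; it simply quotes it from \cite[Th.~4.2]{G}. So there is no ``paper's proof'' to compare with, and I can only assess your sketch on its own merits.

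Your overall strategy --- a Cantor-type tree of eigenvectors/eigenvalues, push forward the Bernoulli measure to get a continuous $\sigma$, then use that a full-measure set meets every cylinder --- is the right one, and most of the pieces are fine: the implication $u_n\to u_i\Rightarrow \mu_n\to\mu_i$, the convergence of $v_\omega$ along each branch, the injectivity of $\Phi$ (hence continuity of $\sigma$), and the final density argument are all correct. One cosmetic point: the phrase ``on either side of $\nu_s$'' is too strong; all you actually need, and all that (iii) gives you, is two distinct $\mu_n$'s near $\nu_s$, which you then place in disjoint sub-arcs.

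There is, however, a genuine gap in your ``planting'' step. Hypothesis (iii) says only that each $u_i$ is accumulated by other $u_n$'s; it gives no connectivity between different $u_i$'s. Take for instance eigenvectors $u_{2k+1}$ clustering near some vector $a$ and $u_{2k}$ clustering near some vector $b$ with $\|a-b\|=1$: (i)--(iii) can all hold, yet a single binary tree rooted at $u_1$ with the constraint $\|v_{s\varepsilon}-v_s\|\le 2^{-|s|}$ can never reach any $u_{2k}$, so ``at stage $i$ one reserves a branch and plants $u_i$'' cannot be carried out as stated. The clean fix is not to insist on one tree: build, for each $i\ge 1$, a separate binary tree rooted at $u_i$ exactly as you describe (this uses only (iii) at the node $u_i$ and its descendants), obtain a continuous measure $\sigma_i$ from each, and set $\sigma=\sum_{i\ge 1}2^{-i}\sigma_i$. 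This $\sigma$ is still a continuous probability measure, and if $\sigma(B)=1$ then $\sigma_i(B)=1$ for every $i$, so your cylinder argument applied in the $i$-th tree produces eigenvectors with eigenvalue in $B$ arbitrarily close to $u_i$; density then follows from (ii). With this modification your argument goes through.
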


\subsection{Proof of Theorem \ref{th1bis}: the easy part}
We are first going to define the \op\ $T$, and show that it is bounded. We will then describe the unimodular \eve s of $T$, and show that $T$ satisfies the assumptions of Theorem \ref{th0bis}.
\par\smallskip
$\blacktriangleright$ \textbf{Construction of the \op\ $T$.} Let $(e_n)_{n\geq 1}$ denote the canonical basis of the space $\ell_2(\N)$ of complex square summable sequences. We denote by $H$  the space $\ell_{2}(\N)$. The construction depends on two sequences $(\lambda _{n})_{n\geq 1}$ and $(\omega _{n})_{n\geq 1}$ which will be suitably chosen  further on in the proof:
$(\lambda _{n})_{n\geq 1}$ is a sequence of unimodular complex numbers which are all distinct, and $(\omega _{n})_{n\geq 1}$ is a sequence of positive weights.
\par\smallskip
Let $j:\{2,+\infty \}\to \{1,+\infty \}$ be a function having the following two properties:

$\bullet$ for any $n\geq 2$, $j(n)<n$;

$\bullet$ for any $k\geq 1$, the set $\{n\geq 2 \textrm{ ; } j(n)=k\}$ is infinite (i.e. $j$ takes every value $k$ infinitely often).
\par\smallskip
Let $D$ be the diagonal \op\ on $H$ defined by $De_{n}=\lambda _{n}e_{n}$ for $n\geq 1$, and let $B$ be the weighted \bs\ defined by $Be_{1}=0$ and $Be_{n}=\alpha _{n-1}e_{n-1}$ for $n\geq 2$, where the weights $\alpha _{n}$, $n\geq 1$, are defined by
$$\alpha _{1}=\omega _{1}\,|\lambda _{2}-\lambda _{j(2)}|$$ and
$$\alpha _{n}=\omega _{n}\,\left|\frac{\lambda _{n+1}-\lambda _{j(n+1)}}{\lambda _{n}-\lambda _{j(n)}}\right|\quad \textrm{for any }n\geq 2.$$
This definition of $\alpha _{n}$ makes sense because $j(n)<n$, so that $\lambda _{n}\not =\lambda _{j(n)}$.
The \ops\ $D$ and $B$ being thus defined, we set $T=D+B$.
\par\medskip
$\blacktriangleright$ \textbf{Boundedness of the \op\ $T$.}
The first thing to do is to prove that
$T$ is indeed a bounded \op\ on $H$, provided some conditions on the $\lambda _{n}$'s and $\omega  _{n}$'s are imposed. The diagonal \op\ $D$ being obviously bounded, we have to figure out conditions for $B$ to be bounded. If $\gamma >0$ is fixed, we have $||B||\leq \gamma $ provided
$$\omega _{1}\,|\lambda _{2}-\lambda _{j(2)}|\leq \gamma
\quad \textrm{and} \quad\omega _{n-1}\,\left|\frac{\lambda _{n}-\lambda _{j(n)}}{\lambda _{n-1}-\lambda _{j(n-1)}}\right|\leq \gamma \quad \textrm{for any }n\geq 3.$$
If the weights $\omega _{n}>0$ are arbitrary, the $\lambda _{n}$'s can be chosen in such a way that these conditions are satisfied:

$\bullet$ $\omega _{1}>0$ is arbitrary, we take $\lambda _{1}=1$ for instance (we could start here from any $\lambda _{1}\in\T$);

$\bullet$ we have $j(2)=1$: take $\lambda _{2}$ such that $|\lambda _{2}-\lambda _{1}|\leq \gamma /\omega _{1}$ with $\lambda _{2}\not=\lambda _{1}$;

$\bullet$ take $\omega _{2}>0$ arbitrary;

$\bullet$ $j(3)\in\{1,2\}$: take $\lambda _{3}$ so close to $\lambda _{j(3)}$, $\lambda _{3}\not\in\{\lambda _{1},\lambda _{2}\}$, that $$|\lambda _{3}-\lambda _{j(3)}|\leq\frac{\gamma }{\omega _{2}}\,|\lambda _{2}-\lambda _{j(2)}|$$

$\bullet$ take $\omega _{3}>0$ arbitrary, etc.

Thus $||B||\leq \gamma $ provided $\lambda _{n}$ is so close to $\lambda _{j(n)}$ for every $n\geq 2$ that
$$|\lambda _{n}-\lambda _{j(n)}|\leq\frac{\gamma }{\omega _{n-1}}\,|\lambda _{n-1}-\lambda _{j(n-1)}|.$$
No condition on the $\omega _{n}$'s needs to be imposed there.
\par\medskip
$\blacktriangleright$ \textbf{Unimodular \eve s of the \op\ $T$.}
The algebraic equation $Tx=\lambda x$ with $x=\sum_{k\geq 1}x_{k}e_{k}$ is equivalent to the equations $\lambda _{k}x_{k}+\alpha _{k}x_{k+1}=\lambda x_{k}$, i.e.
$x_{k+1}=\frac{\lambda -\lambda _{k}}{\alpha _{k}}x_{k}$ for any $k\geq 1$, i.e.
$$x_{k}=\frac{(\lambda -\lambda _{k-1})\ldots (\lambda -\lambda _{1})}{\alpha _{k-1}\ldots\alpha _{1}}\,x_{1}.$$ Hence for any $n\geq 1$, the eigenspace $\ker(T-\lambda _{n})$ is $1$-dimensional and
$\ker(T-\lambda _{n})=\textrm{sp}[u^{(n)}]$, where
$$u^{(n)}=e_{1}+\sum_{k=2}^{n}\frac{(\lambda_{n} -\lambda _{k-1})\ldots (\lambda_{n} -\lambda _{1})}{\alpha _{k-1}\ldots\alpha _{1}}e_{k}.$$
Our aim is now to show the following lemma:
\begin{lemma}\label{lem1}
By choosing in a suitable way the coefficients $\omega _{n}$ and $\lambda _{n}$, it is possible to ensure that for any $n\geq 2$,
$$||u^{(n)}-u^{(j(n))}||\leq 2^{-n}$$ (the sequence $(2^{-n})_{n\geq 2}$ could be replaced by any sequence $(\gamma_{n})_{n \geq 2}$ going to zero with $n$).
\end{lemma}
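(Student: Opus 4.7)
The plan is to derive a closed form for the coordinates of $u^{(n)}$, identify which coefficients of $u^{(n)}-u^{(j(n))}$ tend to zero as $\lambda_{n}\to\lambda_{j(n)}$ and which do not, and arrange the inductive choice of the parameters so that the ``bad'' coefficient is killed by the weights $\omega_{n}$. First, a telescoping computation yields, for every $k\geq 2$,
$$\alpha_{1}\alpha_{2}\cdots\alpha_{k-1}=\omega_{1}\omega_{2}\cdots\omega_{k-1}\,|\lambda_{k}-\lambda_{j(k)}|,$$
so that the coefficient of $e_{k}$ in $u^{(n)}$ for $2\leq k\leq n$ reads
$$c_{k}^{(n)}=\frac{\prod_{i=1}^{k-1}(\lambda_{n}-\lambda_{i})}{\omega_{1}\cdots\omega_{k-1}\,|\lambda_{k}-\lambda_{j(k)}|}.$$

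Setting $m=j(n)$, I split $\|u^{(n)}-u^{(m)}\|^{2}$ into three pieces and estimate each. For $2\leq k\leq m$, once $\lambda_{1},\dots,\lambda_{n-1}$ and $\omega_{1},\dots,\omega_{n-2}$ are fixed, $c_{k}^{(n)}-c_{k}^{(m)}$ is a continuous function of $\lambda_{n}$ which vanishes at $\lambda_{n}=\lambda_{m}$. For $m+1\leq k\leq n-1$, the product $\prod_{i=1}^{k-1}(\lambda_{n}-\lambda_{i})$ contains the factor $(\lambda_{n}-\lambda_{m})$, so $|c_{k}^{(n)}|\leq |\lambda_{n}-\lambda_{m}|\cdot C_{k}$ with $C_{k}$ depending only on previously fixed data. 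The delicate term is $k=n$: here $|\lambda_{n}-\lambda_{j(n)}|$ sits in the denominator and cancels, in modulus, the factor $(\lambda_{n}-\lambda_{m})$ in the numerator, leaving
$$|c_{n}^{(n)}|=\frac{\prod_{i=1,\,i\neq j(n)}^{n-1}|\lambda_{n}-\lambda_{i}|}{\omega_{1}\cdots\omega_{n-1}}\leq\frac{2^{n-1}}{\omega_{1}\cdots\omega_{n-1}},$$
which does \emph{not} go to zero as $\lambda_{n}\to\lambda_{m}$.

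This last observation is the main obstacle: closeness of $\lambda_{n}$ to $\lambda_{j(n)}$ alone is not enough. To circumvent it I would defer the choice of $\omega_{n-1}$ to step $n$ (rather than step $n-1$) and use it to absorb $|c_{n}^{(n)}|$. Concretely, at step $n\geq 2$, with $\lambda_{1},\dots,\lambda_{n-1}$ and $\omega_{1},\dots,\omega_{n-2}$ already fixed, all previously established estimates depend only on those parameters and so are preserved by the new choices. First pick $\omega_{n-1}$ so large that $2^{n-1}/(\omega_{1}\cdots\omega_{n-1})\leq 2^{-(n+1)}$, bounding the $k=n$ contribution uniformly in $\lambda_{n}$. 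Then pick $\lambda_{n}\in\T\setminus\{\lambda_{1},\dots,\lambda_{n-1}\}$ close enough to $\lambda_{j(n)}$ that, simultaneously, (i) the boundedness constraint $\omega_{n-1}|\lambda_{n}-\lambda_{j(n)}|\leq\gamma\,|\lambda_{n-1}-\lambda_{j(n-1)}|$ recalled in the previous paragraph holds, (ii) the sum over $2\leq k\leq m$ is at most $2^{-2(n+1)}$, and (iii) the sum over $m+1\leq k\leq n-1$ is at most $2^{-2(n+1)}$. Adding the three contributions yields $\|u^{(n)}-u^{(j(n))}\|^{2}\leq 3\cdot 2^{-2(n+1)}<2^{-2n}$, whence the claim; exactly the same scheme produces any prescribed null sequence $(\gamma_{n})$ in place of $(2^{-n})$.
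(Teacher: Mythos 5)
Your proposal is correct and follows essentially the same route as the paper: the same splitting of $u^{(n)}-u^{(j(n))}$ at the index $j(n)$, the same identification of the $k=n$ coefficient as the one term that does not vanish as $\lambda_{n}\to\lambda_{j(n)}$ (because $|\lambda_{n}-\lambda_{j(n)}|$ cancels between numerator and denominator), and the same remedy of choosing $\omega_{n-1}$ very large at stage $n$ before choosing $\lambda_{n}$ close to $\lambda_{j(n)}$. The only cosmetic difference is that you isolate the $k=n$ term as a separate third piece, whereas the paper keeps it inside the tail sum $w^{(n)}$ and singles it out within the estimate.
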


\begin{proof}[Proof of Lemma \ref{lem1}]
We have:
\begin{eqnarray*}
 u^{(n)}-u^{(j(n))}&=&\sum_{k=2}^{j(n)}\left(\frac{(\lambda_{n} -\lambda _{k-1})\ldots (\lambda_{n} -\lambda _{1})}{\alpha _{k-1}\ldots\alpha _{1}}-\frac{(\lambda_{j(n)} -\lambda _{k-1})\ldots (\lambda_{j(n)} -\lambda _{1})}{\alpha _{k-1}\ldots\alpha _{1}}\right)e_{k}\\
 &+&\sum_{k=j(n)+1}^{n}\frac{(\lambda_{n} -\lambda _{k-1})\ldots (\lambda_{n} -\lambda _{1})}{\alpha _{k-1}\ldots\alpha _{1}}\,e_{k}:=v^{(n)}+w^{(n)}.
\end{eqnarray*}
We denote the first sum by $v^{(n)}$ and the second one by $w^{(n)}$. If $\varepsilon_{n} >0$ is any positive number, we can ensure that $||v^{(n)}||<\varepsilon _{n}$ by choosing $\lambda _{n}$ such that $|\lambda _{n}-\lambda _{j(n)}|$ is sufficiently small, because the quantities $\alpha _{k-1}\ldots\alpha _{1}$ for $k\leq j(n)$ do not depend on $\lambda_n$. Let us now estimate
\begin{eqnarray*}
 ||w^{(n)}||^{2}&=&\sum_{k=j(n)+1}^{n}\left|\frac{(\lambda_{n} -\lambda _{k-1})\ldots (\lambda_{n} -\lambda _{1})}{\alpha _{k-1}\ldots\alpha _{1}}\right|^{2}\\
 &=&\sum_{k=j(n)+1}^{n} \frac{1}{\omega _{k-1}^{2}\ldots\omega _{1}^{2}}\,\cdot\,\left|\frac{(\lambda_{n} -\lambda _{k-1})\ldots (\lambda_{n} -\lambda _{1})}{\lambda _{k}-\lambda _{j(k)}}\right|^{2}
\end{eqnarray*}
since $${\alpha _{k-1}\ldots\alpha _{1}}=\omega _{k-1}\ldots\omega _{1}\,|\lambda _{k}-\lambda _{j(k)}|.$$
We estimate now each term in this sum. We can suppose that $|\lambda _{p}-\lambda _{q}|\leq 1$ for any $p$ and $q$ (this is no restriction), so
$|(\lambda_{n} -\lambda _{k-1})\ldots (\lambda_{n} -\lambda _{1})|\leq |\lambda _{n}-\lambda _{j(n)}|$ since $j(n)\in\{1,\ldots, k-1\}$. Thus for $k=j(n)+1,\ldots, n$,
$$\frac{1}{\omega _{k-1}^{2}\ldots\omega _{1}^{2}}\,\cdot\,\left|\frac{(\lambda_{n} -\lambda _{k-1})\ldots (\lambda_{n} -\lambda _{1})}{\lambda _{k}-\lambda _{j(k)}}\right|^{2}\leq\frac{1}{\omega _{k-1}^{2}\ldots\omega _{1}^{2}}\,\cdot\,\left|\frac{\lambda _{n}-\lambda _{j(n)}}{\lambda _{k}-\lambda _{j(k)}}\right|^{2}\cdot$$
If $k\in\{j(n)+1,\ldots,n-1\}$, the term on the right-hand side can be made arbitrarily small provided that we choose $\lambda _{n}$ in such a way that $|\lambda _{n}-\lambda _{j(n)}|$ is very small \wrt\ the quantities $|\lambda _{k}-\lambda _{j(k)}|\,.\,\omega _{k-1}\ldots\omega _{1}$, $k<n$. However for $k=n$, we only get the bound ${\omega _{n-1}^{-2}\ldots\omega _{1}^{-2}},$ which has to be made small if we want $||w^{(n)}||$ to be small. So we have to impose a condition on the weights $\omega _{n}$: we take $\omega _{n-1}$ so large with respect to $\omega _{1},\ldots, \omega _{n-2}$ that ${\omega _{n-1}^{-2}\ldots\omega _{1}^{-2}}$ is extremely small.
\par\smallskip
All the conditions on the $\lambda _{n}$'s and the $\omega _{n}$'s needed until now can indeed be satisfied simultaneously: at  stage $n$ of the construction, we take
$\omega _{n-1}$ very large. After this we take $\lambda _{n}$ extremely close to $\lambda _{j(n)}$.
Thus we can ensure that $||w^{(n)}||<\varepsilon _{n}$, hence that $||u^{(n)}-u^{(j(n))}||<2\varepsilon _{n}$. Taking $\varepsilon _{n}=2^{-(n+1)}$ gives our statement.
\end{proof}

Thanks to Lemma \ref{lem1}, it is easy to see that $T$ satisfies the assumptions of Theorem \ref{th0bis}:

\begin{proposition}\label{prop1}
The \op\ $T$ satisfies the assumptions of Theorem \ref{th0bis}. Hence it has a \ps, and in particular its unimodular point spectrum is uncountable.
\end{proposition}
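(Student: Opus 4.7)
The plan is to verify the three hypotheses (i)–(iii) of Theorem \ref{th0bis} for the family $(u^{(n)})_{n\geq 1}$ constructed above, and then simply invoke that theorem. The uncountability of $\sigma_p(T) \cap \T$ would then come for free: the perfect spanning property is witnessed by a continuous probability measure $\sigma$ on $\T$, and if $\sigma_p(T) \cap \T$ were countable, then $B := \T \setminus (\sigma_p(T) \cap \T)$ would have full $\sigma$-measure while $\ker(T - \lambda I) = \{0\}$ for every $\lambda \in B$, contradicting the defining density property.

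Condition (i) is essentially by construction: the $\lambda_n$'s are chosen pairwise distinct and in $\T$, the algebraic computation preceding Lemma \ref{lem1} exhibits $u^{(n)} \in \ker(T - \lambda_n I)$, and $u^{(n)} \neq 0$ because its $e_1$-coefficient equals $1$.

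For condition (ii), I would exploit the fact that $(u^{(n)})_{n\geq 1}$ is triangular with respect to the canonical basis $(e_n)_{n\geq 1}$. Indeed $u^{(n)} \in \textrm{sp}[e_1,\ldots,e_n]$, and its $e_n$-coefficient is $\prod_{k=1}^{n-1}(\lambda_n - \lambda_k)/\prod_{k=1}^{n-1}\alpha_k$, which is non-zero since the $\lambda_k$'s are pairwise distinct and each $\alpha_k$ is non-zero (recall that $j(k+1) < k+1$ forces $\lambda_{j(k+1)} \neq \lambda_{k+1}$). A straightforward induction then yields $e_n \in \textrm{sp}[u^{(1)}, \ldots, u^{(n)}]$ for every $n$, so $\textrm{sp}[u^{(n)} : n \geq 1]$ contains the dense subspace of finitely supported sequences.

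Condition (iii) is where the design of the function $j$ pays off. Given $i \geq 1$ and $\varepsilon > 0$, the second defining property of $j$ ensures that the fiber $\{n \geq 2 : j(n) = i\}$ is infinite, so I may pick $n$ in this fiber with $2^{-n} < \varepsilon$ and invoke Lemma \ref{lem1} to obtain
$$\|u^{(n)} - u^{(i)}\| = \|u^{(n)} - u^{(j(n))}\| \leq 2^{-n} < \varepsilon.$$
None of the three steps is genuinely delicate — all the quantitative work has already been absorbed into Lemma \ref{lem1} and the inductive choice of the $\lambda_n$'s and $\omega_n$'s — so I do not expect a real obstacle here. The only point worth highlighting is that the infinite-fibers property of $j$ is precisely the combinatorial ingredient that converts the single telescopic estimate of Lemma \ref{lem1} into the clustering condition (iii) demanded by Theorem \ref{th0bis}.
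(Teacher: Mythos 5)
Your proposal is correct and follows essentially the same route as the paper's proof: (i) from the eigenvector computation with distinct $\lambda_n$, (ii) from the triangular structure of the $u^{(n)}$ with nonzero $e_n$-coefficient, and (iii) from the infinite fibers of $j$ combined with Lemma \ref{lem1}. The extra remark explaining why perfect spanning forces an uncountable unimodular point spectrum (a countable set is $\sigma$-null for a continuous measure $\sigma$) is a correct elaboration of a point the paper leaves implicit.
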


\begin{proof}[Proof of Proposition \ref{prop1}]
It suffices to show that the sequence $(u^{(n)})_{n\geq 1}$ satisfies properties (i), (ii) and (iii).
That (i) is satisfied is clear, since the vectors $u^{(n)}$ are \eve s of $T$ associated to the \eva s $\lambda _{n}$ which are all distinct. Since for each $n\geq 1$ the vector $u^{(n)}$ belongs to the span of the first $n$ basis vectors $e_{1},\ldots, e_{n}$ and $\pss{u^{(n)}}{e_{n}}\not = 0$, the linear span of the vectors $u^{(n)}$, $n\geq 1$, contains all finitely supported vectors of $\ell_{2}(\N)$, and thus (ii) holds true. It remains to prove (iii).
As the function $j$ takes every value in $\N$ infinitely often, it follows from Lemma \ref{lem1} that for any $n\geq 1$ there exists a strictly increasing sequence $(p_{s}^{(n)})_{s\geq 1}$ of integers such that
$$||u^{(p_{s}^{(n)})}-u^{(n)}|| \textrm{ tends to } 0 \textrm{ as } s \textrm{ tends to }+\infty .$$
Hence (iii) is true.
\end{proof}

In order to conclude the proof of Theorem \ref{th1}, it remains to show that $T$ is partially power-bounded \wrt\ $(n_{k})_{k\geq 0}$, with $\sup_{k\geq 0}||T^{n_{k}}||\leq 1+\delta $. This is the most difficult part of the proof, which uses the assumption that $(n_{k})_{k\geq 0}$ is not a \js, and it is the object of the next section.

\subsection{Proof of Theorem \ref{th1bis}: the hard part}
In order to estimate the norms $||T^{n_{k}}||$, we will show that provided the $\omega _{n}$'s and $\lambda _{n}$'s are suitably chosen, $||T^{n_{k}}-D^{n_{k}}||\leq \delta $ for every $k\geq 1$. Since $||D^{n_{k}}||=1$, this will prove that $||T^{n_{k}}||\leq 1+\delta $ for every $k\geq 1$.
\par\smallskip
$\blacktriangleright$ \textbf{An expression of $(T^{n}-D^{n})$.}
We first have to compute $(T^{n}-D^{n})x$ for $n\geq 1$ and $x\in H$. For $k,l\geq 1$, let $t_{k,l}^{(n)}=\pss{T^{n}e_{l}}{e_{k}}$ be the coefficient in row $k$ and column $l$ of the matrix representation of $T^{n}$. If $k>l$, $t_{k,l}^{(n)}=0$
(all coefficients below the diagonal are zero), and if $l-k>n$, $t_{k,l}^{(n)}=0$ (all coefficients which are not in one of the first $n$ upper diagonals of the matrix are zero). We have $t_{k,k}^{(n)}=\lambda _{k}^{n}$ for any $k\geq 1$.
\par\smallskip

\begin{lemma}\label{lem2}
For any $k,l\ge 1$ such that $1\leq l-k\leq n$,
$$t_{k,l}^{(n)}=\alpha _{l-1}\alpha _{l-2}\ldots\alpha _{k}\sum_{j_{k}+\ldots+j_{l}=n-(l-k)}\lambda _{k}^{j_{k}}\ldots\lambda _{l}^{j_{l}}.$$
\end{lemma}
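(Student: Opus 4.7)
The plan is to read the formula as a path-counting statement. Since $Te_m = \lambda_m e_m + \alpha_{m-1}e_{m-1}$ (with the convention $\alpha_0 = 0$), iterating $T$ exactly $n$ times on $e_l$ amounts to choosing, at each of the $n$ steps, whether to \emph{stay} at the current level $m$ (picking up a factor $\lambda_m$) or to \emph{descend} from level $m$ to level $m-1$ (picking up a factor $\alpha_{m-1}$). The coefficient $t_{k,l}^{(n)}=\langle T^n e_l,e_k\rangle$ is thus the sum, over all such length-$n$ paths from level $l$ to level $k$, of the corresponding products of weights. For the path to end at level $k$, it must descend exactly $l-k$ times and stay exactly $n-(l-k)$ times, which in particular requires $0\leq l-k\leq n$.

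To turn this into the displayed formula, I would encode each path by the tuple $(j_k,j_{k+1},\ldots,j_l)$ of nonneg\-ative integers, where $j_i$ counts the stays at level $i$, so that $j_k+\cdots+j_l = n-(l-k)$. The stays contribute $\lambda_k^{j_k}\lambda_{k+1}^{j_{k+1}}\cdots\lambda_l^{j_l}$, and the descents contribute exactly one factor of $\alpha_{l-1},\alpha_{l-2},\ldots,\alpha_k$ each (the order being forced by the path structure). Summing over all admissible tuples gives
\[
t_{k,l}^{(n)}=\alpha_{l-1}\alpha_{l-2}\cdots\alpha_k\!\!\sum_{j_k+\cdots+j_l=n-(l-k)}\!\!\lambda_k^{j_k}\cdots\lambda_l^{j_l},
\]
as claimed.

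If one prefers a purely algebraic argument, the same statement follows by induction on $n$: from $Te_l=\lambda_l e_l+\alpha_{l-1}e_{l-1}$ one gets the recursion $t_{k,l}^{(n+1)}=\lambda_l\, t_{k,l}^{(n)}+\alpha_{l-1}\,t_{k,l-1}^{(n)}$, and the right-hand side of the formula satisfies the same recursion, as seen by splitting the sum over $j_k+\cdots+j_l=(n+1)-(l-k)$ according to whether $j_l\geq 1$ (which feeds the $\lambda_l t_{k,l}^{(n)}$ term after reindexing $j_l\mapsto j_l-1$) or $j_l=0$ (which gives the $\alpha_{l-1}t_{k,l-1}^{(n)}$ term). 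The base case $n=1$ reduces to $t_{l-1,l}^{(1)}=\alpha_{l-1}$, which is immediate. There is no conceptual obstacle here; the only care needed is the bookkeeping of indices, in particular handling the boundary cases $l=k$ (which is outside the range stated in the lemma but is consistent with $t_{k,k}^{(n)}=\lambda_k^n$) and the vanishing of the shift at $e_1$, which never intervenes since all paths considered remain at levels $\geq k\geq 1$.
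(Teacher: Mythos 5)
Your proposal is correct. Your second, ``purely algebraic'' argument is essentially identical to the paper's proof: the paper also proceeds by induction on $n$ via the recursion $t_{k,l}^{(n+1)}=\alpha_{l-1}t_{k,l-1}^{(n)}+\lambda_l t_{k,l}^{(n)}$ and the same split of the sum according to $j_l=0$ versus $j_l\geq 1$ (the paper additionally writes out the boundary cases $l-k=1$ and $l-k=n+1$ explicitly, which is exactly the index bookkeeping you flag). Your first argument, however, is a genuinely different and more direct route: reading $t_{k,l}^{(n)}=\langle T^n e_l,e_k\rangle$ as a weighted count of lattice paths that descend exactly $l-k$ times and stay $n-(l-k)$ times, encoded by the tuple $(j_k,\dots,j_l)$ of sojourn times. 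This is sound --- the bijection between paths and admissible tuples is clear, the descent weights are forced to be $\alpha_{l-1},\dots,\alpha_k$, and since everything is scalar the ordering of factors is immaterial --- and it explains \emph{why} the formula holds rather than merely verifying it, at the cost of being slightly less self-contained to write down rigorously than the induction. Either argument suffices.
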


\begin{proof}
The proof is done by induction on $n\geq 1$.

$\bullet$ $n=1$: in this case $l=k+1$, and the formula above gives $t_{k, k+1}^{(1)}=\alpha _{k}$, which is true.
\par\smallskip
$\bullet$ Suppose that the formulas above are true for any $m\leq n$. Let $k$ and $l$ be such that $1\leq l-k\leq n+1$ (in particular $l\geq 2$). We have
$$t_{k,l}^{(n+1)}=t_{k,l-1}^{(n)}t_{l-1,l}^{(1)}+t_{k,l}^{(n)}t_{l,l}^{(1)}=
\alpha _{l-1}t_{k,l-1}^{(n)}+\lambda _{l}t_{k,l}^{(n)}.$$
If $2\le l-k\le n$, we can apply the induction assumption to the two quantities $t_{k,l-1}^{(n)}$ and $t_{k,l}^{(n)}$, and we get
\begin{eqnarray*}
t_{k,l}^{(n+1)}&=& \alpha _{l-1}\alpha _{l-2}\ldots\alpha _{k}\sum_{j_{k}+\ldots+j_{l-1}=n-(l-1-k)}\lambda _{k}^{j_{k}}\ldots\lambda _{l-1}^{j_{l-1}}\\
&+&
 \alpha _{l-1}\alpha _{l-2}\ldots\alpha _{k}\sum_{j_{k}+\ldots+j_{l}=n-(l-k)}\lambda _{k}^{j_{k}}\ldots\lambda _{l-1}^{j_{l-1}}\lambda _{l}^{j_{l}+1}\\
& =&
 \sum_{j_{k}+\ldots+j_{l-1}+j_{l}=n+1-(l-k), \, j_{l}=0}\lambda _{k}^{j_{k}}\ldots\lambda _{l-1}^{j_{l-1}}\lambda _{l}^{j_{l}}\\
&+&
\sum_{j_{k}+\ldots+j_{l-1}+j_{l}=n+1-(l-k), \, j_{l}\geq 1}\lambda _{k}^{j_{k}}\ldots\lambda _{l-1}^{j_{l-1}}\lambda _{l}^{j_{l}}\\
&=&
\sum_{j_{k}+\ldots+j_{l-1}+j_{l}=n+1-(l-k)}\lambda _{k}^{j_{k}}\ldots\lambda _{l-1}^{j_{l-1}}\lambda _{l}^{j_{l}}
\end{eqnarray*}
and the formula is proved for $1\le l-k\le n$. It remains to treat the cases where $l-k=1$ and where $l-k=n+1$.
If $l-k=1$, we have $t_{k,k+1}^{(n+1)}=\alpha _{k}\lambda _{k}^{n}+\lambda _{k+1}t_{k,k+1}^{(n)}.$ By the induction assumption
$$t_{k,k+1}^{(n)}=\alpha _{k}\sum_{j_{k}+j_{k+1}=n-1}\lambda _{k}^{j_{k}}\lambda _{k+1}^{j_{k+1}}=\alpha _{k}\,\frac{\lambda _{k+1}^{n}-\lambda _{k}^{n}}{\lambda _{k+1}-\lambda _{k}}$$ so that
$$t_{k,k+1}^{(n+1)}=\alpha _{k}
\left(\lambda _{k}^{n}+\lambda _{k+1}\frac{\lambda_{k+1}^{n}-\lambda _{k}^{n}}{\lambda _{k+1}-\lambda _{k}}\right)=\alpha _{k}\,\frac{\lambda _{k+1}^{n+1}-\lambda _{k}^{n+1}}{\lambda _{k+1}-\lambda _{k}}=
\alpha _{k}\sum_{j_{k}+j_{k+1}=n}\lambda _{k}^{j_{k}}\lambda _{k+1}^{j_{k+1}}$$ which is the formula we were looking for.
Lastly, when $l-k=n+1$,
$t_{k,n+1+k}^{(n+1)}=\alpha _{n+k}t_{k,n+k}^{(n)}$. By the induction assumption $t_{k,n+k}^{(n)}=\alpha _{n+k-1}\ldots\alpha _{k}$, thus $t_{k,n+1+k}^{(n+1)}=\alpha _{n+k}\ldots\alpha _{k}$ and the formula is proved in this case too.
\end{proof}
\par\smallskip
$\blacktriangleright$ \textbf{A first estimate on the norms $||(T^{n}-D^{n})||$.}
For $x=\sum_{l\geq 1}x_{l}e_{l}$, let us estimate $||(T^{n}-D^{n})x||^{2}$: we have
$$(T^{n}-D^{n})x=\sum_{l\geq 1}x_{l}\left(\sum_{k\geq 1}t_{k,l}^{(n)}e_{k}\right)-
\sum_{l\geq 1}x_{l}t_{l,l}^{(n)}e_{l}=\sum_{l\geq 2}x_{l}\left(\sum_{k=\max(1,l-n)}^{l-1}t_{k,l}^{(n)}e_{k}\right)$$
so that by the Cauchy-Schwarz inequality
\begin{eqnarray*}
 ||(T^{n}-D^{n})x||^{2}\leq ||x||^{2}\sum_{l\geq 2}\left|\left| \sum_{k=\max(1, l-n)}^{l-1}t_{k,l}^{(n)}e_{k}\right|\right|^{2}
 \leq ||x||^{2} \sum_{l\geq 2}\sum_{k=\max(1,l-n)}^{l-1}|t_{k,l}^{(n)}|^{2}.
\end{eqnarray*}
We thus have to estimate for each $l\geq 2$ and $p\geq 1$ the quantities $$\sum_{k=\max(1,l-n_{p})}^{l-1}|t_{k,l}^{(n_{p})}|^{2}.$$
For $k,l\geq 1$, $1\leq l-k\leq n$, let
$$s_{k,l}^{(n)}=\sum_{j_{k}+\ldots+j_{l}=n-(l-k)}\lambda _{k}^{j_{k}}\ldots\lambda _{l}^{j_{l}}.$$ We have
$$t_{k,l}^{(n)}=\alpha _{l-1}\ldots\alpha _{k}\,s_{k,l}^{(n)}=\omega _{l-1}\ldots\omega _{k}\,\frac{|\lambda _{l}-\lambda _{j(l)}|}{|\lambda _{k}-\lambda _{j(k)}|}\,s_{k,l}^{(n)}$$
so that we have to estimate
$$\sum_{k=\max(1,l-n)}^{l-1}\omega _{l-1}^{2}\ldots\omega _{k}^{2}\,
\frac{|\lambda _{l}-\lambda _{j(l)}|^{2}}{|\lambda _{k}-\lambda _{j(k)}|^{2}}\,|s_{k,l}^{(n)}|^{2}.$$
We are going to show
 that the following property holds true:

\begin{lemma}\label{lem3}
For any $1\leq k\leq l-1$, there exists for each $k\le j\le l-1$ a complex number $c_{j}^{(k,l)}$ depending only on $\lambda _{1},\ldots,\lambda _{l-1}$ (and $k$ and $l$ of course), but neither on $\lambda _{l}$ nor on $n$, such that for any $n\geq l-k$,
$$s_{k,l}^{(n)}=\sum_{j=k}^{l-1}c_{j}^{(k,l)}\,\frac{\lambda _{l}^{n+1-(l-k)}-\lambda _{j}^{n+1-(l-k)}}{\lambda _{l}-\lambda _{j}}\cdot$$
\end{lemma}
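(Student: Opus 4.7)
My plan is to compute the coefficients $c_j^{(k,l)}$ explicitly via a generating-function / partial-fraction argument. The key observation is that $s_{k,l}^{(n)}$ is the complete homogeneous symmetric polynomial of degree $n-(l-k)$ in the $l-k+1$ variables $\lambda_k,\ldots,\lambda_l$, so its generating function is
$$\sum_{n\geq l-k} s_{k,l}^{(n)}\, t^{n-(l-k)}\ =\ \prod_{j=k}^{l}\frac{1}{1-\lambda_j t}.$$

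I would then set
$$c_j^{(k,l)}\ :=\ \frac{\lambda_j^{\,l-k-1}}{\prod_{k\leq i\leq l-1,\ i\neq j}(\lambda_j-\lambda_i)}\qquad\text{for }k\leq j\leq l-1,$$
which manifestly depends only on $\lambda_k,\ldots,\lambda_{l-1}$ (and on $k,l$), and identify these as the residues appearing in the standard partial-fraction expansion
$$\prod_{j=k}^{l-1}\frac{1}{1-\lambda_j t}\ =\ \sum_{j=k}^{l-1}\frac{c_j^{(k,l)}}{1-\lambda_j t}.$$
The exponent $l-k-1$ in the numerator of $c_j^{(k,l)}$ is dictated by the requirement that no polynomial remainder arise: the denominator on the left has degree $l-k$ while the numerator is simply $1$, so the residue at the simple pole $t=1/\lambda_j$ has precisely that form. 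Multiplying this identity by $(1-\lambda_l t)^{-1}$, combining with the elementary expansion
$$\frac{1}{(1-\lambda_l t)(1-\lambda_j t)}\ =\ \sum_{m\geq 0}\frac{\lambda_l^{m+1}-\lambda_j^{m+1}}{\lambda_l-\lambda_j}\, t^m\qquad(\lambda_l\neq\lambda_j),$$
and then reading off the coefficient of $t^{\,n-(l-k)}$, produces exactly the formula stated in the lemma, with $m+1=n+1-(l-k)$.

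The proof is essentially a routine generating-function computation, so there is no genuine obstacle; the only non-trivial step is guessing the correct form of $c_j^{(k,l)}$, and in particular the exponent $l-k-1$ in the numerator. A more combinatorial alternative would be induction on $l-k$, using the base case $s_{k,k+1}^{(n)}=(\lambda_{k+1}^{n}-\lambda_k^{n})/(\lambda_{k+1}-\lambda_k)$ together with the recursion $s_{k,l}^{(n)} = s_{k,l-1}^{(n-1)}+\lambda_l\, s_{k,l}^{(n-1)}$ (obtained by splitting the defining sum according to whether $j_l=0$ or $j_l\geq 1$); but the generating-function approach is by far the cleanest, and it has the additional merit of making the independence of $c_j^{(k,l)}$ from $\lambda_l$ and from $n$ completely transparent, which is precisely the feature needed in the subsequent estimate of $\|T^{n_k}-D^{n_k}\|$.
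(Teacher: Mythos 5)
Your proof is correct, and it takes a genuinely different route from the paper. The paper proves Lemma \ref{lem3} by induction on $l$: it splits the defining sum of $s_{k,l+1}^{(n)}$ over the last exponent $j_{l+1}$, applies the induction hypothesis to each $s_{k,l}^{(n-1-j_{l+1})}$, and sums the resulting geometric series; this produces the recursion $c_{j}^{(k,l+1)}=-\lambda_j c_j^{(k,l)}/(\lambda_l-\lambda_j)$ for $k\le j\le l-1$ and $c_{l}^{(k,l+1)}=\sum_{j=k}^{l-1}\lambda_l c_j^{(k,l)}/(\lambda_l-\lambda_j)$, but never writes the coefficients in closed form. You instead recognize $s_{k,l}^{(n)}$ as the complete homogeneous symmetric polynomial $h_{n-(l-k)}(\lambda_k,\dots,\lambda_l)$, use the product generating function, and peel off the factor $(1-\lambda_l t)^{-1}$ after a partial-fraction expansion of $\prod_{j=k}^{l-1}(1-\lambda_j t)^{-1}$; the residue computation gives the explicit value $c_j^{(k,l)}=\lambda_j^{\,l-k-1}/\prod_{i\neq j}(\lambda_j-\lambda_i)$, which is consistent with the paper's normalizations ($c_1^{(1,2)}=c_{l}^{(l,l+1)}=1$) and makes the independence from $\lambda_l$ and $n$ immediate. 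Both arguments need the $\lambda_i$ pairwise distinct, which is the standing assumption of the construction, and your degree count (numerator of degree $0$ against a denominator of degree $l-k\ge 1$, so no polynomial part) is right. The one practical remark is that the paper's inductive proof is not gratuitous: the recursion for the $c_j^{(k,l)}$ that it produces is quoted verbatim later, in the proof of Lemma \ref{lem6}. If one substitutes your proof, one must either verify that recursion from the closed formula (a short computation) or reprove Lemma \ref{lem6} directly from the explicit residues; this does not affect the validity of your proof of Lemma \ref{lem3} itself.
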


\begin{proof}
The proof is again done by induction on $l\geq 2$.

$\bullet$ Let us first treat the case $l=2$: we have to show that there exists $c_{1}^{(1,2)}$ such that for any $n\geq 2$,
$$s_{1,2}^{(n)}=c_{1}^{(1,2)}\, \frac{\lambda _{2}^{n}-\lambda _{1}^{n}}{\lambda _{2}-\lambda _{1}}\cdot$$
But
$$s_{1,2}^{(n)}=\sum_{j_{1}+j_{2}=n-1}\lambda _{1}^{j_{1}}\lambda _{2}^{j_{2}}=\sum_{j_{1}=0}^{n-1}\lambda _{1}^{j_{1}}\lambda _{2}^{n-1-j_{1}}=\frac{\lambda _{2}^{n}-\lambda _{1}^{n}}{\lambda _{2}-\lambda _{1}}$$
so this holds true with $c_{1}^{(1,2)}=1$.
\par\smallskip
$\bullet$ Suppose that the property is true for some $l\geq 2$, and consider for $1\leq k \leq l$ and $n\geq l+1-k$ the quantities
\begin{eqnarray*}
s_{k,l+1}^{(n)}&=&
\sum_{j_{k}+\ldots+j_{l+1}=n-(l+1-k)}\lambda _{k}^{j_{k}}\ldots\lambda _{l+1}^{j_{l+1}}\\
&=&\sum_{j_{l+1}=0}^{n-(l+1-k)}\left(
\sum_{j_{k}+\ldots+j_{l}=n-(l+1+j_{l+1}-k)}\lambda _{k}^{j_{k}}\ldots\lambda _{l}^{j_{l}}
\right)\lambda _{l+1}^{j_{l+1}}.
\end{eqnarray*}
If $1\le k\le l-1$, we can apply the induction assumption and we get that
\begin{eqnarray*}
s_{k,l+1}^{(n)}&=& \sum_{j_{l+1}=0}^{n-(l+1-k)}\lambda _{l+1}^{j_{l+1}}s_{k,l}^{(n-1-j_{l+1})}\\
&=& \sum_{j_{l+1}=0}^{n-(l+1-k)} \lambda _{l+1}^{j_{l+1}}\sum_{j=k}^{l-1}c_j^{(k,l)}\left(\frac{\lambda _{l}^{n-j_{l+1}-(l-k)}-\lambda _{j}^{n-j_{l+1}-(l-k)}}{\lambda _{l}-\lambda _{j}}\right)
\end{eqnarray*}
where $c_j^{(k,l)}$ depends only on $\lambda _{1},\ldots, \lambda _{l-1}$ (the first equality in the display above comes from the fact that $j_{l+1} \leq n-1-l+k$, i.e. $l-k\leq n-1-j_{l+1}$).
Thus
\begin{eqnarray*}
s_{k,l+1}^{(n)}&=&
\sum_{j=k}^{l-1} \frac{c_j^{(k,l)}}{\lambda _{l}-\lambda _{j}}\left(\sum_{j_{l+1}=0}^{n-(l+1-k)}\lambda _{l+1}^{j_{l+1}}\lambda _{l}^{n-j_{l+1}-(l-k)}-\lambda _{l+1}^{j_{l+1}}\lambda _{j}^{n-j_{l+1}-(l-k)}\right)\\
&=&\sum_{j=k}^{l-1}\frac{c_j^{(k,l)}}{\lambda _{l}-\lambda _{j}}\left(\lambda _{l}^{n-(l-k)}\,
\frac{1-(\lambda _{l+1}\overline{\lambda }_{l})^{n-(l-k)}}{1-(\lambda _{l+1}\overline{\lambda }_{l})}
-\lambda _{j}^{n-(l-k)}\,
\frac{1-(\lambda _{l+1}\overline{\lambda }_{j})^{n-(l-k)}}{1-(\lambda _{l+1}\overline{\lambda }_{j})}
\right)\\
&=& \sum_{j=k}^{l-1}
\frac{c_j^{(k,l)}}{\lambda _{l}-\lambda _{j}}\left(\lambda _{l}\,\frac{\lambda _{l+1}^{n-(l-k)}-\lambda _{l}^{n-(l-k)}}{\lambda _{l+1}-\lambda _{l}}-
\lambda _{j}\,\frac{\lambda _{l+1}^{n-(l-k)}-\lambda _{j}^{n-(l-k)}}{\lambda _{l+1}-\lambda _{j}}\right)\\
&=&  \sum_{j=k}^{l-1}\left(-
\frac{\lambda_j c_j^{(k,l)}}{\lambda _{l}-\lambda _{j}}\right)\left(\frac{\lambda _{l+1}^{n-(l-k)}-\lambda _{j}^{n-(l-k)}}{\lambda _{l+1}-\lambda _{j}}\right)\\
&+&\left(\sum_{j=k}^{l-1}
\frac{c_j^{(k,l)}}{\lambda _{l}-\lambda _{j}}\lambda_l\right)\left(\frac{\lambda _{l+1}^{n-(l-k)}-\lambda _{l}^{n-(l-k)}}{\lambda _{l+1}-\lambda _{l}}\right)\\
\end{eqnarray*}
i.e
\begin{eqnarray*}
s_{k,l+1}^{(n)}
&=&\sum_{j=k}^{l}c_{j}^{(k,l+1)}\,\frac{\lambda _{l+1}^{n+1-(l+1-k)}-\lambda _{j}^{n+1-(l+1-k)}}{\lambda _{l+1}-\lambda _{j}}
\end{eqnarray*}
where $$c_{j}^{(k,l+1)}=-
\frac{\lambda_j c_j^{(k,l)}}{\lambda _{l}-\lambda _{j}}\quad \textrm{ for }k\leq j\leq l-1\quad \textrm{and}\quad
c_{l}^{(k,l+1)}=\sum_{j=k}^{l-1}
\frac{c_j^{(k,l)}}{\lambda _{l}-\lambda _{j}}\lambda_l
$$
depend only on $\lambda _{1},\ldots, \lambda _{l}$. This settles the case where $1\le k\le l-1$. When $k=l$, we get
$$s_{l,l+1}^{(n)}=\sum_{j_{l}+j_{l+1}=n-1}\lambda _{l}^{j_{l}}\lambda _{l+1}^{j_{l+1}}=\frac{\lambda _{l+1}^{n}-\lambda _{l}^{n}}{\lambda _{l+1}-\lambda _{l}},$$ and the statement is true with $c_{l}^{(l,l+1)}=1$.
\end{proof}

\par\smallskip
Let us now go back to the estimate on $||(T^{n_{p}}-D^{n_{p}})x||^{2}$, $p\geq 0$: we want to show that if the coefficients $\lambda _{l}$ are suitably chosen, the following holds true: for any $p\geq 0$ we have
\par\smallskip
$\bullet$ for any $2\leq l \leq n_{p}$, $$\sum_{k=1}^{l-1}|t_{k,l}^{(n_{p})}|^{2}\leq \delta ^{2}\,2^{-l},$$

$\bullet$ for any $ l \geq n_{p}+1$, $$\sum_{k=l-n_{p}}^{l-1}|t_{k,l}^{(n_{p})}|^{2}\leq \delta ^{2}\,2^{-l}.$$
Let us first consider the case $2\leq l \leq n_{p}$.
\par\smallskip
$\blacktriangleright$ \textbf{The ``easy'' estimate on $||(T^{n_{p}}-D^{n_{p}})||$.}
Let us write
\begin{eqnarray*}
 \sum_{k=1}^{l-1}|t_{k,l}^{(n_{p})}|^{2}&=&\sum_{k=1}^{l-1}{\omega _{l-1}^{2}\ldots\omega _{k}^{2}}
 \left|\frac{\lambda _{l}-\lambda _{j(l)}}{\lambda _{k}-\lambda _{j(k)}}\right|^{2}|s_{k,l}^{(n)}|^{2}\\
 &\leq&\sum_{k=1}^{l-1}{\omega _{l-1}^{2}\ldots\omega _{k}^{2}}
 \left|\frac{\lambda _{l}-\lambda _{j(l)}}{\lambda _{k}-\lambda _{j(k)}}\right|^{2}\left(\sum_{j=k}^{l-1}|c_{j}^{(k,l)}|\left|\frac{\lambda _{l}^{n_{p}+1-(l-k)}-\lambda _{j}^{n_{p}+1-(l-k)}}{\lambda _{l}-\lambda _{j}}\right|\right)^{2}.
\end{eqnarray*}
In the sum indexed by $j$, we have two different cases to consider: either $j=j(l)$ or $j\not =j(l)$. The case $j=j(l)$ can happen only when $j(l)\geq k$. Thus the sum can be decomposed as
\begin{eqnarray*}
 &&\sum_{k=1}^{j(l)}{\omega _{l-1}^{2}\ldots\omega _{k}^{2}}
 \left|\frac{\lambda _{l}-\lambda _{j(l)}}{\lambda _{k}-\lambda _{j(k)}}\right|^{2}
 \left(\sum_{j=k, j\not = j(l)}^{l-1}|c_{j}^{(k,l)}|\left|\frac{\lambda _{l}^{n_{p}+1-(l-k)}-\lambda _{j}^{n_{p}+1-(l-k)}}{\lambda _{l}-\lambda _{j}}\right|\right.\\
 &&\quad+\left.|c_{j(l)}^{(k,l)}|
 \left|\frac{\lambda _{l}^{n_{p}+1-(l-k)}-\lambda _{j(l)}^{n_{p}+1-(l-k)}}{\lambda _{l}-\lambda _{j(l)}}\right|
 \right)^{2}\\
 &&\quad + \sum_{k=j(l)+1}^{l-1}{\omega _{l-1}^{2}\ldots\omega _{k}^{2}}
 \left|\frac{\lambda _{l}-\lambda _{j(l)}}{\lambda _{k}-\lambda _{j(k)}}\right|^{2}
 \left(\sum_{j=k, j\not = j(l)}^{l-1}|c_{j}^{(k,l)}|\left|\frac{\lambda _{l}^{n_{p}+1-(l-k)}-\lambda _{j}^{n_{p}+1-(l-k)}}{\lambda _{l}-\lambda _{j}}\right|\right)^{2}\\
 \end{eqnarray*}
 which is less than
 \begin{eqnarray}
 \nonumber&&\quad 2\,\sum_{k=1}^{l-1}{\omega _{l-1}^{2}\ldots\omega _{k}^{2}}
 \left|\frac{\lambda _{l}-\lambda _{j(l)}}{\lambda _{k}-\lambda _{j(k)}}\right|^{2}
 \left(\sum_{j=k, j\not = j(l)}^{l-1}|c_{j}^{(k,l)}|\left|\frac{\lambda _{l}^{n_{p}+1-(l-k)}-\lambda _{j}^{n_{p}+1-(l-k)}}{\lambda _{l}-\lambda _{j}}\right|\right)^{2}\\
 \nonumber&&\quad+2\,\sum_{k=1}^{j(l)}{\omega _{l-1}^{2}\ldots\omega _{k}^{2}}\,.\,\frac{1}{|\lambda _{k}-\lambda _{j(k)}|^{2}}\,.\,|c_{j(l)}^{(k,l)}|^{2}\,.\,\left|\lambda _{l}^{n_{p}+1-(l-k)}-\lambda _{j(l)}^{n_{p}+1-(l-k)}\right|^{2}
 \end{eqnarray}
 and this in turn is less than
 \begin{eqnarray}
 \label{eq1} &&\quad |\lambda _{l}-\lambda _{j(l)}|^{2}\,\left(8\,\sum_{k=1}^{l-1}{\omega _{l-1}^{2}\ldots\omega _{k}^{2}}\,.\,\frac{1}{|\lambda _{k}-\lambda _{j(k)}|^{2}}\,.\,\left(\sum_{j=k, j\not = j(l)}^{l-1}|c_{j}^{(k,l)}|\frac{1}{|\lambda _{l}-\lambda _{j}|}
 \right)^{2}\right)\\
 \nonumber &&\quad +2\,\sum_{k=1}^{j(l)}{\omega _{l-1}^{2}\ldots\omega _{k}^{2}}\,.\,\frac{1}{|\lambda _{k}-\lambda _{j(k)}|^{2}}\,.\,|c_{j(l)}^{(k,l)}|^{2}\,.\,\left|\lambda _{l}^{n_{p}+1-(l-k)}-\lambda _{j(l)}^{n_{p}+1-(l-k)}\right|^{2}.
\end{eqnarray}
Suppose (as we may) that $\lambda _{l}$ is so close to $\lambda _{j(l)}$ that $$|\lambda _{l}-\lambda _{j(l)}|\leq\frac{1}{2}\min_{j\leq l-1, j\not =j(l)}|\lambda _{j}-\lambda _{j(l)}|.$$ Then for any $j\leq l-1$ with $j\not = j(l)$ we have
$|\lambda _{l}-\lambda _{j}|\geq |\lambda _{j}-\lambda _{j(l)}|-|\lambda _{l}-\lambda _{j(l)}|\geq\frac{1}{2}|\lambda _{j}-\lambda _{j(l)}|$.
Thus the  first term in the expression (\ref{eq1}) above is less than
\begin{eqnarray*}
32\,|\lambda _{l}-\lambda _{j(l)}|^{2}\,\sum_{k=1}^{l-1}{\omega _{l-1}^{2}\ldots\omega _{k}^{2}}\,.\,\frac{1}{|\lambda _{k}-\lambda _{j(k)}|^{2}}\,.\,\left(\sum_{j=k, j\not = j(l)}^{l-1}|c_{j}^{(k,l)}|\frac{1}{|\lambda _{j(l)}-\lambda _{j}|}
 \right)^{2}.
 \end{eqnarray*}
Since the quantity between the brackets depends only on $\lambda _{1},\ldots,\lambda _{l-1},\omega _{1},\ldots,\omega _{l-1}$ but not on $\lambda _{l}$, the expression in the display above can be made arbitrarily small if $|\lambda _{l}-\lambda _{j(l)}|$ is small enough. 
So we take, for any $l\geq 2$, $\lambda _{l}$ with
$|\lambda _{l}-\lambda _{j(l)}|$ so small that
$$32\,|\lambda _{l}-\lambda _{j(l)}|^{2}\sum_{k=1}^{l-1}{\omega _{l-1}^{2}\ldots\omega _{k}^{2}}\,.\,\frac{1}{|\lambda _{k}-\lambda _{j(k)}|^{2}}\,.\,\left(\sum_{j=k, j\not = j(l)}^{l-1}|c_{j}^{(k,l)}|\frac{1}{|\lambda _{j(l)}-\lambda _{j}|}
 \right)^{2}\leq \delta ^{2}2^{-(l+1)}.$$ Observe that the estimate we get here does not depend on $n_{p}$ (it is valid for any $n$ in fact). This takes care of the first term in the sum (\ref{eq1}).
 \par\smallskip
 $\blacktriangleright$ \textbf{The ``hard'' estimate on $||T^{n_{p}}-D^{n_{p}}||$.}
 We have now to estimate the term
 \begin{eqnarray}\label{eq4}
 2\,\sum_{k=1}^{j(l)}{\omega _{l-1}^{2}\ldots\omega _{k}^{2}}\,.\,\frac{1}{|\lambda _{k}-\lambda _{j(k)}|^{2}}\,.\,|c_{j(l)}^{(k,l)}|^{2}\,.\,\left|\lambda _{l}^{n_{p}+1-(l-k)}-\lambda _{j(l)}^{n_{p}+1-(l-k)}\right|^{2}.
 \end{eqnarray}
 We have
 \begin{eqnarray*}
 |\lambda _{l}^{n_{p}+1-(l-k)}-\lambda _{j(l)}^{n_{p}+1-(l-k)}|&\leq& |\lambda _{l}^{n_{p}}-\lambda _{j(l)}^{n_{p}}| +
 |\lambda _{l}^{l-k-1}-\lambda _{j(l)}^{l-k-1}|\\
 &\leq &|\lambda _{l}^{n_{p}}-\lambda _{j(l)}^{n_{p}}| + (l-k-1)
 |\lambda _{l}-\lambda _{j(l)}|\\
 &\leq& |\lambda _{l}^{n_{p}}-\lambda _{j(l)}^{n_{p}}| +(l-2)|\lambda _{l}-\lambda _{j(l)}|
 \end{eqnarray*}
so that the quantity in (\ref{eq4}) is less than
 \begin{eqnarray}
 \label{eq5} &&4|\lambda _{l}^{n_{p}}-\lambda _{j(l)}^{n_{p}}|^{2}\sum_{k=1}^{j(l)}{\omega _{l-1}^{2}\ldots\omega _{k}^{2}}\,.\,\frac{|c_{j(l)}^{(k,l)}|^{2}}{|\lambda _{k}-\lambda _{j(k)}|^{2}}\\
 \nonumber&&\qquad\qquad\qquad+4(l-2)^{2}|\lambda _{l}-\lambda _{j(l)}|^{2}\sum_{k=1}^{j(l)}{\omega _{l-1}^{2}\ldots\omega _{k}^{2}}\,.\,\frac{|c_{j(l)}^{(k,l)}|^{2}}{|\lambda _{k}-\lambda _{j(k)}|^{2}}\cdot
 \end{eqnarray}
 As previously the second term in this sum can be made arbitrarily small for any $l\geq 2$ provided $|\lambda _{l}-\lambda _{j(l)}|$ is sufficiently small, and we can ensure that it is less than $\delta ^{2}2^{-(l+2)}$. The difficult term to handle is the first one, and it is here that we use our assumption on the sequence $(n_{p})_{p\geq 0}$ (which was never used in the proof until this point). Our assumption is that for any $\varepsilon >0$ there exists a
 $\lambda \in\T\setminus\{1\}$ such that $\sup_{p\geq 0}|\lambda ^{n_{p}}-1|\leq\varepsilon $. This can be rewritten using the distance on $\T$ defined by $$d_{(n_{p})}(\lambda ,\mu )=\sup_{p\geq 0}|\lambda ^{n_{p}}-\mu ^{n_{p}}|$$ as: for any $\varepsilon >0$ there exists a
 $\lambda \in\T\setminus\{1\}$ such that $d_{(n_{p})}(\lambda ,1)\leq\varepsilon $. This means (see \cite{BaG2}) that there exists an uncountable subset $K$ of $\T$ such that $(K,d_{(n_{p})})$ is a \sep\ metric space. Thus $K$ contains a subset $K'$ which is uncountable and perfect for the distance $d_{(n_{p})}$. This means that for every $\varepsilon >0$ and every $\lambda \in K'$, there exists a $\lambda '\in K'$, $\lambda '\not=\lambda $ such that $d_{(n_{p})}(\lambda ,\lambda ')<\varepsilon $. Observe that since $n_{0}=1$, $|\lambda -\lambda '|\leq d_{(n_{p})}(\lambda ,\lambda ')<\varepsilon $.
 \par\smallskip
 In the construction of the $\lambda _{l}$'s, $l\geq 1$, we start by taking $\lambda _{1}\in K'$. Then we take $\lambda _{2}$ in $K'$ such that $d_{(n_{p})}(\lambda _{2},\lambda _{j(2)})$ is extremely small, which is possible since $\lambda _{j(2)}=\lambda _{1}$ is not isolated in $K'$. In the same way we can take the $\lambda _{l}$'s for $l\geq 2$ to be elements of $K'$ such that $d_{(n_{p})}(\lambda _{l},\lambda _{(l)})$ is arbitrarily small, $\lambda _{l}\not=\lambda _{j(l)}$. Then $|\lambda _{l}-\lambda _{j(l)}|$ is also arbitrarily small. 
 \par\smallskip
 With this suitable choice of the $\lambda _{l}$'s, we can estimate the remaining term in (\ref{eq5}):
\begin{eqnarray}
 \label{eq6}&&4\,|\lambda _{l}^{n_{p}}-\lambda _{j(l)}^{n_{p}}|^{2}\sum_{k=1}^{j(l)}{\omega _{l-1}^{2}\ldots\omega _{k}^{2}}\,.\,\frac{|c_{j(l)}^{(k,l)}|^{2}}{|\lambda _{k}-\lambda _{j(k)}|^{2}}\\
 \nonumber&&\qquad\qquad\qquad\leq
 4\,d_{(n_{p})}(\lambda _{l},\lambda _{j(l)})^{2}\sum_{k=1}^{j(l)}{\omega _{l-1}^{2}\ldots\omega _{k}^{2}}\,.\,\frac{|c_{j(l)}^{(k,l)}|^{2}}{|\lambda _{k}-\lambda _{j(k)}|^{2}}\cdot
\end{eqnarray}
Since the sum in $k$ depends only on $\lambda _{1},\ldots,\lambda _{l-1},\omega _{1},\ldots,\omega _{l-1}$, but not on $\lambda _{l}$, by taking $\lambda _{l}$ such that $d_{(n_{p})}(\lambda _{l},\lambda _{j(l)})$ is extremely small, we ensure that the right-hand term in (\ref{eq6}) is less than $\delta ^{2}2^{-(l+2)}$ for instance, for every $l\geq 2$.
\par\smallskip
Let us stress that the restrictions on the size of $|\lambda _{l}-\lambda _{j(l)}|$ and $d_{(n_{p})}(\lambda _{l},\lambda _{j(l)})$ are imposed for any $l\geq 2$, and do not depend on a particular $n_{p}$. Thus we have proved that for any $p\geq 1$ and any $2\leq l\leq n_{p}$, we have with these choices of $\lambda _{l}$
$$\sum_{k=1}^{l-1}|t_{k,l}^{(n_{p})}|^{2}\leq \delta ^{2}2^{-l}.$$
\par\smallskip
It remains to treat the case where $l\geq n_{p}+1$, where we have to estimate the quantity
$$\sum_{k=l-n_{p}}^{l-1}|t_{k,l}^{(n_{p})}|^{2}$$ which is less than
\begin{eqnarray}\label{eq7}
 \quad\sum_{k=l-n_{p}}^{l-1}{\omega _{l-1}^{2}\ldots\omega _{k}^{2}}\,.\,\left|\frac{\lambda _{l}-\lambda _{j(l)}}{\lambda _{k}-\lambda _{j(k)}}\right|^{2}
\left(\sum_{j=k}^{l-1}|c_{j}^{(k,l)}|\,
\left|\frac{\lambda _{l}^{n_{p}+1-(l-k)}-\lambda _{j}^{n_{p}+1-(l-k)}}{\lambda _{l}-\lambda _{j}}\right|
\right)^{2}.
\end{eqnarray}
We make the same decomposition as above in the sum in $j$, by separating the cases $j=j(l)$ and $j\not =j(l)$. The case $j=j(l)$ can only happen when $j(l)\geq k$, so when $j(l)\geq l-n_{p}$, i.e. $n_{p}\geq l-j(l)$. The estimates on the term not involving the index $j=j(l)$ are worked out exactly as previously, and this term can be made arbitrarily small provided
$|\lambda _{l}-\lambda _{j(l)}|$ is very small. The other term appears when $n_{p}\geq l-j(l)$, and is equal to
\begin{eqnarray}
\nonumber &&2
 \sum_{k=l-n_{p}}^{j(l)}
 |\lambda _{l}^{n_{p}+1-(l-k)}-\lambda _{j(l)}^{n_{p}+1-(l-k)}|^{2}
 {\omega _{l-1}^{2}\ldots\omega _{k}^{2}}\,.\,\frac{|c_{j(l)}^{(k,l)}|^{2}}{|\lambda _{k}-\lambda _{j(k)}|^{2}}\\
\nonumber &&\qquad\qquad\qquad\qquad\leq
 4\,d_{(n_{p})}(\lambda _{l},\lambda _{j(l)})^{2}
 \sum_{k=1}^{j(l)}{\omega _{l-1}^{2}\ldots\omega _{k}^{2}}\,.\,\frac{|c_{j(l)}^{(k,l)}|^{2}}{|\lambda _{k}-\lambda _{j(k)}|^{2}}\\
 \label{eq8}&&\qquad\qquad\qquad\qquad+4\,(l-2)^{2}|\lambda _{l}-\lambda _{j(l)}|^{2}\sum_{k=1}^{j(l)}{\omega _{l-1}^{2}\ldots\omega _{k}^{2}}\,.\,\frac{|c_{j(l)}^{(k,l)}|^{2}}{|\lambda _{k}-\lambda _{j(k)}|^{2}}\cdot
\end{eqnarray}
The reasoning is then exactly the same as previously, and if for each $l\geq 2$ the quantity $d_{(n_{p})}(\lambda _{l},\lambda _{j(l)})$ is sufficiently small we have for any $p\geq 1$ and any $l\geq n_{p}+1$ that
$$\sum_{k=l-n_{p}}^{l-1}|t_{k,l}^{(n_{p})}|^{2}\leq\delta ^{2}2^{-l}.$$ Hence $||T^{n_{p}}-D^{n_{p}}||\leq\delta $ for any $p\geq 1$. For $p=0$, $||T-D|| =||B||<\delta $, so that
$$\sup_{p\geq 0}||T^{n_{p}}-D^{n_{p}}||\leq \delta .$$ This proves that $T$ is partially power-bounded \wrt\ $(n_{p})_{p\geq 0}$, with the estimate $\sup_{p\geq 0}||T^{n_{p}}||\leq 1+\delta $, and this finishes the proof of Theorem \ref{th1bis}.

\begin{remark}
 It is not difficult to see that the \ops\ constructed in Theorem \ref{th1bis} are invertible: they are of the form $T=D+B$ , where $D$ is invertible with $||D||=1$, and $||B||$ can be made arbitrarily small in the construction.
\end{remark}

\section{Rigidity sequences}

\subsection{An abstract characterization of rigidity sequences}
As mentioned  already in the introduction, it is in a sense not
difficult to characterize \rs s via \mea s on the unit circle although
such a characterization is rather abstract and not so easy to handle
in concrete situations. This characterization is well-know, and hinted
at for instance in \cite{FW} or \cite{4}, but since we have been
unable to locate it precisely in the literature, we give below a quick
proof of it. A proof is also given in the preprint \cite{preprint}.
Here and later we denote by $\hat{\sigma}(n)$
the $n$-th Fourier coefficient of a measure $\sigma$.

\begin{proposition}\label{prop4}
 Let $(n_{k})_{k\geq 0}$ be a strictly increasing sequence of positive integers. The following assertions are equivalent:
 \begin{itemize}
  \item[(1)] there exists a dynamical system $\varphi$ on a \mea\ space $(X,\mathcal{F},\mu)$ which is \wmx\ and rigid \wrt\ $(n_{k})_{k\geq 0}$;

 \item[(2)] there exists a continuous \proba\ \mea\ $\sigma  $  on $\T$  such that $\hat{\sigma  }(n_{k})\to 1$ as $n_{k}\to +\infty $.
 \end{itemize}
\end{proposition}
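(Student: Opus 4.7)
For $(1)\Rightarrow(2)$, consider the Koopman \op\ $U_{\varphi}$ on $L^{2}(X,\mathcal{F},\mu)$, which is unitary by Remark \ref{rem:inv}. Pick any $f\in L^{2}(\mu)$ with $\int f\,d\mu=0$ and $\|f\|_{2}=1$, and let $\sigma_{f}$ be its spectral \mea\ \wrt\ $U_{\varphi}$, normalized so that $\hat{\sigma}_{f}(n)=\langle U_{\varphi}^{n}f,f\rangle$ for every $n\in\Z$. The rigidity hypothesis $U_{\varphi}^{n_{k}}f\to f$ in $L^{2}(\mu)$ immediately yields $\hat{\sigma}_{f}(n_{k})\to\|f\|^{2}=1$. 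Moreover, \wmx\ of $\varphi$ is equivalent to $U_{\varphi}$ having no \eve s orthogonal to the constants, which via the spectral theorem forces $\sigma_{f}$ to be continuous; setting $\sigma:=\sigma_{f}$ establishes (2).

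For $(2)\Rightarrow(1)$ I rely on the Gaussian dynamical system construction. Let $V$ denote multiplication by $z$ on $L^{2}(\T,\sigma)$. The identity
\[
\|V^{n_{k}}\mathbf{1}-\mathbf{1}\|^{2}_{L^{2}(\sigma)}=\int_{\T}|z^{n_{k}}-1|^{2}\,d\sigma(z)=2\bigl(1-\mathrm{Re}\,\hat{\sigma}(n_{k})\bigr)\longrightarrow 0
\]
shows that $V^{n_{k}}\mathbf{1}\to\mathbf{1}$. Since $\mathbf{1}$ is cyclic for $V$ on $L^{2}(\T,\sigma)$ (the family $\{z^{n}\}_{n\in\Z}$ being total) and $(V^{n_{k}})$ is uniformly bounded, the set of $g$ with $V^{n_{k}}g\to g$ is closed and $V^{\pm 1}$-invariant, hence equals $L^{2}(\T,\sigma)$. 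Thus $V$ is a unitary with continuous spectral type $\sigma$ which is rigid along $(n_{k})$.

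I then invoke the Gaussian functor: there exists a \mpt\ $\psi$ on a \proba\ space $(Y,\mathcal{G},\nu)$ whose Koopman \op\ $U_{\psi}$, restricted to the orthogonal complement of the constants, is unitarily equivalent to the orthogonal sum $\bigoplus_{n\geq 1}V^{\odot n}$ of symmetric tensor powers of $V$ (the Wiener chaos decomposition of the Gaussian space). Since $\sigma$ is continuous, no $V^{\odot n}$ has point spectrum, so $\psi$ is \wmx. The strong convergence $V^{n_{k}}\to I$ propagates to each finite tensor power (via the elementary estimate $\|(V^{n_{k}})^{\odot n}(\xi_{1}\odot\cdots\odot\xi_{n})-\xi_{1}\odot\cdots\odot\xi_{n}\|\leq\sum_{j}\|V^{n_{k}}\xi_{j}-\xi_{j}\|\prod_{i\neq j}\|\xi_{i}\|$ and uniform boundedness), then to the algebraic sum of chaoses, and finally to all of $L^{2}(\nu)$ by density; hence $\psi$ is \rg\ \wrt\ $(n_{k})$.

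The main obstacle is precisely this last transfer step: constructing a \wmx\ \mpt\ whose first-chaos Koopman spectrum realizes the prescribed continuous $\sigma$, and verifying that rigidity propagates from the first chaos to the full $L^{2}(\nu)$. The underlying machinery -- Gaussian dynamical systems, Wiener chaos, the Fomin-Maruyama characterization of \wmx\ Gaussian systems -- is classical but somewhat heavy; everything else reduces to a routine spectral computation on $L^{2}(\T,\sigma)$.
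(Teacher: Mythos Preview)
Your proof is correct and follows essentially the same route as the paper: the spectral measure of a mean-zero $L^{2}$ function for $(1)\Rightarrow(2)$, and the Gaussian dynamical system built from $\sigma$ for $(2)\Rightarrow(1)$. The only differences are cosmetic --- the paper first symmetrizes $\sigma$ (so as to realize the system concretely as the shift on $\R^{\Z}$ associated to a real stationary Gaussian process) and verifies rigidity by checking weak convergence on the first chaos and citing \cite{Pe} for the extension, whereas you phrase things in the abstract Fock-space language and propagate strong convergence chaos by chaos; you should be aware that the standard Gaussian functor requires a real structure, which is exactly what the symmetrization of $\sigma$ provides.
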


Recall that a \proba\ \mea\ $\sigma  $ on $\T$ is said to be \emph{symmetric} if $\sigma  (\overline{A})=\sigma(A)$ for any Borel subset $A$ of $\T$ (where $\overline{A}$ denotes the conjugate set of $A$).
\par\smallskip
In order to prove Proposition \ref{prop4}, we are going to use the following well-known fact:

\begin{fact}\label{fact2}
Let $\varphi$ be a \mpt\  of the space $(X,\mathcal{F},\mu)$. The following
assertions are equivalent:
 \begin{itemize}
  \item[(a)] $\varphi$ is rigid \wrt\ $(n_{k})_{k\geq 0}$;

 \item[(b)] $U_{\varphi}^{n_{k}}\to I$ in the
weak operator topology
(WOT) of $L^{2}(X,\mathcal{F},\mu)$;

  \item[(c)] $U_{\varphi}^{n_{k}}\to I$ in the
strong operator topology
(SOT) of $L^{2}(X,\mathcal{F},\mu)$.
 \end{itemize}
\end{fact}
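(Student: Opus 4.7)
The plan is to establish the cyclic chain $(c) \Rightarrow (b) \Rightarrow (c) \Rightarrow (a) \Rightarrow (c)$ by routing everything through the fact that $U_\varphi$ is an isometry on $L^2(X,\mathcal{F},\mu)$ (because $\varphi$ preserves $\mu$), together with a density argument on characteristic functions.

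\textbf{Step 1: rewriting (a) in Hilbertian language.} I would first note the basic identity
\[
\norm{U_\varphi^{n_k}\chi_A - \chi_A}_{L^2}^2 = \mu\bigl(\varphi^{-n_k}(A)\triangle A\bigr),
\]
so (a) is literally the statement that $U_\varphi^{n_k}\chi_A \to \chi_A$ in norm for every $A\in\mathcal{F}$. By linearity this extends to simple functions, and since the $U_\varphi^{n_k}$ are isometries (in particular uniformly bounded) and simple functions are dense in $L^2(X,\mathcal{F},\mu)$, an $\varepsilon/3$ argument shows that $U_\varphi^{n_k}f \to f$ in norm for every $f\in L^2$. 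This proves $(a)\Rightarrow(c)$.

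\textbf{Step 2: the trivial direction.} Strong convergence always implies weak convergence, so $(c)\Rightarrow(b)$ is immediate.

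\textbf{Step 3: from weak to strong via isometry.} The key observation is that since $U_\varphi$ is an isometry, for every $f\in L^2$ one has
\[
\norm{U_\varphi^{n_k}f-f}^2 = \norm{U_\varphi^{n_k}f}^2 - 2\operatorname{Re}\pss{U_\varphi^{n_k}f}{f} + \norm{f}^2 = 2\norm{f}^2 - 2\operatorname{Re}\pss{U_\varphi^{n_k}f}{f}.
\]
If (b) holds, then $\pss{U_\varphi^{n_k}f}{f}\to \pss{f}{f}=\norm{f}^2$, so the right-hand side tends to $0$ and (c) follows. This gives $(b)\Rightarrow(c)$.

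\textbf{Step 4: closing the loop.} Finally, $(c)\Rightarrow(a)$ is obtained by specializing strong convergence to $f=\chi_A$ and reading the identity in Step 1 backwards. No step here should present a real obstacle — the only place where one must be slightly careful is Step 3, where the isometry property is essential (on a generic Banach space WOT-convergence to $I$ would not imply SOT-convergence), but on the $L^2$ side this is automatic since $\varphi$ preserves $\mu$.
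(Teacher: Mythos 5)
Your proof is correct, and all four steps go through: the identity $\norm{U_\varphi^{n_k}\chi_A-\chi_A}_{L^2}^2=\mu(\varphi^{-n_k}(A)\triangle A)$ is exactly right, the density/uniform-boundedness argument in Step 1 is valid because the $U_\varphi^{n_k}$ are isometries and simple functions are dense in $L^2$ of a finite measure space, and the expansion $\norm{U_\varphi^{n_k}f-f}^2=2\norm{f}^2-2\operatorname{Re}\pss{U_\varphi^{n_k}f}{f}$ in Step 3 correctly exploits the isometry property. The route differs slightly from the paper's: the paper closes the cycle as $(c)\Rightarrow(b)\Rightarrow(a)\Rightarrow(c)$, deducing $(b)\Rightarrow(a)$ by testing the weak convergence against characteristic functions and using the pointwise identity $\chi_{A\triangle\varphi^{-n_k}(A)}=\chi_A+\chi_{\varphi^{-n_k}(A)}-2\chi_A\chi_{\varphi^{-n_k}(A)}$, and then getting $(a)\Rightarrow(c)$ by the same identity plus density. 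You instead prove $(b)\Rightarrow(c)$ directly by the abstract Hilbert-space fact that for isometries WOT-convergence to $I$ forces SOT-convergence, and only afterwards read off $(a)$. Your version isolates the general principle (valid for any sequence of isometries on a Hilbert space, with no measure theory involved) and avoids a second pass through characteristic functions for that implication, at the cost of needing the density argument explicitly in Step 1; the paper's version stays entirely at the level of sets and their indicators. Both are complete proofs of the same elementary fact.
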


\begin{proof}[Proof of Fact \ref{fact2}]
The implication
$(c)\Rightarrow (b)$  is obvious. To prove $(b)\Rightarrow (a)$ it suffices to apply (b) to the characteristic functions $\chi_{A}$ of sets $A\in \mathcal{F}$:
$$\pss{U_{\varphi}^{n_{k}}\chi_{A}}{\chi_{A}}=\int_{X} \chi_{A}(\varphi^{n_{k}}(x))\chi_{A}(x)d\mu(x)\to \mu(A)\quad \textrm{as } n_{k}\to +\infty .$$
Now $\chi_{A\bigtriangleup \varphi^{-n_{k}}(A)}=\chi_{A}+\chi_{\varphi^{-n_{k}}(A)}-2\chi_{A}\chi_{\varphi^{-n_{k}}(A)}$ so that $$\mu({A\bigtriangleup \varphi^{-n_{k}}(A)})=2\mu(A)-2\int_{X} \chi_{A}(\varphi^{n_{k}}(x))\chi_{A}(x)d\mu(x)\to 0 \quad \textrm{as } n_{k}\to +\infty .$$ Hence $\varphi$ is rigid \wrt\ $(n_{k})_{k\geq 0}$. The proof of $(a)\Rightarrow (c)$ is done using the same kind of argument: thanks to the expression for $\chi_{A\bigtriangleup \varphi^{-n_{k}}(A)}$, we get that $||U_{\varphi}^{n_{k}}\chi_{A}-\chi_{A}||_{L^{2}}\to 0$ as $n_{k}\to +\infty $ for any $A\in \mathcal{F}$. Hence $||U_{\varphi}^{n_{k}}f-f||_{L^{2}}\to 0$ for any $f\in L^{2}(X,\mathcal{F},\mu)$, which is assertion $(c)$.
\end{proof}

\begin{proof}[Proof of Proposition \ref{prop4}]
Suppose first that (1) holds true, and let $\sigma  _{0}$ be the
reduced maximal spectral type of $U_{\varphi}$, i.e. the maximal
spectral type of the unitary \op\ $U$ induced by $U_{\varphi}$ on the
subspace
$H_{0}=\{f\in L^{2}(X,\mathcal{F},\mu) \textrm{ ; } \int_{X}
f(x)d\mu({x})=0\}$ (which is \inv\ by $U_{\varphi}$).
Note that $U_\varphi$ is unitary by Remark \ref{rem:inv}.
For the definition and basic properties of the spectral type of a
unitary operator see e.g.\cite{Na}.
Let $f_{0}\in H_{0}$ with $||f_{0}||=1$ be such that the spectral measure $\sigma  _{f_{0}}$ of $f_{0}$ is a representant of the class $\sigma  _{0}$. Since $\varphi$ is \wmx, $\sigma  _{f_{0}}$ is continuous, and it is a \proba\ \mea\ since $||f_{0}||=1$. For any $n\in\Z$ we have
$$\pss{U_\varphi^{n}f_{0}}{f_{0}}=\int_{\T}\lambda ^{n}d\sigma_{f_{0}}(\lambda)=\hat{\sigma  }_{f_{0}}(n).$$ Since $||U_{\varphi}^{n_{k}}f-f||_{L^{2}}\to 0$ for any $f\in H_{0}$, we get in particular that $\hat{\sigma  }_{f_{0}}(n_{k})\to ||f_{0}||^{2}=1$ as $n_{k}\to +\infty $, so (2) holds true.
\par\smallskip
Conversely, let $\sigma  $ be a continuous \proba\ \mea\ on $\T$ such that $\hat{\sigma  }(n_{k})\to 1$. Then $\int_{\T}|\lambda ^{n_{k}}-1|^{2}d\sigma  (\lambda )\to 0$ as $n_{k} \to +\infty $, so that in particular we have $\int_{\T}|\lambda ^{n_{k}}-1|d\sigma  (\lambda )\to 0$. Indeed $|\lambda ^{n_{k}}-1|^{2}=2(1-\Re e (\lambda ^{n_{k}}))$, hence
$\int_{\T}|\lambda ^{n_{k}}-1|^{2}d\sigma  (\lambda )=2\Re e (1-\hat{\sigma  }(n_{k}))\to 0$.
Let now $\overline{\sigma  }$ be the probability \mea\ on $\T$ defined by $\overline{\sigma  }(A)=\sigma  (\overline{A})$ for any $A\in \mathcal{F}$. Then $\overline{\sigma  }$ is a continuous \mea\ on $\T$, and
$\int_{\T}|\lambda ^{n_{k}}-1|d\overline{\sigma  }  (\lambda )
=\int_{\T}|\lambda ^{n_{k}}-1|d{\sigma  }  (\lambda )$ so that in particular $\hat{\overline{\sigma  }}(n_{k})\to 1$ as $n_{k} \to +\infty $. Setting $\rho :=(\sigma  +\overline{\sigma  })/2$, we obtain a continuous \emph{symmetric} probability \mea\ on $\T$ such that $\hat{\rho }(n_{k})\to 1$. So we can assume without loss of generality that the measure $\sigma  $ given by (2) is symmetric, and we have to construct out of this a \wmx\ \mpt\ of a \proba\ space which is \rg\ \wrt\ $(n_{k})$. The class of transformations which we use for this is the class of stationary Gaussian processes. We refer the reader to one of
 the references \cite{CFS}, \cite{Pet2} or \cite[Ch. 8]{Pe} for details about this, and in the forthcoming proof we use the notations of \cite[Ch. 8]{Pe}. Since $\sigma  $ is a symmetric probability \mea\ on $\T$, there exists a real-valued stationary Gaussian process $(X_{n})_{n\in\Z}$ whose spectral \mea\ is $\sigma  $. This Gaussian process lives on a \proba\
 space $(\Omega , \Sigma , \P)$ which can be realized as a sequence space: $\Omega =\R^{\Z}$, $\Sigma $ is the $\sigma  $-algebra generated by the sets $\Theta _{m,A}=\{(\omega _{n})_{n\in\Z} \textrm{ ; } (\omega _{-m},\ldots,\omega _{m})\in A\}$, $m\geq 0$, $A$ is a Borel subset of $\R^{2m+1}$, and
$\P$ is the probability that $(X _{-m},\ldots,X _{m})$ belongs to $A$, where $\P$ is $\tau$-invariant for the shift $\tau$ of the space $\R^\Z$, and $(X_n)$ satisfy $X_{n+1}=X_{n}\circ \tau$ for any $n\in\Z$.
The 
shift $\tau $ defines a \wmx\ transformation of $(\Omega , \Sigma , \P)$
 (see \cite{Pet2} for instance), and we have to see that it is \rg\ \wrt\ the sequence $(n_{k})$. Using the same argument as in the proof of \cite[Ch. 8, Th. 3.2]{Pe}, it suffices to show that for any functions $f,g$ belonging to $\mathcal{G}_{c}$, the complexification of the Gaussian subspace  of $L^{2}(\Omega , \Sigma , \P)$ spanned by $X_{n}, n\in\Z$, we have $\pss{U_{\tau}^{n_{k}}f-f}{g}\to 0$ as $n_{k}\to +\infty $.
If $\Phi:\mathcal{G}_{c}\to L^{2}(\T,\sigma  )$  denotes the map defined on the linear span of the $X_{n}$'s by
$\Phi(\sum c_{n} X_{n}):=\sum c_{n}\lambda ^{n}$, then $\Phi$ extends to a surjective isometry of $\mathcal{G}_{c}$ onto $L^{2}(\T,\sigma  )$, and we have for any $f\in \mathcal{G}_{c}$ that $U_{\tau}f=(\Phi^{-1}\circ M_{\lambda }\circ \Phi )f$, where $M_{\lambda }$ denotes multiplication by the independent variable $\lambda $ on $L^{2}(\T,\sigma  )$. Thus
$$\pss{U_{\tau}^{n_{k}}f-f}{g}=\pss{M_{\lambda }^{n_{k}}\Phi f-\Phi f}{\Phi g}=\int_{\T}(\lambda ^{n_{k}}-1)(\Phi f)(\lambda )\overline{(\Phi g (\lambda ))}d\sigma  (\lambda ).$$
Now if $h$ is any function in $L^{1}(\T,\sigma  )$, we have that $\int_{\T}|\lambda ^{n_{k}}-1|\,|h(\lambda )|d\sigma  (\lambda )\to 0$ as $n_{k}\to +\infty $ (it suffices to approximate $h$ by functions $h'\in L^{\infty }(\T,\sigma  )$ in $L^{1}(\T,\sigma  )$). Since $(\Phi f)\overline{(\Phi g )}$ belongs to $L^{1}(\T,\sigma  )$, we get that $\pss{U_{\tau}^{n_{k}}f-f}{g}\to 0$. It follows that $U_{\tau}^{n_{k}}\to I$ in the WOT of $L^{2}(\Omega ,\Sigma  ,\P)$ and $\tau$ is \rg\ \wrt\ $(n_{k})$ by Fact \ref{fact2}.
\end{proof}

\begin{remark}
The \ga\ dynamical systems considered in the proof of Proposition \ref{prop4} live on the space of sequences $\R^{\Z}$, which is not compact. But by the Jewett-Krieger Theo\-rem (see for instance \cite{Pet}), such a system is metrically isomorphic to a homeomorphism of the Cantor set.
\end{remark}

\subsection{Examples of rigidity and non-rigidity sequences}
Our first example of \rs s (obtained also in \cite{preprint}) is the following:
\begin{example}\label{ex1}
 Let $(n_{k})_{k\geq 0}$ be a strictly increasing sequence of positive numbers such that ${n_{k+1}}/{n_{k}}$ tends to infinity. Then $(n_{k})_{k\geq 0}$ is a \rs.
\end{example}

This fact follows from the following stronger Proposition
\ref{prop3} below, which will allow us to show later on in the paper that any such sequence is a \urs\ in the linear framework. The proof of Proposition \ref{prop3} uses ideas from \cite{BaG2}.

\begin{proposition}\label{prop3}
 Let $(n_{k})_{k\geq 0}$ be a strictly increasing sequence of positive numbers such that ${n_{k+1}}/{n_{k}}$ tends to infinity. There exists a compact perfect subset $K$ of $\T$ having the following two properties:
 \begin{itemize}
  \item[(i)] for any $\varepsilon >0$ there exists a compact perfect subset $K_{\varepsilon }$ of $K$ such that for any $\lambda \in K_{\varepsilon }$,
$$\sup_{k\geq 0}|\lambda ^{n_{k}}-1|\leq \varepsilon;$$

  \item[(ii)] $\lambda ^{n_{k}}$ tends to $1$ uniformly on $K$.
 \end{itemize}
\end{proposition}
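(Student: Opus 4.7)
The plan is a nested-arc Cantor construction in $\T$. I would fix a sequence $\delta_k \searrow 0$ (say $\delta_k = 2^{-k}$) and, using the lacunarity $n_{k+1}/n_k \to \infty$, choose $k_0$ so large that for every $k \geq k_0$ an arc of length $\delta_k/n_k$ comfortably contains several $n_{k+1}$-th roots of unity together with disjoint surrounding sub-arcs of length $\delta_{k+1}/n_{k+1}$. I would then build $K = \bigcap_{k \geq k_0} K_k$, where $K_{k_0}$ is a closed arc of length $\delta_{k_0}/n_{k_0}$ centered at $1$, and at each stage I replace every component of $K_k$ by two disjoint closed sub-arcs centered at $n_{k+1}$-th roots of unity contained inside it, taking care that the component containing $1$ keeps $1$ itself as the center of one of its two children. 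The result is a compact perfect subset $K$ of $\T$.

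For (ii), a point $\lambda$ in a component of $K_k$ around an $n_k$-th root $\xi$ satisfies $\lambda = \xi e^{i\beta}$ with $|\beta| \leq \delta_k/(2n_k)$, so $|\lambda^{n_k}-1| = |e^{in_k\beta}-1| \leq n_k|\beta| \leq \delta_k/2$; uniform convergence follows since only the finitely many $k < k_0$ are unconstrained and $\delta_k \to 0$.

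For (i), given $\varepsilon > 0$ I would pick $k_1 \geq k_0$ with both $\delta_{k_1} \leq \varepsilon$ and $n_{k_1-1}\delta_{k_1}/n_{k_1} \leq 2\varepsilon$ (both feasible, the second because $n_{k_1}/n_{k_1-1} \to \infty$). Let $A$ be the component of $K_{k_1}$ centered at $1$ (which exists by construction) and set $K_\varepsilon := K \cap A$; it is compact and perfect because the Cantor splitting continues unchanged inside $A$. For $\lambda \in A$ one has $|\lambda - 1| \leq \delta_{k_1}/(2n_{k_1})$, hence for $j < k_1$ the standard bound $|\lambda^{n_j} - 1| \leq n_j|\lambda - 1|$ gives $|\lambda^{n_j}-1| \leq n_{k_1-1}\delta_{k_1}/(2n_{k_1}) \leq \varepsilon$, while for $k \geq k_1$ the computation in step~(ii) yields $|\lambda^{n_k}-1| \leq \delta_k/2 \leq \varepsilon$.

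The only delicate point is the feasibility of the arc-splitting step. An arc of length $\delta_k/n_k$ contains roughly $\delta_k n_{k+1}/(2\pi n_k)$ of the $n_{k+1}$-th roots of unity, a number tending to infinity by hypothesis, so for $k \geq k_0$ we can always select two such roots whose $\delta_{k+1}/n_{k+1}$-neighborhoods lie disjointly inside the parent arc. This is the sole spot where the assumption $n_{k+1}/n_k \to \infty$ is used, and I expect it to be the main thing to verify carefully.
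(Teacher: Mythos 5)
Your construction is the same nested-arc Cantor scheme as the paper's, and your arguments for (i) and (ii) are sound once the set is built. The genuine gap is exactly at the point you flagged: the feasibility of the splitting step with the \emph{fixed} scale $\delta_k=2^{-k}$. The number of $n_{k+1}$-th roots of unity in an arc of length $\delta_k/n_k$ is about $\delta_k n_{k+1}/(2\pi n_k)$, and the hypothesis $n_{k+1}/n_k\to\infty$ does \emph{not} make this tend to infinity; it does not even keep it bounded away from $0$. For instance, if $n_k=(k+1)!$, so that $n_{k+1}/n_k=k+2\to\infty$, then $2^{-k}n_{k+1}/(2\pi n_k)=2^{-k}(k+2)/(2\pi)\to 0$, so for all large $k$ the parent arc contains no $n_{k+1}$-th root of unity at all, and no choice of $k_0$ repairs this since the splitting must succeed for every $k\geq k_0$. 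So the claim ``a number tending to infinity by hypothesis'' is false, and the step on which the whole construction rests can fail.

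The missing idea is that the arc scales must be adapted to the sequence rather than prescribed in advance: take for example $\delta_k=10\pi\,\sup_{j\geq k+1}(n_{j-1}/n_j)$, which still decreases to $0$ (so your part (ii) survives) and can still be made $\leq\varepsilon$ for $k$ large (so your part (i) survives verbatim), but now satisfies $\delta_k n_{k+1}/n_k\geq 10\pi$, guaranteeing at least two admissible, disjoint children inside each parent arc. This is precisely what the paper does: it works with $\gamma_k=5\pi\sup_{j\geq k}(n_{j-1}/n_j)$ and arcs of angular half-width $\theta_k/n_k$ where $\sin\theta_k=\gamma_k$, and verifies that each level-$k$ arc contains at least $\lfloor\frac{1}{\pi}(\frac{n_{k+1}}{n_k}\theta_k-\theta_{k+1})\rfloor\geq 2$ level-$(k+1)$ arcs. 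With that single change your proof coincides with the paper's (up to the paper placing its sub-arcs at multiples of $\pi/n_{k+1}$ rather than $2\pi/n_{k+1}$, and its slightly different extraction of $K_\varepsilon$ as $\{\lambda\in K:\ |\lambda-1|\leq\varepsilon/n_{\kappa-1}\}$).
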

Note that the existence of a compact perfect subset $K$ of $\T$ satisfying (ii) above implies that $(n_k)_{k\geq 0}$ is a rigidity sequence. Indeed, any continuous probability measure $\sigma$ supported on $K$ satifies assertion (2) in Proposition \ref{prop4}.
\begin{proof}
For any $k\geq 1$, let $\gamma _{k}=5\pi\,\sup_{j\geq k}({n_{j-1}}/{n_{j}})$: $\gamma _{k}$ decreases to $0$ as
$k$ tends to infinity, and let $k_{0} $ be such that $\gamma _{k}\leq \frac{1}{2}$ for any $k\geq k_{0}$. Let $\theta _{k}\in ]0,\frac{\pi}{2}[$ be such that $\gamma _{k}=\sin \theta _{k}$ for $k\geq k_{0}$. The sequence $(\theta _{k})$ decreases to $0$, and $\theta _{k}\sim \gamma _{k}$ as $k$ tends to infinity. Thus there exists a $k_{1}\geq k_{0}$ such that for any $k\geq k_{1}$,
$\theta _{k}\geq 4\pi\, \sup_{j\geq k}( n_{j-1}/n_{j})\geq 4\pi\,(n_{k}/n_{k+1})$, so that $({n_{k+1}}/{n_{k}})\, .\,{\theta _{k}}\geq {4\pi}$. Let $$K_{0}=\{\lambda \in\T \textrm{ ; } \forall k\geq k_{1}\quad |\lambda ^{n_{k}}-1|\leq 2\gamma _{k}\}.$$ If we write $\lambda\in\T$ as $\lambda =e^{2i\theta }$, $\theta \in [0,\pi[$, $\lambda $ belongs to $K_{0}$ \ifff\ $|\sin(n_{k}\theta )|\leq \gamma _{k}$ for any $k\geq k_{1}$. Let $F_{k}=\{\theta \in [0,\pi[ \textrm{ ; } |\sin(n_{k}\theta )|\leq \sin \theta_{k} \}$: $F_{k}$ consists of intervals of the form $[-\frac{\theta _{k}}{n_{k}}+\frac{l\pi}{n_{k}},\frac{\theta _{k}}{n_{k}}+\frac{l\pi}{n_{k}}]
 $, $l\in\Z$.
We will construct a Cantor subset $K$ of $K_{0}$ as $K=\bigcap_{k\geq k_{1}}\bigcup_{j\in I_{k}}J_{j}^{(k)}$ where the arcs $J_{j}^{(k)}$ have the form
\begin{eqnarray}
 \label{eq9} J_{j}^{(k)}=\left\{e^{i\theta } \textrm{ ; } \theta \in \left[-\frac{\theta _{k}}{n_{k}}+\frac{l_{j}^{(k)}\pi}{n_{k}},\frac{\theta _{k}}{n_{k}}+\frac{l_{j}^{(k)}\pi}{n_{k}}\right]\right\}
\end{eqnarray}
for some $l_{j}^{(k)}\in\Z$.
Observe that such arcs are disjoint as soon as $\frac{2\theta _{k}}{n_{k}}<\frac{\pi}{n_{k}}$, i.e. $\theta _{k}<\frac{\pi}{2}$, which is indeed the case, and that the arc corresponding to $l_{j}^{(k)}=0$ contains the point $1$ in its interior. There are $2n_{k}$ such intervals. We are going to construct by induction on $k$ a collection $(J_{j}^{(k)})_{j\in I_{k}}$ in such a way that each $J_{j}^{(k)}$ has the form  given in (\ref{eq9}) and is contained in an arc of the collection $(J_{j}^{(k-1)})_{j\in I_{k-1}}$ constructed at step $k-1$, and the collection $(J_{j}^{(k)})_{j\in I_{k}}$ contains the arc $[-\frac{\theta _{k}}{n_{k}},\frac{\theta _{k}}{n_{k}}]$ corresponding to the case $l=0$.
 We start for $k=k_{1}$ with the collection of all the $2n_{1}$ arcs above. Suppose that the arcs at step $k$ are constructed, and write one of them as
 $$J_{j}^{(k)}=\left[-\frac{\theta _{k}}{n_{k}}+\frac{l_{j}^{(k)}\pi}{n_{k}},\frac{\theta _{k}}{n_{k}}+\frac{l_{j}^{(k)}\pi}{n_{k}}\right].$$ Let us look for arcs of the form
 $$\left[-\frac{\theta _{k+1}}{n_{k+1}}+\frac{r\pi}{n_{k+1}},\frac{\theta _{k+1}}{n_{k+1}}+\frac{r\pi}{n_{k+1}}\right],\; r\in\Z,$$
 contained in $J_{j}^{(k)}$. There are
 $\lfloor \frac{1}{\pi} (\frac{n_{k+1}}{n_{k}}{\theta _{k}}-{\theta _{k+1}})\rfloor=p_{k+1}$ such intervals contained in
 $J_{j}^{(k)}$. By construction $p_{k+1}\geq \frac{1}{\pi} (4\pi-\frac{\pi}{2})-1 \geq 2$. Remark that in the case where $J_{j}^{(k)}$ is the arc $\{e^{i\theta }\textrm{ ; } \theta \in[-\frac{\theta _{k}}{n_{k}},\frac{\theta _{k}}{n_{k}}]\}$ we have in the collection $(J_{j}^{(k+1)})$ the arc
$\{e^{i\theta }\textrm{ ; } \theta \in[-\frac{\theta _{k+1}}{n_{k+1}},\frac{\theta _{k+1}}{n_{k+1}}]\}$ (which is indeed contained in the arc $\{e^{i\theta }\textrm{ ; } \theta \in[-\frac{\theta _{k}}{n_{k}},\frac{\theta _{k}}{n_{k}}]\}$). We obtain in this fashion a perfect  Cantor set $K$, which contains the point $1$ by construction, such that $\lambda ^{n_{k}}$ tends to $1$ uniformly on $K$ (as $|\lambda ^{n_{k}}-1|\leq 2\gamma _{k}$ for any $\lambda \in K$ and any $k\geq k_{1}$). Let $\varepsilon >0$. There exists an integer $\kappa $ such that for any $k\geq \kappa $ and any $\lambda \in K$, $|\lambda ^{n_{k}}-1|\leq \varepsilon $. Since $1$ belongs to $K$, the set $K_{\varepsilon }=\{\lambda \in K \textrm{ ; } |\lambda -1|\leq \varepsilon /n_{\kappa -1}\}$ is a compact perfect subset of $\T$, and for any $\lambda \in K_{\varepsilon }$ and any $0\leq k\leq \kappa -1$, $$|\lambda ^{n_{k}}-1|\leq \frac{\varepsilon }{n_{\kappa -1}}n_{k}\leq\varepsilon .$$ Hence $\sup_{k\geq 0}|\lambda ^{n_{k}}-1|\leq\varepsilon $ for any $\lambda \in K_{\varepsilon }$, and Proposition \ref{prop3} is proved.
\end{proof}

\begin{remark}\label{rem1}
We have shown at the end of the proof of Proposition \ref{prop3} that
if $K$ is a compact perfect subset of $\T$ such that $\lambda ^{n_{k}}$ tends to
$1$ uniformly on $K$, and if $K$ contains the point $1$, then for any
$\varepsilon>0$ there exists a $\lambda\in K\setminus\{1\}$ such that
$\sup_{k\geq 0}|\lambda ^{n_{k}}-1|\leq\varepsilon $. If we do not
suppose that $K$ contains the point $1$, the set
$\tilde{K}=\{\lambda \overline{\mu} \textrm{ ; } \lambda ,\mu \in K\}$ is compact, perfect, contains the point $1$, and $\lambda ^{n_{k}}$ still tends to $1$ uniformly on $\tilde{K}$. We thus have the following fact, which we record here for further use:

\begin{fact}\label{fact0}
The following assertions are equivalent:
\begin{itemize}
 \item[(i)] there exists a compact perfect subset $K$  of $\T$ such that $\lambda ^{n_{k}}$ tends to $1$ uniformly on $K$;

 \item[(ii)] there exists a compact perfect subset $K $ of $\T$ such that $\lambda ^{n_{k}}$ tends to $1$ uniformly on $K$, and for any $\varepsilon >0$ there exists a $\lambda \in K\setminus\{1\}$ such that $\sup_{k\geq 0}|\lambda ^{n_{k}}-1|\leq\varepsilon$.
\end{itemize}
\end{fact}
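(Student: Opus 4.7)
The implication (ii)$\Rightarrow$(i) is immediate from the definitions, so the content lies in (i)$\Rightarrow$(ii). The idea, already sketched in Remark \ref{rem1}, is to replace the given set $K$ by
$$\tilde{K}=\{\lambda\overline{\mu}\,:\,\lambda,\mu\in K\},$$
which is engineered to contain $1$ while retaining the other good properties of $K$.

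First I would verify the topological properties of $\tilde K$. It is compact as the continuous image of the compact set $K\times K$ under $(\lambda,\mu)\mapsto\lambda\overline{\mu}$, and it contains $1=\lambda\overline{\lambda}$ for any $\lambda\in K$. To check that $\tilde K$ is perfect, take $z=\lambda_{0}\overline{\mu_{0}}\in\tilde K$; since $K$ is perfect, there exists a sequence $\lambda_{n}\in K\setminus\{\lambda_{0}\}$ with $\lambda_{n}\to\lambda_{0}$, whence $\lambda_{n}\overline{\mu_{0}}$ is a sequence in $\tilde K\setminus\{z\}$ tending to $z$. The uniform convergence $\lambda^{n_{k}}\to 1$ on $\tilde K$ then follows from the triangle inequality
$$|\lambda^{n_{k}}\overline{\mu}^{n_{k}}-1|\leq|\lambda^{n_{k}}-1|+|\mu^{n_{k}}-1|$$
applied to any element $\lambda\overline{\mu}$ of $\tilde K$, combined with the uniform convergence on $K$.

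Next I would extract the additional condition required in (ii). Fix $\varepsilon>0$. By the uniform convergence just obtained, there exists $\kappa\geq 1$ such that $|\lambda^{n_{k}}-1|\leq\varepsilon$ for every $k\geq\kappa$ and every $\lambda\in\tilde K$. Since $1\in\tilde K$ is not isolated (by perfection), I can choose $\lambda\in\tilde K\setminus\{1\}$ with $|\lambda-1|\leq\varepsilon/n_{\kappa-1}$; then for every $0\leq k\leq\kappa-1$ the elementary bound $|\lambda^{n_{k}}-1|\leq n_{k}|\lambda-1|\leq\varepsilon$ holds, giving $\sup_{k\geq 0}|\lambda^{n_{k}}-1|\leq\varepsilon$. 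Thus $\tilde K$ witnesses assertion (ii).

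No step is a genuine obstacle: the statement is essentially a repackaging of the observation at the end of the proof of Proposition \ref{prop3}, the only new ingredients being the elementary verification that the symmetrized set $\tilde K$ remains compact and perfect, and the reminder that controlling the first finitely many terms $|\lambda^{n_{k}}-1|$ for $k<\kappa$ costs only making $|\lambda-1|$ small.
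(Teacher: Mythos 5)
Your proof is correct and follows essentially the same route as the paper: Remark \ref{rem1} handles (i)$\Rightarrow$(ii) precisely by passing to $\tilde{K}=\{\lambda\overline{\mu}\ ;\ \lambda,\mu\in K\}$ and then reusing the argument from the end of the proof of Proposition \ref{prop3}, namely choosing $\lambda$ close to $1$ so that the bound $|\lambda^{n_k}-1|\le n_k|\lambda-1|$ controls the finitely many indices $k<\kappa$. Your explicit verifications that $\tilde{K}$ is compact and perfect are exactly the details the paper leaves implicit.
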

\end{remark}

Our next examples concern sequences $(n_{k})_{k\geq 0}$ such that $n_{k}$ divides $n_{k+1}$ for any $k\geq 0$ (we write this as $n_{k}|n_{k+1}$). We begin with the case where $\limsup_{k\to\infty } {n_{k+1}}/{n_{k}}=+\infty $, since in this case we can derive a stronger conclusion. Recall \cite{BaG1} that such sequences are Jamison sequences.

\begin{proposition}\label{prop22}
 Let $(n_{k})_{k\geq 0}$ be a sequence such that $n_{k}|n_{k+1}$ for any $k\geq 0$ and $\limsup_{k\to\infty } {n_{k+1}}/{n_{k}}=+\infty $. There exists a compact perfect subset $K$ of $\T$ containing the point $1$ such that $\lambda ^{n_{k}}\to 1$ uniformly on $K$.
\end{proposition}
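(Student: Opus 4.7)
The plan is to construct $K$ as a Cantor-type set of points $e^{2\pi i\theta}$ whose argument $\theta$ admits a mixed-radix expansion aligned with a carefully chosen subsequence of $(n_{k})$, exploiting both the divisibility and the $\limsup$-hypothesis. Since $(n_{k})$ is strictly increasing (so $n_{k}\to+\infty$) and $\limsup_{k} n_{k+1}/n_{k}=+\infty$, I inductively pick $k_{0}<k_{1}<\cdots$ with $n_{k_{j}+1}/n_{k_{j}}\geq j$. Setting $N_{j}:=n_{k_{j}+1}$, divisibility in the original sequence forces $N_{j}\mid N_{j+1}$; and since
$$\frac{N_{j+1}}{N_{j}}\ \geq\ \frac{n_{k_{j+1}+1}}{n_{k_{j+1}}}\ \longrightarrow\ +\infty,$$
one also has $N_{j+1}/N_{j}\to+\infty$, so $N_{j+1}/N_{j}\geq 2$ from some index on.

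I then set
$$K:=\Bigl\{\,e^{2\pi i\theta}\ :\ \theta=\sum_{j\geq 0}\frac{\epsilon_{j}}{N_{j}},\ (\epsilon_{j})_{j\geq 0}\in\{0,1\}^{\N}\,\Bigr\}.$$
The point $1$ lies in $K$ (take $\epsilon_{j}=0$ for all $j$), and $K$ is compact as the continuous image of the Cantor space $\{0,1\}^{\N}$. Given $\lambda=e^{2\pi i\theta}\in K$, toggling the single coefficient $\epsilon_{m}$ produces a point of $K$ whose argument differs from $\theta$ by $\pm 1/N_{m}$: this is nonzero and tends to $0$, so $\lambda$ is not isolated in $K$, and $K$ is perfect.

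The crux is uniform convergence $\lambda^{n_{k}}\to 1$ on $K$. Fix $k$ large and let $l=l(k)$ be the unique index with $k_{l}+1\leq k\leq k_{l+1}$. From $k_{l}+1\leq k\leq k_{l+1}<k_{l+1}+1$ and the divisibility in the original sequence one extracts the sandwich $N_{l}\mid n_{k}\mid N_{l+1}$. Consequently $n_{k}/N_{j}$ is a positive integer when $j\leq l$, and a unit fraction $1/(N_{j}/n_{k})$ when $j\geq l+1$, so for every $\lambda=e^{2\pi i\theta}\in K$,
$$n_{k}\theta\,\equiv\,\sum_{j\geq l+1}\epsilon_{j}\,\frac{n_{k}}{N_{j}}\pmod{1},$$
and the right-hand side is non-negative and at most
$$\frac{n_{k}}{N_{l+1}}+n_{k}\sum_{j\geq l+2}\frac{1}{N_{j}}\ \leq\ \frac{n_{k_{l+1}}}{n_{k_{l+1}+1}}+2\,\frac{N_{l+1}}{N_{l+2}},$$
using $n_{k}\leq n_{k_{l+1}}\leq N_{l+1}$ and $\sum_{j\geq l+2}1/N_{j}\leq 2/N_{l+2}$ (geometric bound from $N_{j+1}/N_{j}\geq 2$). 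Both terms tend to $0$ as $l\to+\infty$ by the extraction, so $n_{k}\theta\bmod 1\to 0$ uniformly in $\lambda\in K$ and in $k$ within the block $[k_{l}+1,k_{l+1}]$, yielding $\lambda^{n_{k}}\to 1$ uniformly on $K$. The main obstacle is precisely ensuring this uniform control for \emph{every} $k$, not just the subsequence $(k_{j}+1)$: a naive adaptation of Proposition \ref{prop3} applied to $(N_{j})$ would only handle those indices, and without the $\limsup$-hypothesis (for instance $n_{k}=2^{k}$) even the pointwise set $\{\lambda:\lambda^{n_{k}}\to 1\}$ is countable, ruling out any perfect $K$. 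The block decomposition together with the sandwich $N_{l}\mid n_{k}\mid N_{l+1}$ is the mechanism that channels each intermediate $k$ into an estimate governed by the ``big jump'' $n_{k_{l+1}+1}/n_{k_{l+1}}\to+\infty$, which is exactly what the $\limsup$-hypothesis provides.
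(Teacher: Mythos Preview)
Your proof is correct and follows essentially the same approach as the paper's: both select a subsequence $(M_j)$ of $(n_k)$ at indices where consecutive ratios blow up, build $K$ as the Cantor-type set $\{e^{2\pi i\sum_j \epsilon_j/M_j}:\epsilon\in\{0,1\}^{\N}\}$, and for a general $k$ lying in the block between two selected indices use divisibility to kill the initial terms of $n_k\theta$ modulo $1$ and bound the tail by a quantity controlled by the big ratio at the next selected index. Your $N_j=n_{k_j+1}$ plays exactly the role of the paper's $n_{k_p}$ (up to a harmless shift of one in the indexing), and your sandwich $N_l\mid n_k\mid N_{l+1}$ together with the tail estimate $\sum_{j\geq l+2}1/N_j\le 2/N_{l+2}$ reproduces the paper's bound $|e^{2i\pi n_k\theta_\varepsilon}-1|\le 4\pi\, n_{k_{p+1}-1}/n_{k_{p+1}}$ almost verbatim.
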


\begin{proof}
 Since $n_{k}|n_{k+1}$ for any $k\geq 0$, we have $n_{k+1}\geq 2n_{k}$, so that $$\sum_{k\geq 1}\frac{1}{n_{k}}\leq 1 \quad \textrm{and}\quad \sum_{j\geq k+1}\frac{1}{n_{j}}\leq \frac{2}{n_{k+1}} \quad \textrm{for any}\quad  k\geq 1.$$ Let $(k_{p})_{p\geq 1} $ be a strictly increasing sequence of integers such that $\frac{n_{k_{p}}}{n_{k_{p}-1}}\to +\infty $ as $k_{p}\to\infty $. For any sequence $\varepsilon \in\{0,1\}^{\N}$ of zeroes and ones, $\varepsilon =(\varepsilon _{p})_{p\geq 1}$, consider the real number of $[0,1]$
 $$\theta _{\varepsilon }=\sum_{p\geq 1}\frac{\varepsilon _{p}}{n_{k_{p}}}, \quad \textrm{ and } \quad \lambda _{\varepsilon }=e^{2i\pi\theta _{\varepsilon }}\in\T.$$
 The set $K=\{\lambda _{\varepsilon } \textrm{ ; }\varepsilon \in \{0,1\}^{\N}\}$ is compact, perfect, and contains the point $1$. Let us now show that $\lambda _{\varepsilon }^{n_{k}}$ tends to $1$ uniformly \wrt\ $\varepsilon \in\{0,1\}^{\N}$. Fix $\delta >0$, and let $p_{0}\geq 1$ be such that for any $p\geq p_{0}$, $\frac{n_{k_{p}-1}}{n_{k_{p}}}<\frac{\delta }{4\pi}$. 
 Let $k\geq k_{p_{0}}$, and $\varepsilon \in\{0,1\}^{\N}$. There exists a $p\geq p_{0}$ such that $n_{k_{p}}\leq n_{k}\leq n_{k_{p+1}-1}$. We have $$n_{k}\theta _{\varepsilon }=n_{k}\sum_{j=1}^{p}\frac{\varepsilon _{j}}{n_{k_{j}}}+n_{k}\sum_{j\geq p+1}\frac{\varepsilon_{j}}{n_{k_{j}}}\cdot$$
 Since $n_{k_{j}}|n_{k}$ for any $j=1,\ldots,p$, $n_{k}\sum_{j=1}^{p}\frac{\varepsilon _{j}}{n_{k_{j}}}$ belongs to $\Z$. Hence
 $$|e^{2i\pi n_{k}\theta _{\varepsilon }}-1|\leq 2\pi\,n_{k}\sum_{j\geq p+1}\frac{1}{n_{k_{j}}}\leq
 2\pi\,n_{k}\sum_{j\geq k_{p+1}}\frac{1}{n_{{j}}}\leq
 4\pi\,\frac{n_{k}}{n_{k_{p+1}}}\leq 4\pi\,\frac{n_{k_{p+1}-1}}{n_{k_{p+1}}}<\delta ,$$ so
 $|\lambda _{\varepsilon }^{n_{k}}-1|<\delta $ for any $k\geq k_{p_{0}}$ and $\varepsilon \in\{0,1\}^{\N}$. This proves our statement.
\end{proof}

Let us now move over to the case where $n_{k}|n_{k+1}$ for every $k\geq 0$, but where ${n_{k+1}}/{n_{k}}$ is possibly bounded: for instance $n_{k}=2^{k}$ for any $k\geq 0$. Is $(n_{k})_{k\geq 0}$ a \rs? Somewhat surprisingly, the answer is yes. This was kindly shown to us by Jean-Pierre Kahane, who proved the following proposition:

\begin{proposition}\label{prop33}
 Let $(n_{k})_{k\geq 0}$ be a sequence such that  $n_{k}|n_{k+1}$ for every $k\geq 0$. Then $(n_{k})_{k\geq 0}$ is a \rs.
\end{proposition}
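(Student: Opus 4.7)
By Proposition \ref{prop4}, it is enough to construct a continuous probability measure $\sigma$ on $\T$ such that $\widehat\sigma(n_k)\to 1$. Since Proposition \ref{prop22} already handles the case $\limsup n_{k+1}/n_k=+\infty$, the essential new content is the bounded-ratio regime, the model being $n_k=2^k$ for which the Cantor construction of Proposition \ref{prop22} breaks down. I would produce a single $\sigma$ uniformly for all such divisible sequences, via an infinite convolution matched to the natural tree structure of $(n_k)$.

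Set $N_k=n_{k+1}/n_k\geq 2$, and let $\sigma$ be the law on $\T=\R/\Z$ of
\[
\theta=\sum_{j\geq 1}\frac{\epsilon_j}{n_j},
\]
the $\epsilon_j$ being independent integer-valued random variables with $P(\epsilon_j=0)=1-p_j$ and $P(\epsilon_j=i)=p_j/(N_{j-1}-1)$ for $i=1,\dots,N_{j-1}-1$, where $p_j=1/(j+1)$; equivalently, $\sigma=\mu_1\ast\mu_2\ast\cdots$ for the obvious discrete probability measures $\mu_j$ on $\T$. Continuity of $\sigma$ comes from the fact that any expansion $(v_j)$ of a given $\theta_0\in\T$ of this form satisfies
\[
\prod_j P(\epsilon_j=v_j)\leq\prod_j\max_i P(\epsilon_j=i)\leq\prod_j(1-p_j)=\prod_j\frac{j}{j+1}=0
\]
by telescoping, and any such $\theta_0$ has only finitely many expansions.

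For the Fourier coefficients, the divisibility $n_j\mid n_k$ for $j\leq k$ makes every possible value of $n_k\epsilon_j/n_j$ an integer, so $\widehat{\mu_j}(n_k)=1$ for $j\leq k$ and only $j>k$ contributes: $\widehat\sigma(n_k)=\prod_{j>k}\widehat{\mu_j}(n_k)$. A direct geometric-sum calculation gives $\widehat{\mu_{k+1}}(n_k)=1-p_{k+1}N_k/(N_k-1)$, and hence $|\widehat{\mu_{k+1}}(n_k)-1|\leq 2p_{k+1}$. For $j\geq k+2$, writing $\widehat{\mu_j}(n_k)$ in closed form as a ratio of two sines with arguments $\pi x$ and $\pi y$ where $x=n_k/n_{j-1}\leq 1/N_k\leq 1/2$ and $y=x/N_{j-1}$, and Taylor-expanding, one obtains $|\widehat{\mu_j}(n_k)-1|=O(p_j\,x)$. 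Using the geometric decay $x\leq 2^{-(j-1-k)}$ forced by $N_\ell\geq 2$, the telescoping estimate
\[
|\widehat\sigma(n_k)-1|\leq 2p_{k+1}+C\sum_{m\geq 2}\frac{p_{k+m}}{2^{m-1}}=O(1/k)
\]
yields $\widehat\sigma(n_k)\to 1$.

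The main point requiring care is the tension in choosing $(p_j)$: smallness $p_j\to 0$ is needed to force each factor $\widehat{\mu_j}(n_k)$ to approach $1$, while $\sum p_j=+\infty$ is needed to rule out atoms of $\sigma$; the harmonic-type choice $p_j=1/(j+1)$ reconciles both. The uniform geometric decay $N_\ell\geq 2$ provided by the divisibility assumption is what makes the tail $\sum_{j\geq k+2}$ in the final estimate dominated by the single adjacent term at $j=k+1$; without such decay the argument would fail.
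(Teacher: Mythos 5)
Your construction is essentially the paper's: an infinite convolution of atomic measures concentrated at the scales $1/n_j$ with weights $p_j\to 0$ and $\sum p_j=+\infty$, continuity obtained from the divergence of $\sum p_j$, and the Fourier estimate $|\hat{\sigma}(n_k)-1|\leq\sum_{j>k}|\hat{\mu}_j(n_k)-1|$ in which divisibility makes the factors with $j\leq k$ equal to $1$ and the forced geometric growth $n_{j+1}\geq 2n_j$ controls the tail; the paper does exactly this with Bernoulli factors $(1-a_j)\delta_0+a_j\delta_{1/n_j}$ and a decreasing $(a_j)$ with divergent sum. The only differences are cosmetic (full mixed-radix digits instead of $\{0,1\}$ digits, and a hands-on continuity argument instead of citing L\'evy's theorem via \cite{GK}), and your argument is correct.
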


This proposition is also proved in the preprint \cite{preprint}.

\begin{proof}
 Let $(a_{k})_{k\geq 1}$ be a decreasing sequence of positive numbers going to $0$ as $k$ goes to infinity, with $a_k<1$ for every $k\geq 1$, such that the series $\sum_{k\geq 1}a_{k}$ is divergent. Consider the infinite convolution of Bernoulli measures defined on $[0,2\pi]$ by
 $$\mu=\ast_{j\geq 1} ((1-a_{j})\delta _{0}+a_{j}\delta _{\frac{1}{n_{j}}}),$$ where $\delta _{a}$ denotes the Dirac measure at the point $a$ for any $a\in [0,2\pi]$.
 Clearly $\mu $ is a \proba\ \mea\ on $[0,2\pi]$ which is continuous. Indeed $\mu $ is the distribution of the random variable
 $$\xi=\sum_{j\geq 1} \frac{\varepsilon _{j}}{n_{j}},$$ where $(\varepsilon _{j})_{j\geq 1}$ is a sequence of independent Bernoulli random variables taking values $0$ and $1$ with probabilities $p_{0j}=1-a_{j}$ and $p_{1j}=a_{j}$ respectively.
Since $\sum a_j=+\infty$, the \mea\ $\mu$ is continuous by a result of L\'evy (see \cite{GK} for a simple proof).
 It thus remains to prove that
 $\hat{\mu}(n_{k})\to 1 $ as $n_{k}\to +\infty $. Since $n_{j}|n_{j+1}$ for each $j\geq 0$,
 $$\hat{\mu }(n_{k})=\prod_{j\geq k+1}(1-a_{j}+a_{j}e^{2i\pi \frac{n_{k}}{n_{j}}})=
 \prod_{j\geq k+1}(1-a_{j}(1-e^{2i\pi\frac{n_{k}}{n_{j}}})).$$
Recall now the following easy fact: for any $N\geq 1$ and any complex numbers $x_j$ with $|x_j|\le 1$ for every $j=1,\ldots,N$, we have $|\prod_{j=1}^N x_j-1|\le\sum_{j=1}^N|x_j-1|.$ Since for any $j\geq k+1$,
$$|1-a_{j}(1-e^{2i\pi \frac{n_{k}}{n_{j}}})|=|1-a_j+a_{j}e^{2i\pi \frac{n_{k}}{n_{j}}}|\le 1-a_j+a_j=1,$$ we get that
 $$|\hat{\mu}(n_{k})-1|\leq \sum_{j\geq k+1}a_{j}|1-e^{2i\pi\frac{n_{k}}{n_{j}}}|\leq 2\pi\, a_{k+1} \sum_{j\geq k+1}
 \frac{n_{k}}{n_{j}}\leq 4\pi\, a_{k+1}$$ since the sequence $(a_{j})_{j\geq 1}$ is decreasing and $n_{k}\sum_{j\geq k+1}
 \frac{1}{n_{j}}\leq n_{k+1}\sum_{j\geq k+1}
 \frac{1}{n_{j}}\leq 2$, as seen in Proposition \ref{prop22} above. Hence $\hat{\mu}(n_{k})\to 1 $, and this proves Proposition \ref{prop33}.
 \end{proof}

\begin{remark}\label{rem3}
Remark that if $n_{k}=2^{k}$ for instance, the only $\lambda$'s in $\T$  such that $\lambda ^{n_{k}}$ tends to $1$
are the ${2^{k}}^{th}$ roots of $1$. More generally, it is not difficult to see that if $n_{k}|n_{k+1}$ and $\sup n_{k+1}/n_k$ is finite, $\lambda ^{n_{k}} \to 1$ \ifff\ there exists a $k_{0}$ such that $\lambda ^{n_{k_{0}}}=1$.
\end{remark}

\begin{remark}\label{rem33}
 The proof of Proposition \ref{prop33} yields a bit more, namely that given any sequence $(a_{k})_{k\geq 0}$  of positive numbers decreasing to zero and such that the series $\sum a_{k}$ diverges, there exists a continuous probability \mea\ $\sigma  $ on $\T$ such that $|\hat{\sigma  }({n_{k}})-1|\le a_{k}$ for every $k\geq 0$. This will turn out to be crucial in the proof of the statement of Example \ref{ex77}. In general one cannot obtain such a \mea\ $\sigma  $ with $\sum |\hat{\sigma  }({n_{k}})-1|<+\infty $: this would imply that the series $\sum|\lambda^{{n_{k}}}-1|$ converges $\sigma  $-a.e., so that $|\lambda^{{n_{k}}}-1|\to 0$ $\sigma  $-a.e., and we have seen in Remark \ref{rem3} above that this is impossible if $n_{k+1}/n_{k}$ is bounded for instance.
\end{remark}

The proof of Proposition \ref{prop33} uses in a crucial way the divisibility assumption on the $n_{k}$'s, and it comes as a natural question to ask whether it can be dispensed with:
if there exists an $a>1$ such that ${n_{k+1}}/{n_{k}}\geq a$ for any $k\geq 0$,
must $(n_{k})_{k\geq 0}$ be a \rs?
We were not able to settle this question, but it is answered in \cite{preprint} in the negative: the sequence $(n_{k})_{k\geq 0}$ with $n_{k}=2^{k}+1$ cannot be a \rs. Indeed we have $2n_{k}=n_{k+1}+1$, so that if $(n_{k})_{k\geq 0}$ were a \rs, with $\varphi $ an associated \wmx\ \mpt\ on $(X, \mathcal{F}, \mu )$, we should have both $U_{\varphi }^{2n_{k}}\to I$ (SOT) and $U_{\varphi }^{n_{k}+1}\to I$ (SOT), so that $U_{\varphi }=I$ which is impossible.
\par\smallskip
Obviously a \rs\ must have density $0$ (this is pointed out already in \cite{JKLSS}).
Some of the simplest examples of non-\rs s $(n_{k})_{k\geq 0}$ satisfy $n_{k+1}/n_{k} \to 1$. Our three Examples \ref{ex44}, \ref{ex5} and \ref{ex55} overlap with examples of \cite{preprint}.

\begin{example}\label{ex44}
 Let $p\in\Z[X]$ be a polynomial with nonnegative coefficients,
 $p\not = 0$. Then the sequence $(n_{k})_{k\geq 0}$ with $n_{k}=p(k)$ cannot be a \rs.
\end{example}

This follows directly from Weyl's polynomial equidistribution theorem (see for instance \cite[p. 27]{KN}): for any irrational number $\theta \in [0,1]$, the sequence $(p(k)\theta )_{k\geq 0}$ is uniformly equidistributed. Hence
$$\frac{1}{N}\sum_{k=1}^{N}e^{2i\pi p(k)\theta }\to 0 \quad \textrm{as } N\to +\infty $$ for every $\theta \in [0,1]\setminus \Q$. Hence if $\sigma  $ is any continuous probability measure on $\T$,
$$\frac{1}{N}\sum_{k=1}^{N} \hat{\sigma  }(n_{k})\to 0,$$ and this forbids $\hat{\sigma  }(n_{k})$ to tend to $1$. We have proved in fact:

\begin{example}\label{ex5}
 If there exists a countable subset $Q$ of $[0,1]$ and a $\delta >0$ such that for any $\theta \in [0,1]\setminus Q$,
 $$\liminf_{N\to +\infty }|\frac{1}{N}\sum_{k=1}^{N}e^{2i\pi n_{k}\theta }|\leq 1-\delta ,$$ then
 $(n_{k})_{k\geq 0}$ is not a \rs.
\end{example}

See \cite{BaG1} for some examples of such sequences. Let us point out that (contrary to what happens for Jamison sequences), it is obvious to exhibit non-rigidity sequences  $(n_{k})_{k\geq 0}$ with $\liminf n_{k+1}/n_{k}=1$ and $\limsup n_{k+1}/n_{k}=+\infty $: take any sequence  $(n_{2k})_{k\geq 0}$ such that  $n_{2k+2}/n_{2k}\to +\infty $, and set $n_{2k+1}=n_{2k}+1$. If $(n_{k})_{k\geq 0}$ were a \rs, with $\varphi $ an associated \wmx\ \mpt\ on $(X, \mathcal{F}, \mu )$, we should have $U_{\varphi }^{n_{k}}\to I$ (SOT), so that $U_{\varphi }=I$, a contradiction.
A similar type of argument yields

\begin{example}\label{ex55}
If $(n_{k})_{k\geq 0}$ denotes the sequence of prime numbers, then $(n_{k})_{k\ge 0}$ is not a \rs.
\end{example}

\begin{proof}
This follows from a result of Vinogradov that any sufficiently large odd number can be written as a sum of three primes. Suppose by contradiction that $(n_{k})_{k \geq 0}$ is a \rs\ with $\varphi $ an associated \wmx\ \mpt\ on $(X, \mathcal{F}, \mu )$. Then $U_{\varphi }^{n_{k}}\to I$ (SOT). Let $f\neq 0$ be a function in $L^{2}(X,\mathcal{F}, \mu )$ with $\int_X f d\mu =0$. If $\varepsilon >0$ is any positive number, let $k_{0}$ be such that for any $k\geq k_{0}$, $||U_{\varphi }^{n_{k}}f-f||<\varepsilon $ and every odd integer greater than or equal to $k_0$ can be written as a sum of three primes. Consider the finite set of integers $A=\{0, n_{k_{1}}, n_{k_{1}}+n_{k_{2}}, n_{k_{1}}+n_{k_{2}}+n_{k_{3}} \textrm{ ; } 0\le k_{i}\le k_{0} \textrm{ for } i=1,2,3\}$. We claim that for any odd integer $2n+1\geq k_0$, there exists an $m\in A$ such that $||U_{\varphi }^{2n+1}f-U_{\varphi }^{m}f||<3\varepsilon $. Indeed, let us write $2n+1$ as $2n+1=n_{k_{1}}+n_{k_{2}}+n_{k_{3}}$ with $0
 \le k_{1}\le k_{2}\le k_{3}$, and consider separately four cases:

 -- if $k_{1}>k_{0}$, then $||U_{\varphi }^{n_{k_{1}}+n_{k_{2}}+n_{k_{3}}}f-f||\le
 ||U_{\varphi }^{n_{k_{1}}}f-f||+||U_{\varphi }^{n_{k_{2}}}f-f||+||U_{\varphi }^{n_{k_{3}}}f-f||<3\varepsilon
 $;

 -- if $k_{1}\le k_{0}$ and $k_{2}>k_{0}$, $||U_{\varphi }^{n_{k_{1}}+n_{k_{2}}+n_{k_{3}}}f-U_{\varphi }^{n_{k_{1}}}f||\le ||U_{\varphi }^{n_{k_{2}}}f-f||+||U_{\varphi }^{n_{k_{3}}}f-f||<2\varepsilon
 $;

 -- if $k_{2}\le k_{0}$ and $k_{3}>k_{0}$, $||U_{\varphi }^{n_{k_{1}}+n_{k_{2}}+n_{k_{3}}}f-U_{\varphi }^{n_{k_{1}}+n_{k_{2}}}f||\le ||U_{\varphi }^{n_{k_{3}}}f-f||<\varepsilon
 $;

 -- if $k_{3}\le k_{0}$, there is nothing to prove.

 Now since $\varphi$ is weakly mixing, $U_{\varphi }^{2n+1}f\to 0$ (WOT) along a set $D$ which is of density $1$ in the set of odd integers. Since $A$ is finite,
it follows that there exists some $m\in A$ such that $\|U_{\varphi }^{l_j}f - U_{\varphi }^m f\|<3\eps$ for an increasing sequence $(l_j)_{j\geq 0}\subset D$. Thus, for every $g\in L^{2}(X,\mathcal{F}, \mu )$ we have
$$
|\langle U_{\varphi }^mf,g \rangle|\leq |\langle U_{\varphi }^mf - U_{\varphi }^{l_j}f,g \rangle| + |\langle U_{\varphi }^{l_j}f,g \rangle| \leq 3\eps \|g\| + |\langle U_{\varphi }^{l_j}f,g \rangle|.
$$
Taking the weak limit as $j\to \infty$ of the above expression implies $\|U_{\varphi }^mf\|\leq 3\eps$. Thus $f=0$, a contradiction.
\end{proof}

The proof of Example \ref{ex55} actually shows that if there exists an integer $r\geq 2$ such that any sufficiently large integer
in a set of positive density
can be written as a sum of $r$ elements of the set $\{n_{k} \textrm{ ; }k\geq 0\}$, then $(n_{k})_{k\geq 0} $ cannot be a \rs.
As pointed out in \cite{preprint}, the statement of Example \ref{ex55} can also be deduced from the fact that $(n_{k}x)_{k\geq 0}$ is uniformly distributed for all but a countable set of values of $x\in [0,1]$.
\par\smallskip
We finish this section with some more examples of \rs s. We consider
the sequence $(q_{n})_{n\geq 1}$ of quotients of the convergents of some irrational numbers $\alpha \in ]0,1[$. Let $\alpha $ be such  a number, and let $$\alpha =\cfrac{1}{a_{1}+\cfrac{1}{a_{2}+\cfrac{1}{a_{3}+\dots}}}
$$
with the $a_{n}$'s positive integers, be its continued fraction expansion. The convergents of $\alpha $ are the rational numbers $\frac{p_{n}}{q_{n}}$ defined recursively by the equations
\begin{equation*}
\begin{cases}
 p_{0}=0,\, p_{1}=1, \,p_{n+1}=a_{n}p_{n}+p_{n-1} \textrm{ for } n\geq 2\\
 q_{0}=1,\, q_{1}=a_1, \,q_{n+1}=a_{n}q_{n}+q_{n-1} \textrm{ for } n\geq 2.
\end{cases}
\end{equation*}
See for instance \cite{HW} for more about continued fraction expansions and approximations of irrational numbers by rationals.
We have
\begin{eqnarray}
 \label{eq10}\frac{1}{2q_{n}q_{n+1}}\leq\left|\alpha -\frac{p_{n}}{q_{n}}\right|<\frac{1}{q_{n}q_{n+1}}
\end{eqnarray}
for any $n\geq 1$. It follows that $|e^{2i\pi q_{n}\alpha }-1|\to 0$  as $n\to +\infty $. Hence there exist infinitely many numbers $\lambda \in\T\setminus\{1\}$ such that $|\lambda ^{q_{n}}-1|\to 0$ as $n\to +\infty $, and the sequence $(q_{n})_{n\geq 1}$ is a possible candidate for a \rs. We begin by recalling a particular case of a result of Katok and Stepin \cite{KS}, see also \cite{Queff}:

\begin{example}\label{ex66}
If, with the notation above, $$|\alpha -\frac{p_{n}}{q_{n}}|=o(\frac{1}{q_{n}^{2}}),$$ then $(q_{n})_{n\geq 0}$  is a \rs.
\end{example}

This can also be seen as a direct consequence of our Example \ref{ex1}: by the lower bound in (\ref{eq10}), the assumption is equivalent to ${q_{n+1}}/{q_{n}}\to +\infty $ (i.e. $a_{n}\to +\infty $). It is also possible to show that $(q_{n})_{n\geq 0}$  is a \rs\ (and even more) for some irrational numbers $\alpha $ with $\liminf a_{n}<+\infty $. For instance:

\begin{example}\label{ex2}
 Let $m\geq 2$ be an integer, and let $\alpha _{m}$ be the Liouville number $$\alpha _{m}=\sum_{k\geq 0}m^{-(k+1)!}.$$ If $(q_{n})_{n\geq 1}$ denotes the sequence of denominators of the convergents of $\alpha _{m}$, then
  there exists a perfect compact subset of $\T$ on which $\lambda ^{q_{n}}$ tends uniformly to $1$. In particular
$(q_{n})_{n\geq 1}$ is a \rs.
\end{example}

\begin{proof}
The proof relies on a paper of Shallit \cite{Sha} where the continued fraction expansion of $\alpha _{m}$  is determined: if $[a_{0}, a_{1},\ldots, a_{N_{v}}]$ is the continued fraction expansion of $\sum_{k= 0}^{v} m^{-(k+1)!}$, $v$ a nonnegative integer, then the continued fraction expansion of the next partial sum $\sum_{k= 0}^{v+1} m^{-(k+1)!}$ is given by $$[a_{0}, a_{1},\ldots, a_{N_{v+1}}]=[a_{0}, a_{1},\ldots, a_{N_{v}}, m^{v(v+1)!}-1,1,a_{N_{v}}-1,a_{N_{v}-1},\ldots, a_{2},a_{1}]$$ as soon as $N_{v}$ is even. One has $N_{v+1}=2N_{v}+2$ so that $N_{v+1}$ is indeed even. This yields that the continued fraction expansion of $\alpha _{m}$ is
$$[0,m-1,m+1,m^{2}-1,1,m,m-1,m^{12}-1,1,m-2,m,1,m^{2}-1,m+1,m-1,m^{72}-1,1,\ldots].$$ We have $a_{N_{v}+1}=m^{(v-1)v!}-1$.
For any $v\geq 0$, $$\frac{q_{N_{v}}+2}{q_{N_{v}}+1}=m^{(v-1)v!}-1+\frac{q_{N_{v}}}{q_{N_{v}}+1}\geq m^{(v-1)v!}-1
\geq \frac{1}{2}m^{(v-1)v!}\quad \textrm{for}\quad  v\geq 2.$$
Applying the proof of Proposition \ref{prop3} to the sequence $(n_{v})_{v\geq 0}=(q_{N_{v}+1})_{v\geq 0}$, we get that there exists a  perfect compact subset $K$ of $\T$ containing the point $1$ such that for any $\lambda \in K$ and any $v\geq 1$,
$$|\lambda ^{q_{N_{v}+1}}-1|\leq 10\pi\,\sup_{j\geq v}\frac{q_{N_{j-1}+1}}{q_{N_{j}+1}}\leq
10\pi\, \frac{q_{N_{v}+1}}{q_{N_{v+1}+1}}\leq 10\pi\,\frac{q_{N_{v}+2}}{q_{N_{v}+1}}\leq 20\pi\,
m^{-(v-1)v!}. $$ Let now $p$ be an integer such that $N_{v-1}+2\leq p\leq N_{v}$ for some $v\geq 0$, and $\lambda \in K$. We need to estimate $|\lambda ^{p}-1|$. If $p=N_{v-1}+2$, we have $q_{N_{v-1}+2}=a_{N_{v-1}+2}\,q_{N_{v-1}+1}+q_{N_{v-1}}\leq (a_{N_{v-1}+2}+1)q_{N_{v-1}+1}$. In the same way
$q_{N_{v-1}+3}\leq (a_{N_{v-1}+2}+1)(a_{N_{v-1}+3}+1)q_{N_{v-1}+1}$
etc., and
$$q_{N_{v-1}+j}\leq\prod_{i=2}^{j}(a_{N_{v-1}+i}+1) \,q_{N_{v-1}+1}\quad \textrm{ for any } 2\leq j\leq N_{v}-N_{v-1}=N_{v-1}+2. $$ So for $N_{v-1}+2\leq p\leq N_{v}$ we have
$$q_{p}\leq\prod_{i=2}^{p-N_{v-1}}(a_{N_{v-1}+i}+1) \,q_{N_{v-1}+1}$$ so that
$$|\lambda ^{q_{p}}-1|\leq \prod_{i=2}^{N_{v-1}+2}(a_{N_{v-1}+i}+1)\, |\lambda ^{q_{N_{v-1}+1}}-1| .$$ It remains to estimate the quantity $\prod_{i=2}^{N_{v-1}+2}(a_{N_{v-1}+i}+1)$. We have
$\{a_{N_{v-1}+2}, \ldots, a_{N_{v}}\}=\{1, a_{N_{v-1}}-1, a_{N_{v-1}-1}, \ldots, a_{2},a_{1}\}$ so that
$\prod_{i=2}^{N_{v-1}+2}(a_{N_{v-1}+i}+1)\leq 2 \prod_{i=2}^{N_{v-1}}(a_{i}+1)$. Let us write
$R_{v-1}=\prod_{i=2}^{N_{v-1}}(a_{i}+1)$. We have $R_{v}\leq R_{v-1}\, m^{(v-2)(v-1)!}\, 2\, R_{v-1}$ by the inequality above, i.e.
\begin{eqnarray*}
R_{v}\leq 2\, R_{v-1}^{2}m^{(v-2)(v-1)!} &\leq& 2^{1+2} R_{v-2}^{4} m^{(v-2)(v-1)!+2(v-3)(v-2)!}\\
&\leq& \ldots\leq 2^{2^{v+1}}
m^{\sum_{k=1}^{v-1}2^{k-1}(v-(k+1))(v-k)!}.
\end{eqnarray*}
Now $(v-1)!\geq 2^{k-1}(v-k)!$, so
$$R_{v}\leq 2^{2^{v+1}}m^{(v-1)(v-2)(v-1)!}.$$ Hence
$$|\lambda ^{q_{p}}-1|\leq 2^{2^{v+1}}m^{(v-1)(v-2)(v-1)!} |\lambda ^{q_{N_{v-1}+1}}-1| $$
for any $N_{v-1}+2\leq p\leq N_{v}$. Now we have $$|\lambda ^{q_{N_{v-1}+1}}-1|\leq 20\pi\, m^{-(v-1)v!}$$ so that
$$|\lambda ^{q_{p}}-1|\leq 20\pi\, 2^{2^{v+2}}m^{(v-1)(v-1)! (v-2-v)} =20\pi\, 2^{2^{v+2}}m^{-2(v-1)(v-1)!}$$ for $N_{v-1}+2\leq p\leq N_{v}$.
 Since the quantity $2^{2^{v+2}}m^{-2(v-1)(v-1)!}$ tends to $0$ as $v$ tends to infinity, it follows that $\lambda ^{q_{n}}$ tends to $1$ uniformly on $K$ as $n$ tends to infinity.
\end{proof}

A stronger result is proved in \cite{preprint}: actually if $\alpha $ is any irrational number in $]0,1[$, the sequence $(q_{n})_{n\geq 0}$ of denominators of the convergents of $\alpha $ is always a \rs.
\par\smallskip
We finish our study of \rs s by giving an example of a \rs\ such that $n_{k+1}/n_{k}\to 1$. This answers a question of \cite{preprint}.

\begin{example}\label{ex77}
There exists a sequence $(n_{k})_{k\geq 0}$ with $n_{k+1}/n_{k}\to 1$ as $k\to +\infty $ which is a \rs.
\end{example}

\begin{proof}
Let $(k_{p})_{p\geq 2}$ be a very quickly increasing sequence of integers with $k_1=1$ which will be determined later on in the proof. For $p\geq 0$, let $N_{p}=2^{2^{p}}$, and consider the set $$A_{N_{p}}=\bigcup_{k=k_{p+1}}^{2k_{p+2}-1}A_{N_p,k}$$ where
$$A_{N_{p},k}=\{N_{p}^{k}, N_{p}^{k}+N_{p}^{k-1}, N_{p}^{k}+2N_{p}^{k-1}, N_{p}^{k}+3N_{p}^{k-1},
\ldots, N_{p}^{k}+((N_{p}-1)N_{p}-1)N_{p}^{k-1}\}.$$
For instance, $$A_{2}=\bigcup_{k={1}}^{2k_{2}-1}\{2^{k},2^{k}+2^{k-1}\},\;
\;
A_{4}=\bigcup_{k=k_{2}}^{2k_{3}-1}\{4^{k},4^{k}+4^{k-1}, 4^{k}+2 \,4^{k-1}, \ldots, 4^{k}+11\, 4^{k-1}\}, \textrm{ etc.}$$
As the last element of $A_{N_{p}}$ is $N_{p}^{2k_{p+2}}-N_{p}^{2k_{p+2}-2}$ which is less than the first element of
$A_{N_{p+1}}$, $N_{p+1}^{k_{p+2}}=N_{p}^{2k_{p+2}}$, these sets are successive and disjoint. Let $(n_{j})_{j\geq 0}$ be the strictly increasing sequence such that $A=\bigcup_{p\geq 0}A_{N_{p}}=\{n_{j} \textrm{ ; } j\geq 0\}$.
Let us first check that $n_{j+1}/n_{j} \to 1$:
first of all, if $n_{j}$ and $n_{j+1}$ belong to the same set $A_{N_{p},k}$, $$\frac{n_{j+1}}{n_{j}}=\frac{N_{p}^{k}+l\,N_{p}^{k-1}}{N_{p}^{k}+(l-1)\,N_{p}^{k-1}}=1+\frac{N_{p}^{k-1}}{N_{p}^{k}+(l-1)\,N_{p}^{k-1}}\leq 1+\frac{1}{N_{p}}\cdot$$ If $n_{j}$ is in some set $A_{N_{p},k}$ and $n_{j+1}$ is in $A_{N_{p},k+1}$,
$$\frac{n_{j+1}}{n_{j}}=\frac{N_{p}^{k+1}}{N_{p}^{k+1}-N_{p}^{k-1}}=\frac{1}{1-\frac{1}{N_{p}^{2}}}=\frac{N_{p+1}}{N_{p+1}-1}\cdot$$ Lastly, if $n_{j}$ is the last integer of $A_{N_{p}}$ and $n_{j+1}$ is the first integer of $A_{N_{p+1}}$,
$$\frac{n_{j+1}}{n_{j}}=\frac{N_{p+1}^{k_{p+2}}}{N_{p}^{2k_{p+2}}-N_{p}^{2k_{p+2}-2}}=
\frac{N_{p}^{2k_{p+2}}}{N_{p}^{2k_{p+2}}-N_{p}^{2k_{p+2}-2}}=\frac{N_{p+1}}{N_{p+1}-1}\cdot$$ Thus $n_{j+1}/n_{j} \to 1$.
Let now  $\sigma  $ be a continuous probability \mea\ on $\T$ such that
\begin{itemize}
 \item for any $0\le k \le 2k_{2}-1$, $|\hat{\sigma  }(2^{k})-1|\leq a_{k}$

 \item for any $p\geq 1$  and $k_{p+1}\le k \le 2k_{p+2}-1$, $$|\hat{\sigma  }(N_{p}^{k})-1|\leq \frac{a_{2k_{p+1}-1}}{a_{k_{p+1}}} a_{k}$$
\end{itemize}
where $a_{0}=a_{1}=1$ and $a_{k}=\frac{1}{k\log k}$ for $k\geq 2$.
Such a \mea\ does exist by Proposition \ref{prop33} and Remark \ref{rem33}. Indeed the successive terms of the sequence $$(1,2,4,\ldots, 2^{2k_{2}-1}, 4^{k_{2}}, 4^{k_{2}+1}, \ldots )=(m_{j})_{j\geq 0}$$ divide each other. The sequence $(a_{0},a_{1}, \ldots, a_{2k_{2}-1},\frac{a_{2k_{2}-1}}{a_{k_{2}}} a_{k_{2}}, \frac{a_{2k_{2}-1}}{a_{k_{2}}} a_{k_{2}+1}, \ldots)=(b_{j})_{j\geq 0}$ is decreasing to zero, and $\sum b_{j}$ is divergent: if the sequence $(k_{p})$ grows fast enough,
$$\sum_{j\geq 0} b_{j}\geq \sum_{k=2}^{2k_{2}-1}a_{k}+ \frac{a_{2k_{2}-1}}{a_{k_{2}}} \sum_{k=k_{2}}^{2k_{3}-1}a_{k}+\ldots$$ and since the series $\sum a_{k}$ is divergent, it is possible to choose $k_{p+1}$ so large \wrt\ $k_p$ that $$\frac{a_{2k_{p}-1}}{a_{k_{p}}}
\sum_{k=k_{p}}^{2k_{p+1}-1}a_{k}\geq 1$$ for instance for each $p$. So we have a \proba\ \mea\ $\sigma  $
on $\T$ such that $|\hat{\sigma  }({m_{j}})-1|\leq b_{j}$ for each $j\ge 0$.
It remains to show that $|\hat{\sigma  }({n_{k}})-1|\to 0$. For $k_{p+1}\le k\le 2k_{p+2}-1$ and $0\le l\le (N_{p}-1)N_{p}-1$, we have
\begin{eqnarray*}
|\hat{\sigma  }(N_{p}^{k}+l\,N_{p}^{k-1})-1|&\leq& |\hat{\sigma  }(N_{p}^{k})-1|+l\,|\hat{\sigma  }(N_{p}^{k-1})-1|\\
&\leq& |\hat{\sigma  }(N_{p}^{k})-1|+ ((N_{p}-1)N_{p}-1)
|\hat{\sigma  }(N_{p}^{k-1})-1|.
\end{eqnarray*}
If $k_{p+1}+1\le k \le 2k_{p+2}-1$, this is less than
\begin{eqnarray*}
\frac{a_{2k_{p+1}-1}}{a_{k_{p+1}}}(a_{k}+((N_{p}-1)N_{p}-1)a_{k-1})&\leq& (N_{p}-1)N_{p}\,
\frac{a_{2k_{p+1}-1}}{a_{k_{p+1}}}\,a_{k_{p+1}}\\
&\le& (N_{p}-1)N_{p}\, a_{2k_{p+1}-1}.
\end{eqnarray*}
If $k_{p+1}$ is large enough compared to $N_{p}$, this quantity is less than $2^{-p}$. If $k=k_{p+1}$, $N_{p}^{k_{p+1}-1}=N_{p-1}^{2k_{p+1}-2}$. Hence
\begin{eqnarray*}
|\hat{\sigma  }(N_{p}^{k_{p+1}}+l\,N_{p}^{k_{p+1}-1})-1|&\leq& a_{2k_{p+1}-1}+((N_{p}-1)N_{p}-1)\,\frac{a_{2k_{p}-2}}{a_{k_{p}}}\, a_{2k_{p+1}-2}\\
&\leq& (1+((N_{p}-1)N_{p}-1)\,\frac{a_{2k_{p}-2}}{a_{k_{p}}}) a_{2k_{p+1}-2}
\end{eqnarray*}
and this again can be made less than $2^{-p}$ provided $k_{p+1}$ is sufficiently large \wrt\ $N_{p}$ and $k_{p}$.
Hence $\hat{\sigma  }(n_{k})\to 1$ as $k\to +\infty $, and this proves that $(n_{k})_{k\geq 0}$ is a \rs.
\end{proof}

\section{Topologically and uniformly rigid linear dynamical systems}

\subsection{Back to rigidity in the linear framework}
Before moving over to topological versions of rigidity for linear dynamical systems, we have to settle the following natural question: which sequences $(n_{k})_{k\geq 0}$ appear as rigidity sequences (in the \mea-theoretic sense) for linear dynamical systems? Here is the answer:

\begin{theorem}\label{th5}
 Let $(n_{k})_{k\geq 0}$ be an increasing sequence of positive integers. The following assertions are equivalent:
 \begin{itemize}
  \item[(1)] there exists a continuous \proba\ \mea\ $\sigma  $  on
    $\T$  such that $\hat{\sigma  }(n_{k})\to 1$ as
    $k\to +\infty $,
i.e., $(n_{k})_{k\geq 0}$ is a rigidity sequence;

\item[(2)] there exists a bounded linear \op\ $T$ on a \sep\ complex
  infinite-dimensional Hilbert space $H$ which admits a \nd\ \ga\
  \mea\ $m$ \wrt\ which $T$ defines a \wmx\ \mpt\ which is \rg\ \wrt\
  $(n_{k})_{k\geq 0}$.
 \end{itemize}
\end{theorem}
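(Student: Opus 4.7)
The implication $(2)\Rightarrow(1)$ is immediate: under $(2)$, the measure-preserving system $(H,\mathcal{B},m,T)$ is weakly mixing and rigid with respect to $(n_{k})$, so Proposition~\ref{prop4} applied to this system exhibits $(n_{k})$ as a rigidity sequence. For $(1)\Rightarrow(2)$, the plan is constructive: starting from a continuous probability measure $\sigma$ on $\T$ with $\hat{\sigma}(n_{k})\to 1$, I would first replace $\sigma$ by the symmetric measure $(\sigma+\overline{\sigma})/2$ as in the proof of Proposition~\ref{prop4} and assume $\sigma$ is symmetric. The goal is to produce a bounded linear operator $T$ on a separable complex Hilbert space $H$ admitting a non-degenerate invariant Gaussian measure $m$ with respect to which $T$ is weakly mixing and rigid along $(n_{k})$ in the measure-theoretic sense.

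The operator $T$ on $H=\ell_{2}(\N)$ would be constructed following the template of the proof of Theorem~\ref{th1bis}: one builds $T=D+B$, where $D$ is a diagonal operator with unimodular eigenvalues $(\lambda_{n})_{n\geq 1}$ chosen densely in the support of $\sigma$, and $B$ is a weighted backward shift whose weights are chosen so that the associated eigenvectors $u^{(n)}$ satisfy the three conditions of Theorem~\ref{th0bis}. That theorem then provides a perfectly spanning family of unimodular eigenvectors, which by the material of Section~2.1 guarantees the existence of a non-degenerate invariant Gaussian measure $m$ with respect to which $T$ is weakly mixing. Concretely, $m$ can be realized as the distribution of a convergent Gaussian series $\sum_{n}z_{n}c_{n}u^{(n)}$ with independent standard complex Gaussian coefficients $(z_{n})$ and square-summable positive weights $(c_{n})$ chosen small enough that the series converges in $H$.

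Rigidity would then be obtained via the Wiener chaos decomposition of the Koopman operator $U_{T}$ on $L^{2}(H,m)$: strong convergence $U_{T}^{n_{k}}\to I$ reduces, by a density argument on symmetric tensor products, to the corresponding convergence on the first chaos. The first chaos is isometric to the completion $\widetilde{H}$ of $H$ under the Cameron--Martin norm $\|h\|_{m}^{2}=\int_{H}|\langle\xi,h\rangle|^{2}\,dm(\xi)$, and $T$ extends to a unitary operator on $\widetilde{H}$ whose spectral type $\mu$ depends on the geometry of the $u^{(n)}$'s in $H$ and on the weights $(c_{n})$. The crucial step is to tune these parameters so that $\mu$ is absolutely continuous with respect to $\sigma$; once this is arranged, the hypothesis $\hat{\sigma}(n_{k})\to 1$ forces $\hat{\mu}(n_{k})\to 1$, whence $T^{n_{k}}\to I$ strongly on $\widetilde{H}$ and rigidity of the Gaussian system follows via Fact~\ref{fact2}.

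The main obstacle is precisely this tuning step: prescribing the first-chaos spectral type calls for a delicate coordination between the distribution of the eigenvalues $\lambda_{n}$ on $\T$, the geometry of the eigenvectors $u^{(n)}$, and the covariance weights $(c_{n})$. This is in essence a realization result in the spirit of \cite{BG3}, identifying the continuous probability measures on $\T$ that can appear as first-chaos spectral types of linear dynamical systems of the above form; the heart of the proof lies in verifying that every continuous $\sigma$ can be so realized, after which the conclusion is immediate.
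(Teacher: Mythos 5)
Your implication $(2)\Rightarrow(1)$ is exactly the paper's: it follows at once from Proposition~\ref{prop4}. The direction $(1)\Rightarrow(2)$, however, is not proved: you identify the ``tuning'' of the first-chaos spectral type as the main obstacle and then leave it as a programme rather than an argument, and this is precisely the heart of the matter. Moreover, the concrete construction you propose cannot work as stated. If $m$ is the law of a series $\sum_{n}z_{n}c_{n}u^{(n)}$ with independent Gaussian coefficients and the $u^{(n)}$ are eigenvectors of $T$ with eigenvalues $\lambda_{n}$, then the first chaos of $m$ is the closed span in $L^{2}(m)$ of the functionals $\xi\mapsto\pss{\xi}{x}=\sum_{n}z_{n}c_{n}\pss{u^{(n)}}{x}$, which is carried by the closure $V$ of $\{(c_{n}\pss{u^{(n)}}{x})_{n}\ ;\ x\in H\}$ in $\ell_{2}$; the Koopman operator acts on $V$ as the restriction of the diagonal operator $\mathrm{diag}(\lambda_{n})$ to the reducing subspace $V$. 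Its maximal spectral type is therefore purely atomic (supported by $\{\lambda_{n}\}$), so the Koopman operator has non-constant eigenfunctions and the Gaussian system is \emph{never} weakly mixing for a non-degenerate measure of this form. Continuity of the first-chaos spectral type --- which is what weak mixing of a Gaussian system requires --- is obtained in \cite{BG2}, \cite{BG3} by integrating a continuous eigenvector field against a continuous measure on $\T$, not by summing over a countable family of eigenvectors.

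The paper sidesteps your obstacle by choosing a different operator altogether. Letting $L$ be the (compact, perfect) support of $\sigma$, it uses the Kalish operator \cite{Ka}: $T_{0}=M-J$ on $L^{2}(\T)$ restricted to $H=\overline{\mathrm{sp}}[\chi_{\lambda}\ ;\ \lambda\in L]$, for which $E:\lambda\mapsto\chi_{\lambda}$ is a continuous, spanning eigenvector field defined on $L$ itself. The Gaussian measure is then built directly from $\sigma$, with covariance $S=KK^{*}$ where $K\varphi=\int_{\T}\varphi(\lambda)E(\lambda)\,d\sigma(\lambda)$; the intertwining $TK=KV$, with $V$ multiplication by $\lambda$ on $L^{2}(\T,\sigma)$, makes the relevant spectral data absolutely continuous with respect to $\sigma$ by construction, so weak mixing comes from the continuity of $\sigma$ and rigidity reduces to $\int_{\T}|\lambda^{n_{k}}-1|\,|h(\lambda)|\,d\sigma(\lambda)\to 0$ for $h\in L^{1}(\T,\sigma)$, already established in the proof of Proposition~\ref{prop4}. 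If you wish to salvage your outline with the $D+B$ operator of Theorem~\ref{th1bis}, you must replace the discrete Gaussian series by a measure of this integral type (this is what the perfectly spanning property of Theorem~\ref{th0bis} is for) and then actually prove that the resulting first-chaos spectral type $\mu$ satisfies $\hat{\mu}(n_{k})\to 1$; as written, that step is missing.
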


\begin{proof}
 The implication $(2)\Rightarrow (1)$ is an immediate consequence of Proposition \ref{prop4}, so let us prove that $(1)\Rightarrow (2)$.
 Let $\sigma  $ be a continuous \proba\ \mea\ $\sigma  $  on $\T$
 such that $\hat{\sigma  }(n_{k})\to 1$ as $n_{k}\to
 +\infty $, and let $L\subseteq \T$ be the support of the measure
 $\sigma  $. It is a compact perfect subset of $\T$, and $\sigma
 (\Omega )>0$ for any non-empty open subset $\Omega$ of $L$. Kalish constructed in \cite{Ka} an example of a bounded \op\ on a Hilbert space whose point spectrum is equal to $L$, and, as in \cite{BG}, we use this example for our purposes: let $T_{0}$ be the \op\ defined on $L^{2}(\T)$ by $T_{0}=M-J$, where $Mf(\lambda )=\lambda f(\lambda )$ and $Jf(\lambda )=\int_{(1,\lambda )}f(\zeta )d\zeta $ for any $f\in L^{2}(\T)$ and $\lambda \in\T$.
 For $\lambda \in\T$, $\lambda =e^{i\theta }$, $(1,\lambda )$ denotes the arc $\{e^{i\alpha } \textrm{ ; } 0\leq \alpha \leq \theta  \}$, and $(\lambda ,1)$ the arc $\{e^{i\alpha } \textrm{ ; } \theta \leq \alpha \leq 2\pi \}$. For every $\lambda $, the characteristic function $\chi_{\lambda }$ of the arc $(\lambda ,1)$ is an \eve\ of $T_{0}$ associated to the \eva\ $\lambda $. Let $T$ be the \op\ induced by $T_{0}$ on the space $H=\overline{\textrm{sp}}[\chi_{\lambda } \textrm{ ; } \lambda \in L]$. It is proved in \cite{Ka} that $\sigma  (T)=\sigma  _{p}(T)=L$, and it is not difficult to see that $E:\lambda \mapsto \chi_{\lambda }$ is a continuous \eve\ field for $T$ on $L$ which is spanning. Hence it is a perfectly spanning unimodular \eve\ field \wrt\ the \mea\ $\sigma  $ (see \cite{BG} for details), and there exists a \nd\ \ga\ \mea\ $m$ on $H$ whose covariance \op\ $S$ is given by $S=KK^{*}$, where
 $K:L^{2}(\T,\sigma  )\to H$ is the \op\ defined by $K\varphi=\int_{\T}\varphi(\lambda )E(\lambda )d\sigma  (\lambda )$ for $\varphi \in L^{2}(\T,\sigma  )$,
 \wrt\ which $T$ defines a \wmx\ \mpt. It remains to prove that $T$ is \rg\ \wrt\ $(n_{k})_{k\geq 0}$, i.e. that $U_{T}^{n_{k}}f$ tends weakly to $f$ in
 $L^{2}(H, \mathcal{B},m)$. Using the same kind of arguments as in \cite{BG2} or \cite[Ch. 5]{BM}, we see that it suffices to prove that for any elements $x,y$ of $H$,
 $$\int_{H}\pss{x}{T^{n_{k}}z}\overline{\pss{y}{z}}dm(z)\longrightarrow \int_{H}\pss{x}{z}\overline{\pss{y}{z}}dm(z)
 \qquad \textrm{as } n_{k}\to +\infty .$$ But this is clear:
since
 $TK=KV$, where $V$ is the multiplication \op\ by $\lambda $ on $L^{2}(\T,\sigma  )$, we have
 \begin{eqnarray*}
   \int_{H}\pss{x}{T^{n_{k}}z}\overline{\pss{y}{z}}dm(z)&=&\pss{KK^{*}T^{*n_{k}}x}{y}=\pss{V^{*n_{k}}K^{*}x}{K^{*}y}\\
   &=&\int_{\T}\lambda ^{-n_{k}}\pss{x}{E(\lambda )}\overline{\pss{y}{E(\lambda )}}d\sigma  (\lambda ).
 \end{eqnarray*}
The function $h(\lambda )=\pss{x}{E(\lambda )}\overline{\pss{y}{E(\lambda )}}$ belongs to $L^{1}(\T,\sigma  )$,  and we have seen in the proof of Proposition \ref{prop4} that
 $\int_{\T}|\lambda ^{n_{k}}-1|\, |h(\lambda )| d\sigma  (\lambda )\to 0$. Hence
 $$\int_{\T}\lambda ^{-n_{k}}
h(\lambda)
d\sigma  (\lambda )\to \int_{\T}h(\lambda )d\sigma  (\lambda )=
 \int_{H}\pss{x}{z}\overline{\pss{y}{z}}dm(z),$$ and this proves our statement.
 \end{proof}

\begin{remark}
The Kalish-type \ops\ which are used in the proof of Theorem \ref{th5} have no reason at all to be power-bounded \wrt\ $(n_{k})$, contrary to what happens when considering topological rigidity. We only know for instance, applying the rigidity assumption to the function $f(z)=||z||$, that
$$\int_{H}||(T^{n_{k}}-I)z||\, dm(z)\to 0 \qquad \textrm{as } n_{k}\to +\infty .$$
\end{remark}

\begin{remark}
Theorem \ref{th5} gives another proof of the characterization of rigidity sequences obtained in Proposition \ref{prop4}.
\end{remark}

\subsection{A characterization of topologically rigid sequences for linear dynamical systems}
Let us prove Theorem \ref{th2}.
First of all, (3) implies (2) since, as recalled in Section 2.1, (3)
implies that $T$ has perfectly spanning unimodular \eve s. We suppose
next that (2) holds and show (1). Let $X$ and $T$ be as in (2). For any $\lambda \in\sigma _{p}(T)\cap\T$, let $e_{\lambda }$ be
an associated \eve\ with $||e_{\lambda }||=1$. Since $T^{n_{k}}e_{\lambda }\to e_{\lambda }$, $|\lambda ^{n_{k}}-1|\to 0$ for any $\lambda \in\sigma _{p}(T)\cap\T$. Moreover, by the uniform boundedness principle, $\sup_{k\geq 0}||T^{n_{k}}||=M$ is finite. Suppose by contradiction that there exists an $\varepsilon _{0}>0$ such that for any $\lambda,\mu  \in\sigma _{p}(T)\cap\T$ with $\lambda \not =\mu $, $\sup_{k\geq 0}|\lambda ^{n_{k}}-\mu ^{n_{k}}|\geq \varepsilon _{0}$. Then for any $\lambda,\mu  \in\sigma _{p}(T)\cap\T$,
$$|\lambda ^{n_{k}}-\mu ^{n_{k}}|-||e_{\lambda }-e_{\mu
}||\leq||\lambda ^{n_{k}}e_{\lambda }-\mu ^{n_{k}}e_{\mu }||\leq
M\,||e_{\lambda }-e_{\mu }||$$ so that $|\lambda ^{n_{k}}-\mu
^{n_{k}}|\leq (M+1)||e_{\lambda }-e_{\mu }||$. Hence $\varepsilon
_{0}\leq  (M+1)||e_{\lambda }-e_{\mu }||$, and the unimodular \eve s
of $T$ are
$\varepsilon _{0}/(M+1)$-separated. Since $X$ is \sep\ there can only
be countably many such \eve s, which contradicts the fact that
$\sigma_p(T)\cap\T$ is uncountable. So for any $\varepsilon >0$ there exist $\lambda,\mu $ in $\sigma  _{p}(T)\cap\T$ with $\lambda \not =\mu $ such that
$\sup_{k\geq 0} |(\lambda \overline{\mu})^{n_{k}}-1|\leq \varepsilon
$, and $|(\lambda \overline{\mu})^{n_{k}}-1|\to 0$. So (1)
holds true.

We state again what we have to prove in order to obtain
that (1) implies (3):

\begin{theorem}\label{th2ter}
Let $(n_{k})_{k\geq 0}$  be an increasing sequence of integers with $n_{0}=1$ such that for any $\varepsilon >0$ there exists a $\lambda \in\T\setminus\{1\}$ with $$
\sup_{k\geq 0}|\lambda ^{n_{k}}-1|\leq\varepsilon \quad \textrm{ and }\quad |\lambda ^{n_{k}}-1|\to 0
\quad \textrm{ as }\quad
k\to\infty.
$$ Then there exists a bounded linear \op\ $T$ on a Hilbert space $H$
such that $T$ has a \ps\ and for every $x\in H$, $T^{n_{k}}x
\to x$ as
$k\to\infty$.
\end{theorem}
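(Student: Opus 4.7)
The strategy is to follow the construction of Theorem \ref{th1bis} essentially verbatim, but to confine all the unimodular numbers $\lambda_{n}$ defining the diagonal part $D$ of $T$ to the subgroup
$$E = \{\lambda \in \T \,;\, \lambda^{n_{k}} \to 1 \textrm{ as } k\to +\infty\}.$$
Once this refinement is in place, the topological rigidity $T^{n_{k}}x\to x$ will fall out of the eigenvalue equation together with Proposition \ref{prop1}.

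The first step is to check that $(E,d_{(n_{p})})$ is a non-empty complete metric space with no isolated points, where, as in Section 2.3, $d_{(n_{p})}(\lambda ,\mu )=\sup_{k\geq 0}|\lambda ^{n_{k}}-\mu ^{n_{k}}|$. The set $E$ is clearly a subgroup of $\T$. Because $n_{0}=1$, $d_{(n_{p})}$ dominates the usual modulus distance, so $(\T,d_{(n_{p})})$ is complete, and a direct triangle-inequality argument shows that $E$ is closed in $(\T,d_{(n_{p})})$: if $\lambda _{j}\to \lambda $ in $d_{(n_{p})}$ with $\lambda _{j}\in E$, then
$$|\lambda ^{n_{k}}-1|\leq d_{(n_{p})}(\lambda ,\lambda _{j})+|\lambda _{j}^{n_{k}}-1|$$
can be made arbitrarily small by first choosing $j$ large and then $k$ large. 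By the hypothesis of the theorem, $1$ is an accumulation point of $E$ for $d_{(n_{p})}$; and since $d_{(n_{p})}$ is translation-invariant on $\T$ and $E$ is a group, every point of $E$ is an accumulation point. Hence $E$ can play the role of the uncountable subset of $\T$ without isolated points for $d_{(n_{p})}$ which was used in the proof of Theorem \ref{th1bis}.

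The second step is to run the construction of Theorem \ref{th1bis} with the additional requirement that $\lambda _{n}\in E$ at every stage. One sets $\lambda _{1}=1\in E$, and at step $n$ one picks $\lambda _{n}\in E\setminus \{\lambda _{1},\ldots ,\lambda _{n-1}\}$ with $d_{(n_{p})}(\lambda _{n},\lambda _{j(n)})$ as small as needed, which (again using $n_{0}=1$) also makes $|\lambda _{n}-\lambda _{j(n)}|$ as small as needed. All the estimates of Sections 2.2 and 2.3 go through unchanged and yield an operator $T=D+B$ on $\ell_{2}(\N)$ with perfectly spanning unimodular eigenvectors and $\sup_{k\geq 0}\|T^{n_{k}}\|\leq 1+\delta $, and crucially the eigenvalue $\lambda _{n}$ associated to the eigenvector $u^{(n)}$ constructed in Section 2.2 now lies in $E$.

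The final step is to verify that $T^{n_{k}}x\to x$ for every $x\in \ell_{2}(\N)$. For every $n\geq 1$, since $\lambda _{n}\in E$,
$$T^{n_{k}}u^{(n)}=\lambda _{n}^{n_{k}}\,u^{(n)}\longrightarrow u^{(n)} \quad \textrm{as } k\to +\infty .$$
By Proposition \ref{prop1}, $\textrm{sp}[u^{(n)}\,;\, n\geq 1]$ is dense in $\ell_{2}(\N)$, and $\sup_{k\geq 0}\|T^{n_{k}}\|<+\infty $; a standard three-epsilon argument then gives $T^{n_{k}}x\to x$ for every $x\in \ell_{2}(\N)$. The only genuine new ingredient compared with Theorem \ref{th1bis} is the group-theoretic observation that $E$ itself is non-trivial and perfect in the metric $d_{(n_{p})}$; once that is granted, the hard norm estimates of Theorem \ref{th1bis} are reused verbatim and topological rigidity drops out of the eigenvalue equation on the dense subspace of eigenvectors.
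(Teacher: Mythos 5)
Your proposal is correct, and while the operator is the same perturbed diagonal $T=D+B$ as in Theorem \ref{th1bis}, you diverge from the paper's proof in two places, both of which amount to genuine simplifications. First, where the paper builds (Lemma \ref{lem5}) an explicit Cantor set $K\subset\T$ of points $\lambda$ with $\lambda^{n_k}\to 1$ such that $(K,d_{(n_k)})$ is separable, you simply observe that the group $E=\{\lambda:\lambda^{n_k}\to 1\}$ is dense-in-itself for $d_{(n_k)}$, because the hypothesis makes $1$ a $d_{(n_k)}$-accumulation point of $E$ and $d_{(n_k)}$ is invariant under multiplication; that is exactly the property of the eigenvalue pool used in the hard estimates of Section 2.3, so the construction and the bound $\sup_k\|T^{n_k}\|\leq 1+\delta$ carry over (your side remarks about completeness of $(\T,d_{(n_k)})$ and closedness of $E$ are true but not needed). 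Second, and more substantially, where the paper proves the operator-norm convergence $\|T^{n_p}-D^{n_p}\|\to 0$ — which requires the algebraic identity of Lemma \ref{lem6} showing that the limits of the $s_{k,l}^{(n_p)}$ vanish — you obtain the strong pointwise convergence $T^{n_k}x\to x$ directly from the eigenvalue equation $T^{n_k}u^{(n)}=\lambda_n^{n_k}u^{(n)}$ on the dense span of the eigenvectors, together with the uniform bound $\sup_k\|T^{n_k}\|\leq 1+\delta$ already in hand. This is softer and entirely avoids Lemma \ref{lem6}. What the paper's heavier route buys is the stronger conclusion $\|T^{n_p}-D^{n_p}\|\to 0$, which is reused in the proof of Theorem \ref{th2bis} to get \emph{uniform} rigidity; your argument would not suffice there, but for Theorem \ref{th2ter} it is complete.
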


Before starting the proof of Theorem \ref{th2ter}, let us point out that the statement is not true anymore if we only suppose that there exists a $\lambda \in\T\setminus\{1\}$ such that $|\lambda ^{n_{k}}-1|\to 0$: if $(q_n)_{n\geq 0}$ is the sequence of denominators of the partial quotients in the continued fraction expansion of $\alpha=\sqrt{2}$ for instance, $\lambda=e^{2i\pi\alpha}$ is such that $|\lambda ^{q_{n}}-1|\to 0$. But the sequence $(\frac{q_{n+1}}{q_n})_{n\geq 0}$ is bounded (see for instance \cite{HW}), so that $(q_n)_{n\geq 0}$ is not even a \js.

\begin{proof}[Proof of Theorem \ref{th2ter}]
We take the same kind of \op\ as in the proof of Theorem \ref{th1bis}, and show that under the assumptions of Theorem \ref{th2}, such an \op\
$T=D+B$ is such that $||T^{n_{p}}-D^{n_{p}}||$ tends to $0 $ as $n_{p}$ tends to infinity.
Before starting on this, we take advantage of the assumption of the theorem to construct a particular perfect compact subset of $\T$, in which our coefficients
$\lambda _{l}$ will be chosen later in the proof:

\begin{lemma}\label{lem5}
Under the assumptions of Theorem \ref{th2ter}, there exists a perfect compact subset $K$ of $\T$ such that $(K,d_{(n_{k})})$ is \sep\ and for any $\lambda \in K$, $|\lambda ^{n_{k}}-1|\to 0$ as $n_{k}\to +\infty $.
\end{lemma}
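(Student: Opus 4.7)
The plan is to realize $K$ as the set of branches of a Cantor scheme built \emph{inside} the subgroup
$$G:=\{\lambda\in\T \,:\, |\lambda^{n_k}-1|\to 0\text{ as }k\to+\infty\}$$
of $\T$, exploiting the fact that the hypothesis of Theorem \ref{th2ter} is precisely the statement that $G$ accumulates at $1$ for the translation-invariant distance $d_{(n_k)}$. First I would record three preliminary observations: (a) $G$ is a multiplicative subgroup of $\T$ containing elements $\mu\neq 1$ with $d_{(n_k)}(\mu,1)$ arbitrarily small; (b) $d_{(n_k)}$ is translation-invariant on $\T$, because $|(\zeta\lambda)^{n_k}-(\zeta\mu)^{n_k}|=|\lambda^{n_k}-\mu^{n_k}|$; and (c) since $n_0=1$, one has $|\lambda-\mu|\leq d_{(n_k)}(\lambda,\mu)$, so $d_{(n_k)}$ refines the Euclidean metric on $\T$ and $(\T,d_{(n_k)})$ is itself a complete metric space.

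Next I would construct by induction on $n\geq 0$ a family $(\lambda_\sigma)_{\sigma\in\{0,1\}^n}\subset G$ and radii $r_n\downarrow 0$ such that the closed $d_{(n_k)}$-balls $B_\sigma^{(n)}:=\{\nu\in\T\,:\,d_{(n_k)}(\nu,\lambda_\sigma)\leq r_n\}$ satisfy $B_{\sigma 0}^{(n+1)}\cup B_{\sigma 1}^{(n+1)}\subset B_\sigma^{(n)}$, while $B_\sigma^{(n)}\cap B_{\sigma'}^{(n)}=\emptyset$ whenever $\sigma\neq\sigma'$ have the same length. Starting from $\lambda_\emptyset=1$, the inductive step chooses two distinct $\mu_0,\mu_1\in G\setminus\{1\}$ with $d_{(n_k)}(\mu_i,1)$ smaller than a prescribed threshold and sets $\lambda_{\sigma i}:=\lambda_\sigma\mu_i\in G$; by translation invariance (b), smallness of $d_{(n_k)}(\mu_i,1)$ converts into smallness of $d_{(n_k)}(\lambda_{\sigma i},\lambda_\sigma)$. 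The existence of \emph{two} distinct such $\mu_i$ follows from (a): if one picks $\mu\in G$ with $d_{(n_k)}(\mu,1)<2$, then $\mu\neq\pm 1$, hence $\mu^2\neq \mu$ and $\mu^2\neq 1$, and $d_{(n_k)}(\mu^2,1)\leq 2\,d_{(n_k)}(\mu,1)$, so $\mu_0:=\mu$ and $\mu_1:=\mu^2$ both lie in $G\setminus\{1\}$ with controllably small $d_{(n_k)}$-norm.

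Once the scheme is in place I would define
$$K:=\bigcap_{n\ge 0}\,\bigcup_{|\sigma|=n}B_\sigma^{(n)}.$$
By completeness of $(\T,d_{(n_k)})$, for every $\sigma\in\{0,1\}^{\N}$ the sequence $(\lambda_{\sigma|n})_n$ is Cauchy and converges to some $\lambda^*_\sigma\in K$, and $\sigma\mapsto\lambda^*_\sigma$ is a homeomorphism of $\{0,1\}^{\N}$ onto $K$ endowed with $d_{(n_k)}$. Hence $K$ is compact and perfect in $(\T,d_{(n_k)})$; by (c), $K$ is compact and perfect for the usual topology on $\T$ as well, and compactness in the finer metric yields separability of $(K,d_{(n_k)})$. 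To see that each $\lambda^*_\sigma$ lies in $G$, given $\varepsilon>0$ I pick $n$ with $d_{(n_k)}(\lambda^*_\sigma,\lambda_{\sigma|n})<\varepsilon/2$ and then $k_0$ with $|\lambda_{\sigma|n}^{n_k}-1|<\varepsilon/2$ for $k\geq k_0$; the first bound forces $|(\lambda^*_\sigma)^{n_k}-\lambda_{\sigma|n}^{n_k}|<\varepsilon/2$ uniformly in $k$, so $|(\lambda^*_\sigma)^{n_k}-1|<\varepsilon$ for $k\geq k_0$.

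The only delicate point is the inductive bookkeeping: at every step the two new coefficients $\mu_0,\mu_1$ and the new radius $r_{n+1}$ must be chosen so that all $2^{n+1}$ balls at the next level remain pairwise disjoint while each sits in its parent. Thanks to translation invariance of $d_{(n_k)}$, this boils down to picking
$$r_{n+1}<\tfrac{1}{3}\,\min\bigl(d_{(n_k)}(\mu_0,\mu_1),\,r_n-\max_{i=0,1}d_{(n_k)}(\mu_i,1)\bigr),$$
which is routine once the existence of distinct small $\mu_0,\mu_1\in G\setminus\{1\}$ has been secured as above.
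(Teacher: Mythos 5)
Your proof is correct and follows essentially the same route as the paper's: a binary tree of products of elements of $\T\setminus\{1\}$ that are $d_{(n_{k})}$-close to $1$ and satisfy $|\mu^{n_{k}}-1|\to 0$, whose branch limits form the desired perfect compact set with $(K,d_{(n_{k})})$ separable, the limits being shown to satisfy $|\lambda^{n_{k}}-1|\to 0$ by uniform-in-$k$ approximation by the finite products. The paper uses the pair of multipliers $\mu_{n},\overline{\mu}_{n}$ at level $n$ and explicit telescoping estimates where you use $\mu,\mu^{2}$ and nested disjoint closed $d_{(n_{k})}$-balls together with completeness of $(\T,d_{(n_{k})})$, but these differences are only cosmetic.
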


\begin{proof}[Proof of Lemma \ref{lem5}]
The proof proceeds along the same lines as in \cite{BaG2}:
let $(\mu _{n})_{n\geq 1}$
 be a sequence of elements of $\T\setminus\{1\}$ such that
 $$d_{(n_{k})}(\mu _{1},1)<4^{-1}
 \textrm{ , } d_{(n_{k})}(\mu _{n},1)<4^{-n}d_{(n_{k})}
 (\mu _{n-1}, \overline{\mu
 }_{n-1}) \textrm{ for any } n\geq 2,$$
 $d_{(n_{k})}(\mu _{n},\overline{\mu }_{n})$ decreases
 with $n$, and moreover $|\mu_{n}^{n_{k}}-1|\to 0$ as $n_{k}\to +\infty $.
If $(s_{1}, \ldots, s_{n})$
is any finite sequence of zeroes and ones, we associate to it an element $\lambda _{(s_{1}, \ldots, s_{n})}$ of $\T$ in the
following way: we start with $\lambda _{(0)}=\mu _{1}$ and $\lambda _{(1)}=\overline{\mu
}_{1}$, and we have
$$d_{(n_{k})}(\lambda _{(0)},\lambda _{(1)})=
d_{(n_{k})}(\mu _{1},\overline{\mu _{1}})>0.$$
Then if $\lambda _{(s_{1},\ldots,s_{n-1})}$
 has already been defined, we set $$\lambda _{(s_{1},\ldots,s_{n-1},0)}=
 \lambda _{(s_{1},\ldots,s_{n-1})} \mu _{n}\quad   \textrm{and}\quad
 \lambda _{(s_{1},\ldots,s_{n-1},1)}=\lambda _{(s_{1},\ldots,s_{n-1})}\overline{\mu }_{n}.$$ We have
\begin{equation*}
d_{(n_{k})}(\lambda _{(s_{1},\ldots,s_{n-1})},
\lambda _{(s_{1},\ldots,s_{n-1},s_{n})} )
<4^{-n} d_{(n_{k})}(\mu _{n-1},\overline{\mu }_{n-1})
\end{equation*}
and
\begin{equation*}
d_{(n_{k})}(\lambda _{(s_{1},\ldots,s_{n-1},0)},
\lambda _{(s_{1},\ldots,s_{n-1},1)} )=d_{(n_{k})}(\mu _{n},\overline{\mu }_{n}),
\end{equation*}
so that for any infinite sequence $s=(s_{1},  s_{2}, \ldots)$ of zeroes and ones,
we can define $\lambda _{s}\in\T$ as $\lambda_{s}=
\lim_{n\to +\infty }\lambda _{(s_{1},\ldots, s_{n})}.$
It is not difficult to check (see \cite{BaG2} for details) that the map $s\longmapsto \lambda _{s}$ from $2^{\omega }$ into $\T$ is
one-to-one, so that $K=\{ \lambda _{s} \textrm{ ; } s\in 2^{\omega }\}$ is homeomorphic to the Cantor set, hence compact and perfect, and that $(K,d_{(n_{k})})$ is \sep. It remains to see that for any $s\in 2^{\omega }$, $|\lambda _{s}^{n_{k}}-1|\to 0$ as $n_{k}\to +\infty $. We have for any $p\geq 1$
\begin{equation*}
\lambda _{s}=\lambda _{(s_{1},\ldots,s_{p})} \prod_{j\geq p}
\lambda _{(s_{1},\ldots,s_{j+1})} \overline{\lambda
}_{(s_{1},\ldots,s_{j})},
\end{equation*}
so that for any $p\geq 1$,
\begin{eqnarray*}
|\lambda _{s}^{n_{k}}-1|
&=&\left|
\lambda _{(s_{1},\ldots,s_{p})}^{n_{k}} \prod_{j\geq p}
\lambda _{(s_{1},\ldots,s_{j+1})}^{n_{k}} \overline{\lambda
}_{(s_{1},\ldots,s_{j})}^{n_{k}}
-1\right|\\
&\leq& |\lambda _{(s_{1},\ldots,s_{p})}^{n_{k}}-
1|+
\left|\prod_{j\geq p}
\lambda _{(s_{1},\ldots,s_{j+1})}^{n_{k}}-{\lambda
}_{(s_{1},\ldots,s_{j})}^{n_{k}}\right|.
\end{eqnarray*}
Hence
\begin{eqnarray*}
|\lambda _{s}^{n_{k}}-1|&\leq & |\lambda _{(s_{1},\ldots,s_{p})}^{n_{k}}-
1|
+ \sum_{j\geq p} d_{(n_{k})}(\lambda _{(s_{1},\ldots,s_{j+1})},\lambda _{(s_{1},\ldots,s_{j})})\\
&\leq&|\lambda _{(s_{1},\ldots,s_{p})}^{n_{k}}-
1|+2\sum_{j\geq p}4^{-(j+1)}d_{(n_{k})}(\mu _{j},\overline{\mu}_{j})\\
&\leq& |\lambda _{(s_{1},\ldots,s_{p})}^{n_{k}}-
1|+2\,d_{(n_{k})}(\mu _{p},\overline{\mu}_{p})\sum_{j\geq p}4^{-(j+1)}\\
&=&|\lambda _{(s_{1},\ldots,s_{p})}^{n_{k}}-
1|+\frac{2}{3}\,4^{-p}d_{(n_{k})}(\mu _{p},\overline{\mu}_{p}).
\end{eqnarray*}
Given any $\gamma >0$, take $p$ such that the second term is less than $\gamma /2$.
Since $|\mu _{n}^{n_{k}}-1|\to 0$ as $n_{k}\to +\infty $, $|\lambda _{(s_{1},\ldots,s_{p})}^{n_{k}}-
1|\to 0$ as $n_{k}\to +\infty $ for any finite sequence $(s_{1},\ldots, s_{p})$. Hence there exists an integer $k_{0} \geq 1$ such that for any $k\geq k_{0}$,
$|\lambda _{(s_{1},\ldots,s_{p})}^{n_{k}}-1|\leq \gamma /2$. Thus for any $k\geq k_{0}$ and any $s\in 2^{\omega }$, we have $|\lambda _{s}^{n_{k}}-1|<\gamma $. So we have proved that for any $\lambda \in K$, $|\lambda ^{n_{k}}-1|\to 0$ as $n_{k}\to +\infty $.
\end{proof}
Let us now go back to the proof of Theorem \ref{th2}.
We have seen that $$||T^{n_{p}}-D^{n_{p}}||^{2}\le \sum_{l\geq 2}\sum_{k=\max(1, l-n_{p})}^{l-1}|t_{k,l}^{(n_{p})}|^{2},$$ and that it is possible for each $l\geq 2$ to take $\lambda _{l}$ with $d_{(n_{p})}({\lambda _{l},\lambda _{j(l)}})$ so small that
$$\sum_{k=\max(1, l-n_{p})}^{l-1}|t_{k,l}^{(n_{p})}|^{2}\le 2^{-l}.$$
So we do the construction in this way with the additional requirement that for each $l\geq 1$, $\lambda _{l}$ is such that $|\lambda _{l}^{n_{p}}-1|\to 0$ as $n_{p}\to+\infty $ (this is possible by Lemma \ref{lem5}).
Let now $\varepsilon >0$ and $l_{0}\geq 2$ be such that $\sum_{l\geq l_{0}+1}2^{-l}<\frac{\varepsilon }{2}$. We have for any $p$ such that $n_{p}\geq l_{0}+1$
$$||T^{n_{p}}-D^{n_{p}}||^{2}\le\sum_{l=2}^{l_{0}}\sum_{k=1}^{l-1}|t_{k,l}^{(n_{p})}|^{2}+\frac{\varepsilon }{2}\cdot$$
The proof will be complete if we show that for any $k,l$ with $1\le k\le l-1$, $t_{k,l}^{(n_{p})}\to 0$ as $n_{p}\to+\infty $, or, equivalently, that
$s_{k,l}^{(n_{p})}\to 0$. Recall that by Lemma \ref{lem3}, $s_{k,l}^{(n_{p})}$ can be written as
$$s_{k,l}^{(n_{p})}=\sum_{j=k}^{l-1}c_{j}^{(k,l)}\,\frac{\lambda _{l}^{n_{p}+1-(l-k)}-\lambda _{j}^{n_{p}+1-(l-k)}}{\lambda _{l}-\lambda _{j}}$$ as soon as $n_{p}\geq l-k$. Since $\lambda _{j}^{n_{p}}\to 1$ for any $j\geq 1$,
$$s_{k,l}^{(n_{p})}\to s_{k,l}:=\sum_{j=k}^{l-1}c_{j}^{(k,l)}\,\frac{\lambda _{l}^{1-(l-k)}-\lambda _{j}^{1-(l-k)}}{\lambda _{l}-\lambda _{j}}\quad \textrm{as }n_{p}\to+\infty .$$ Thus we have to show that $s_{k,l}=0$ for any $1\le k\le l-1$. This is a consequence of the following lemma:

\begin{lemma}\label{lem6}
 For any $k,l$ with $1\le k\le l-1$ and any $p$ with $0\le p\le l-k-1$, we have
 $$\sum_{j=k}^{l-1}c_{j}^{(k,l)}\,\frac{\lambda _{l}^{1-(l-k-p)}-\lambda _{j}^{1-(l-k-p)}}{\lambda _{l}-\lambda _{j}}=0.$$
\end{lemma}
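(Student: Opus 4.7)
The plan is to recognize that the expression appearing in Lemma~\ref{lem6} is nothing but Lemma~\ref{lem3}'s formula analytically continued to values of $n$ below the range $n\ge l-k$ where it was originally derived, and that this continuation vanishes for reasons coming from Lagrange interpolation. More precisely, I would define, for any $n\in\Z$,
$$\Phi_{k,l}(n):=\sum_{j=k}^{l-1}c_j^{(k,l)}\,\frac{\lambda_l^{n+1-(l-k)}-\lambda_j^{n+1-(l-k)}}{\lambda_l-\lambda_j}.$$
Setting $n=p$ with $0\le p\le l-k-1$ turns the exponent $n+1-(l-k)$ into $1-(l-k-p)$, so the lemma is equivalent to $\Phi_{k,l}(p)=0$ for every such $p$.

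The key observation is that $\Phi_{k,l}(n)$, once the constant factor $\lambda_j^{1-(l-k)}$ is pulled out, has the shape of an exponential polynomial $\sum_{j=k}^{l}\beta_j\lambda_j^n$ with bases $\lambda_k,\ldots,\lambda_l$. I would compare it with the classical partial-fraction expansion of the complete homogeneous symmetric polynomial: expanding $\prod_{i=k}^{l}(1-\lambda_i x)^{-1}$ in partial fractions (valid since the $\lambda_i$ are distinct) and reading off Taylor coefficients, one obtains, for $n\ge l-k$,
$$s_{k,l}^{(n)}=h_{n-(l-k)}(\lambda_k,\ldots,\lambda_l)=\sum_{j=k}^{l}\frac{\lambda_j^n}{\prod_{i\in\{k,\ldots,l\},\,i\neq j}(\lambda_j-\lambda_i)}=:\Psi_{k,l}(n).$$
By Lemma~\ref{lem3} we therefore have $\Phi_{k,l}(n)=s_{k,l}^{(n)}=\Psi_{k,l}(n)$ for every $n\ge l-k$. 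Since the $\lambda_j$'s are pairwise distinct, the functions $n\mapsto\lambda_j^n$ on $\Z$ are linearly independent, so the representation of an exponential polynomial in these bases is unique. Consequently the identity $\Phi_{k,l}=\Psi_{k,l}$, which holds on the infinite set $\{l-k,l-k+1,\ldots\}$, automatically extends to all $n\in\Z$.

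The final step is to observe that $\Psi_{k,l}(n)$ vanishes at $n=0,1,\ldots,l-k-1$. This is a Lagrange-interpolation identity: applying the interpolation formula on the $l-k+1$ pairwise distinct nodes $\lambda_k,\ldots,\lambda_l$ to the polynomial $P(x)=x^n$, the coefficient of $x^{l-k}$ on the left-hand side is $0$ whenever $\deg P=n<l-k$, while on the right-hand side this coefficient is exactly $\Psi_{k,l}(n)$. Combining this with the previous step gives $\Phi_{k,l}(p)=0$ for every $0\le p\le l-k-1$, which is precisely Lemma~\ref{lem6}.

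The whole argument is essentially mechanical once the right framework is in place; I expect the only subtle point to be Step~two, where one must be careful to justify that Lemma~\ref{lem3} really provides \emph{one} exponential-polynomial representation of $s_{k,l}^{(n)}$ in the bases $\lambda_k,\ldots,\lambda_l$ and invoke uniqueness to match it with the Lagrange representation $\Psi_{k,l}$. A fallback proof by induction on $l-k$, using the explicit recursions $c_j^{(k,l+1)}=-\lambda_jc_j^{(k,l)}/(\lambda_l-\lambda_j)$ for $k\le j\le l-1$ and $c_l^{(k,l+1)}=\sum_{j=k}^{l-1}\lambda_lc_j^{(k,l)}/(\lambda_l-\lambda_j)$ extracted in the proof of Lemma~\ref{lem3}, also works but requires more bookkeeping; the exponential-polynomial/Lagrange route is the cleaner one.
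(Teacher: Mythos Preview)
Your argument is correct and takes a genuinely different route from the paper. The paper proves Lemma~\ref{lem6} by a direct induction on $l$, feeding in the explicit recursions
\[
c_{j}^{(k,l+1)}=-\frac{\lambda_{j}c_{j}^{(k,l)}}{\lambda_{l}-\lambda_{j}}\quad(k\le j\le l-1),\qquad
c_{l}^{(k,l+1)}=\sum_{j=k}^{l-1}\frac{\lambda_{l}c_{j}^{(k,l)}}{\lambda_{l}-\lambda_{j}}
\]
obtained in the proof of Lemma~\ref{lem3}, and reducing the sum at level $l+1$ to the inductive hypothesis at level $l$ after a short algebraic manipulation. Your approach instead recognizes $s_{k,l}^{(n)}$ as the complete homogeneous symmetric polynomial $h_{n-(l-k)}(\lambda_{k},\dots,\lambda_{l})$, writes down its standard partial-fraction (Lagrange) representation $\Psi_{k,l}(n)=\sum_{j=k}^{l}\lambda_{j}^{n}/\prod_{i\ne j}(\lambda_{j}-\lambda_{i})$, and then invokes uniqueness of exponential-polynomial expansions in the distinct bases $\lambda_{k},\dots,\lambda_{l}$ to identify $\Phi_{k,l}\equiv\Psi_{k,l}$ from their agreement for all $n\ge l-k$. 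The vanishing for $0\le n\le l-k-1$ is then immediate from the classical fact that the leading Lagrange divided difference of $x^{n}$ on $l-k+1$ nodes is zero when $n<l-k$.

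What each approach buys: the paper's induction is entirely self-contained and uses nothing beyond the recursions already established, at the cost of some bookkeeping. Your route is more conceptual and shorter once the symmetric-function and Vandermonde facts are taken for granted; it also yields, as a by-product, a closed formula for the coefficients, namely $c_{j}^{(k,l)}=\lambda_{j}^{\,l-k-1}\big/\prod_{i\in\{k,\dots,l-1\}\setminus\{j\}}(\lambda_{j}-\lambda_{i})$, which the paper never needs to write down. The one point you rightly flag as delicate---that $\Phi_{k,l}(n)$ is genuinely of the form $\sum_{j=k}^{l}\beta_{j}\lambda_{j}^{n}$---is easily checked by expanding the difference quotients, and the uniqueness step follows from a Vandermonde argument since agreement holds on more than $l-k+1$ consecutive integers.
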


\begin{proof}
The proof is done by induction on $l\geq 2$. For $l=2$, we just have to check that $$c_{1}^{(1,2)}\, \frac{\lambda _{2}^{0}-\lambda _{1}^{0}}{\lambda _{2}-\lambda _{1}}=0,$$ which is obviously true. Supposing now that the induction assumption is true for some $l\geq 2$, consider $k$ with $1\le k\le l$ and $p$ with $0\le p\le l-k$. Then
$$\sum_{j=k}^{l}c_{j}^{(k,l+1)}\,\frac{\lambda _{l+1}^{-(l-k-p)}-\lambda _{j}^{-(l-k-p)}}{\lambda _{l+1}-\lambda _{j}}$$
is equal to
\begin{eqnarray*}
&& -{\lambda _{l+1}^{-(l-k-p)}}
 \sum_{j=k}^{l}{c_{j}^{(k,l+1)}}{\lambda _{j}^{-(l-k-p)}}\,\frac{\lambda _{l+1}^{(l-k-p)}-\lambda _{j}^{(l-k-p)}}{\lambda _{l+1}-\lambda _{j}}\\
&& \quad = -{\lambda _{l+1}^{-(l-k-p)}}
 \sum_{j=k}^{l}{c_{j}^{(k,l+1)}}{\lambda _{j}^{-(l-k-p)}}\,\sum_{m=0}^{l-k-p-1}\lambda _{j}^{m}\lambda _{l+1}^{l-k-p-1-m}\\
&& \quad = -\lambda _{l+1}^{-1} \sum_{m=0}^{l-k-p-1} \lambda _{l+1}^{-m}\left(\sum_{j=k}^{l}{c_{j}^{(k,l+1)}}{\lambda _{j}^{-(l-k-p-m)}}\right)\cdot
\end{eqnarray*}
It suffices now to show that each sum $$\sum_{j=k}^{l}{c_{j}^{(k,l+1)}}{\lambda _{j}^{-(l-k-p-m)}}$$ is equal to $0$. We have seen in the proof of Lemma \ref{lem3} that for $1\le k\le l-1$,
$$c_{j}^{(k,l+1)}=-\frac{\lambda _{j}c_{j}^{(k,l)}}{\lambda _{l}-\lambda _{j}}\quad \textrm{ for } k\le j\le l-1 \quad \textrm{ and } c_{l}^{(k,l+1)}=\sum_{j=k}^{l-1}\frac{\lambda _{l}c_{j}^{(k,l)}}{\lambda _{l}-\lambda _{j}}$$ and that $c_{l}^{(l,l+1)}=1$. Thus for $1\le k\le l-1$
\begin{eqnarray*}
 \sum_{j=k}^{l} {c_{j}^{(k,l+1)}}{\lambda _{j}^{-(l-k-p-m)}}&=&
 -\sum_{j=k}^{l-1} \frac{c_{j}^{(k,l)}}{\lambda _{l}-\lambda _{j}} \lambda _{j}^{-(l-k-p-m-1)}
 +\sum_{j=k}^{l-1} \frac{c_{j}^{(k,l)}}{\lambda _{l}-\lambda _{j}}\lambda _{l}^{-(l-k-p-m-1)}\\
 &=&\sum_{j=k}^{l-1}c_{j}^{(k,l)}\frac{\lambda _{l}^{1-(l-k-p-m)}-\lambda _{j}^{1-(l-k-p-m)}}{\lambda _{l}-\lambda _{j}}\cdot
\end{eqnarray*}
Since $p+m\le l-k-1$, this quantity vanishes by the induction assumption. For $k=l$, we only have to consider the case $p=0$, and here
$$c_{l,l+1}^{(l)}\,\frac{\lambda _{l+1}^{0}-\lambda _{l}^{0}}{\lambda _{l+1}-\lambda _{l}}=0.$$ This finishes the proof of Lemma \ref{lem6}.
\end{proof}
We have shown that $t_{k,l}^{(n_{p})}\to 0$ for each $1\le k\le l-1$, and it follows immediately that $||T^{n_{p}}-D^{n_{p}}||\to 0$ as $n_{p}\to +\infty $.
Now if $x\in H$ and $\varepsilon $
is any fixed positive number, take $l_{0}$ such that $\sum_{l\geq l_{0}+1}|x_{l}|^{2}\leq \varepsilon /2$.
Since
$$
||D^{n_{p}}x-x||^{2} \leq \left(\sum_{l=1}^{l_{0}}|\lambda _{l}^{n_{p}}-1|^{2}\right) \,||x||^{2}+2\sum_{l\geq l_{0}+1}|x_{l}|^{2}
$$
and $|\lambda _{l}^{n_{p}}-1|$ tends to $0$ for each $l\geq 2$,
it follows that $||D^{n_{p}}x-x||\to 0  $ as $n_{p}\to +\infty $
for any $x\in H$, hence $||T^{n_{p}}x-x||\to 0  $ which is the conclusion of Theorem \ref{th2}.
\end{proof}

\subsection{A characterization of uniformly rigid sequences for linear dynamical systems}
We now prove Theorem \ref{th2bis}. Clearly $(3)\Rightarrow (2)$. The implication $(2)\Rightarrow (1)$ is obvious: using the notation of Section 4.2 above, $||T^{n_{k}}e_{\lambda }-e_{\lambda }||$ tends to $0$ uniformly on $\sigma  _{p}(T)\cap \T=:K$ which is uncountable, i.e. $|\lambda ^{n_{k}}-1|$ tends to $0$ uniformly on $K$.
\par\smallskip
The converse implication $(1)\Rightarrow (3)$ follows from Fact \ref{fact0},
 the proof of Theorem \ref{th2ter} above and Lemma \ref{lem4} below. First replacing $K$  by a compact perfect subset of its closure, and then using Fact \ref{fact0}, we can suppose that $K$ is such that $|\lambda ^{n_{k}}-1|$ tends to $0$ uniformly on $K$ and for any $\varepsilon >0$ there exists a $\nu \in K\setminus \{1\}$ such that $\sup_{k\geq 0}|\nu ^{n_{k}}-1|\leq \varepsilon $. Then

\begin{lemma}\label{lem4}
Under the assumption above on $K$, there exists a perfect compact subset $K'$ of $\T$ such that $(K',d_{(n_{k})})$ is \sep\ and $\lambda ^{n_{k}}$ tends to $1$ uniformly on $K'$.
\end{lemma}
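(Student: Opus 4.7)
\textbf{Proof proposal for Lemma \ref{lem4}.} The plan is to mimic the Cantor-type construction used in Lemma \ref{lem5}, drawing the ``building blocks'' from $K$ itself (using the second hypothesis) so that the uniform convergence $\lambda^{n_k}\to 1$ on $K$ transfers to the product set we build. First, using the hypothesis that for every $\varepsilon>0$ there exists $\nu\in K\setminus\{1\}$ with $d_{(n_k)}(\nu,1)\leq\varepsilon$, choose inductively a sequence $(\mu_n)_{n\geq 1}$ in $K\setminus\{1\}$ satisfying
\begin{equation*}
d_{(n_k)}(\mu_1,1)<4^{-1},\qquad d_{(n_k)}(\mu_n,1)<4^{-n}\,d_{(n_k)}(\mu_{n-1},\overline{\mu}_{n-1})\ \text{ for }n\geq 2,
\end{equation*}
with $d_{(n_k)}(\mu_n,\overline{\mu}_n)$ decreasing. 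For each finite binary word $(s_1,\ldots,s_n)$ set $\lambda_{(0)}=\mu_1$, $\lambda_{(1)}=\overline{\mu}_1$, and recursively $\lambda_{(s_1,\ldots,s_{n-1},0)}=\lambda_{(s_1,\ldots,s_{n-1})}\,\mu_n$ and $\lambda_{(s_1,\ldots,s_{n-1},1)}=\lambda_{(s_1,\ldots,s_{n-1})}\,\overline{\mu}_n$. For $s\in 2^\omega$, define $\lambda_s=\lim_n\lambda_{(s_1,\ldots,s_n)}$ and put $K'=\{\lambda_s:s\in 2^\omega\}$.

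Next I would check, exactly as in the proof of Lemma \ref{lem5}, that $s\mapsto\lambda_s$ is a continuous injection from the Cantor space $2^\omega$ into $\T$, so $K'$ is compact and perfect; and that $(K',d_{(n_k)})$ is separable because the telescoping estimate gives $d_{(n_k)}(\lambda_s,\lambda_{(s_1,\ldots,s_p)})\leq \tfrac{2}{3}4^{-p}d_{(n_k)}(\mu_p,\overline{\mu}_p)$, so that the countable family $\{\lambda_{(s_1,\ldots,s_p)}:p\geq 1,\ (s_1,\ldots,s_p)\in\{0,1\}^p\}$ is $d_{(n_k)}$-dense in $K'$.

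The main thing to prove (and the only real point of departure from Lemma \ref{lem5}) is the uniform convergence $\lambda^{n_k}\to 1$ on $K'$. The key identity, repeated from Lemma \ref{lem5}, is
\begin{equation*}
|\lambda_s^{n_k}-1|\ \leq\ |\lambda_{(s_1,\ldots,s_p)}^{n_k}-1|\ +\ \tfrac{2}{3}4^{-p}\,d_{(n_k)}(\mu_p,\overline{\mu}_p),
\end{equation*}
valid for every $p\geq 1$ and every $s\in 2^\omega$. Since $\lambda_{(s_1,\ldots,s_p)}=\mu_1^{\pm 1}\cdots\mu_p^{\pm 1}$ (with signs determined by $s$), the elementary inequality $|ab-1|\leq|a-1|+|b-1|$ for unimodular $a,b$ (combined with $|\mu_j^{-n_k}-1|=|\mu_j^{n_k}-1|$) yields
\begin{equation*}
|\lambda_{(s_1,\ldots,s_p)}^{n_k}-1|\ \leq\ \sum_{j=1}^{p}|\mu_j^{n_k}-1|\ \leq\ p\cdot\sup_{\mu\in K}|\mu^{n_k}-1|,
\end{equation*}
and here we crucially use that each $\mu_j$ lies in $K$. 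Given $\varepsilon>0$, first choose $p$ so large that $\tfrac{2}{3}4^{-p}d_{(n_k)}(\mu_p,\overline{\mu}_p)\leq\tfrac{4}{3}\cdot 4^{-p}<\varepsilon/2$ (since $d_{(n_k)}(\mu_p,\overline{\mu}_p)\leq 2$), and then invoke the uniform convergence hypothesis on $K$ to pick $k_0$ such that $\sup_{\mu\in K}|\mu^{n_k}-1|<\varepsilon/(2p)$ for all $k\geq k_0$. Combining the two bounds gives $|\lambda_s^{n_k}-1|<\varepsilon$ for every $k\geq k_0$ and every $s\in 2^\omega$, which proves the uniform convergence on $K'$ and finishes the proof.

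The subtle obstacle is that the hypothesis gives \emph{two different} types of control on $K$ (uniform-in-$\lambda$ convergence, and existence of $\nu$ with $d_{(n_k)}(\nu,1)$ small), and the construction must respect both simultaneously: the fast decay of $d_{(n_k)}(\mu_n,1)$ guarantees separability of $K'$ in the $d_{(n_k)}$-metric, while the fact that the $\mu_n$ themselves lie in $K$ is what lets us bootstrap the finite partial products $\lambda_{(s_1,\ldots,s_p)}^{n_k}\to 1$ uniformly in $s$ from the uniform convergence on $K$.
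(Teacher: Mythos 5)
Your proposal is correct and follows essentially the same route as the paper's own proof: choose the $\mu_n$'s inside $K\setminus\{1\}$ using Fact \ref{fact0}, run the Cantor-type product construction of Lemma \ref{lem5}, and then bound $|\lambda_{(s_1,\ldots,s_p)}^{n_k}-1|$ by $p\,\|\lambda^{n_k}-1\|_{\infty,K}$ to upgrade the pointwise argument of Lemma \ref{lem5} to uniform convergence on $K'$. Your remark that $d_{(n_k)}(\mu_p,\overline{\mu}_p)\leq 2$ makes the choice of $p$ uniform in $k$ is a correct (and slightly more explicit) version of the step the paper performs.
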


\begin{proof}[Proof of Lemma \ref{lem4}]
The proof runs along the same lines as that of Lemma \ref{lem5}: we
start from elements $\mu _{n}$, $n\geq 1$, of $K\setminus \{1\}$ having the same properties as in Lemma \ref{lem5}
(which we know exist - this is why we had to use Fact \ref{fact0}), and we construct the unimodular numbers $\lambda _{s}$, $s\in 2^{\omega }$, as in Lemma \ref{lem5}, with
$$|\lambda _{s}^{n_{k}}-1|\leq|\lambda_{(s_{1},\ldots, s_{p})}^{n_{k}}-1|+\frac{2}{3}4^{-p}d_{(n_{k})}(\mu _{p},
\overline{\mu}_{p})$$ for any $k\geq 0$, $p\geq 1$, and $s \in
2^{\omega }$. Let $\gamma >0$, and take $p$ such that the second term
is less than $\gamma /2$. Then for any $s\in 2^{\omega }$ and any
$k\geq 0$ we have
$$|\lambda _{s}^{n_{k}}-1|\leq |\lambda _{(s_{1},\ldots, s_{p})}^{n_{k}}-1|+\frac{\gamma }{2}\leq
|\mu _{1}^{n_{k}}-1|+\ldots+|\mu _{p}^{n_{k}}-1|+\frac{\gamma }{2}\leq p\,||\lambda ^{n_{k}}-1||_{\infty ,K}+\frac{\gamma }{2}\cdot$$
Take $\kappa $ such that for any $k\geq \kappa $, $||\lambda ^{n_{k}}-1||_{\infty ,K}\leq \gamma /(2p)$: we have $|\lambda _{s}^{n_{k}}-1|\leq \gamma $ for any $s\in 2^{\omega }$ and $k\geq \kappa $, and this shows that $\lambda ^{n_{k}}$ tends to $1$ uniformly on the set $K'=\{\lambda _{s} \textrm{ ; } s\in 2^{\omega }\}$. Since $(K',d_{(n_{k})})$ is \sep, Lemma \ref{lem4} is proved.
\end{proof}

Now in the construction of the \op\ $T$, we choose the coefficients $\lambda _{l}$ in the set $K'$ given by Lemma \ref{lem4}. We have seen in the proof of Theorem \ref{th2} above that $||T^{n_{k}}-D^{n_{k}}||$ tends to $0$ as $n_{k}$ tends to infinity. So it suffices to prove that with the additional uniformity assumption of Theorem \ref{th2bis}, $||D^{n_{k}}-I||=\sup_{l\geq 1}|\lambda _{l}^{n_{k}}-1|$ tends to $0$ as $n_{k}$ tends to infinity. But $||D^{n_{k}}-I||\leq ||\lambda ^{n_{k}}-1||_{\infty ,K'}$ which tends to $0$, so  our claim is proved.
\par\medskip
\begin{proof}[Proof of Corollary \ref{cor}]
 If $(n_{k})_{k\geq 0}$ is any sequence with $n_{k+1}/n_{k}\to +\infty $, or if $n_{k}| n_{k+1}$ for any $k\geq 0$ and $\limsup n_{k+1}/n_{k}=+\infty $, we have seen in Propositions \ref{prop3} and \ref{prop22}
 that Theorem \ref{th2bis} applies, proving Corollary \ref{cor}. Theorem \ref{th2bis} also applies to the sequences $(q_{n})_{n\geq 0}$ considered in Example \ref{ex2}.
We thus obtain examples, in the linear framework, of \mpt s on a Hilbert space which are both \wmx\ in the \mea-theoretic sense and \ur.
\end{proof}

\begin{remark}
If $(n_k)_{k\geq 0}$ is such that $n_{k}| n_{k+1}$ for any $k\geq 0$ and $\limsup n_{k+1}/n_{k}=+\infty $, the proof of Proposition \ref{prop22} shows that the set $K=\{\lambda_{\varepsilon} \textrm{ ; }\varepsilon\in\{0,1\}^{\N}\}$ contains a dense subset of numbers $\lambda$ which are $N^{th}$ roots of $1$ for some $N\geq 1$. Hence, in all the constructions of operators $T=D+B$ considered here, it is possible to choose the numbers $\lambda_l$, $l\geq 1$, as being $N^{th}$ roots of $1$. In this way the \op\ $T$ becomes additionally chaotic (i.e. it is topologically transitive and has a dense set of periodic vectors). This gives further examples of chaotic \ops\ which are not topologicaly mixing (the first examples of such \ops\ were given in \cite{BaG1}), and shows in particular that there exist chaotic \ops\ which are uniformly rigid.
\end{remark}

\par\bigskip

\textbf{Acknowledgements.} We are deeply grateful to Jean-Pierre Kahane for his kind help with the proof of Proposition \ref{prop33}. We are also grateful to Mariusz Lema\'nczyk, Herv\'e Queff\'elec, Martine Queff\'elec and Maria Roginskaya for
helpful discussions.


\begin{thebibliography}{99999}

\par\bigskip

\bibitem{BaG1}
\textsc{C.~Badea, S.~Grivaux,}
\newblock Unimodular eigenvalues, uniformly distributed sequences and linear dynamics,
\newblock \emph{Adv. Math.} \textbf{211}  (2007), pp 766 -- 793.

\bibitem{BaG2}
\textsc{C.~Badea, S.~Grivaux,}
\newblock Size of the peripheral point spectrum under power or resolvent growth conditions,
\newblock \emph{J. Funct. Anal.}  \textbf{246} (2007), pp 302 -- 329.

\bibitem{BG}
\textsc{F.~Bayart, S.~Grivaux,}
\newblock Hypercyclicity and unimodular point spectrum,
\newblock \emph{J. Funct. Anal.} \textbf{226} (2005), pp 281 -- 300.

\bibitem{BG2}
\textsc{F.~Bayart, S.~Grivaux,}
\newblock Frequently hypercyclic operators,
\newblock \emph{Trans. Amer. Math. Soc.} \textbf{358} (2006), pp 5083 -- 5117.

\bibitem{BG3}
\textsc{F.~Bayart, S.~Grivaux,}
\newblock Invariant Gaussian measures for operators on Banach spaces and linear dynamics,
\newblock \emph{Proc. Lond. Math. Soc.}  \textbf{94}  (2007), pp 181 -- 210.

\bibitem{BM}
\textsc{F.~Bayart, \'E.~Matheron,}
\newblock Dynamics of linear operators,
\newblock \emph{Cambridge University Press} \textbf{179} (2009).

\bibitem{preprint}
\textsc{V.~Bergelson, A.~Del Junco, M.~Lema\'nczyk, J.~Rosenblatt,}
\newblock Rigidity and non-recurrence along sequences,
\newblock preprint 2011.

\bibitem{5}
\textsc{M.~Boshernitzan, E.~Glasner,}
\newblock On two recurrence problems,
\newblock \emph{Fund. Math.} \textbf{206} (2009), pp 113 -- 130.

\bibitem{CFS}
\textsc{I. P.~Cornfeld, S. V.~Fomin, Y. G.~Sinai,}
\newblock Ergodic theory,
\newblock \emph{Grundlehren der Mathematischen Wissenschaften} \textbf{245}, Springer-Verlag, New York, 1982.

\bibitem{DFGP}
\textsc{M.~De la Rosa, L.~Frerick, S.~Grivaux, A.~Peris,}
\newblock Frequent hypercyclicity, chaos, and unconditional Schauder decompositions,
\newblock to appear in \emph{Israel J. Math.}.



\bibitem{Fl}
\textsc{E.~Flytzanis,}
\newblock Unimodular eigenvalues and linear chaos in Hilbert spaces,
\newblock \emph{Geom. Funct. Anal.}  \textbf{5}  (1995), pp 1 -- 13.

\bibitem{FW}
\textsc{H.~Furstenberg, B.~Weiss,}
\newblock The finite multipliers of infinite ergodic transformations, in ``The structure of attractors in dynamical systems'' (Proc. Conf., North Dakota State Univ., Fargo, N.D., 1977),  pp. 127 --132,
\newblock \emph{Lecture Notes in Math.} \textbf{668}, Springer, Berlin, 1978.

\bibitem{GK}
\textsc{J.~Galambos, I.~K\'atai,}
\newblock A simple proof for the continuity of infinite convolutions of binary random variables,
\newblock  \emph{Statist. Probab. Lett.}  \textbf{7}  (1989),  pp 369 -- 370.

\bibitem{GM}
\textsc{S.~Glasner, D.~Maon,}
\newblock Rigidity in topological dynamics,
\newblock \emph{Erg. Th. Dyn. Syst.}  \textbf{9}  (1989), pp 309--320.

\bibitem{1}
\textsc{E.~Glasner,}
\newblock Ergodic theory via joinings,
\newblock \emph{Mathematical Surveys and Monographs} \textbf{101}, American Mathematical Society, Providence, RI, 2003.

\bibitem{3}
\textsc{E.~Glasner,}
\newblock Classifying dynamical systems by their recurrence properties,
\newblock \emph{Topol. Methods Nonlinear Anal.} \textbf{24} (2004), pp 21 -- 40.

\bibitem{G}
\textsc{S.~Grivaux,}
\newblock A new class of frequently hypercyclic operators,
\newblock to appear in \emph{Indiana Univ. Math. J.}.

\bibitem{HW}
\textsc{G, H.~Hardy, E. M.~Wright,}
\newblock An introduction to the theory of numbers,
\newblock \emph{Oxford University Press}, Oxford, 2008.

\bibitem{J}
\textsc{B.~Jamison,}
\newblock Eigenvalues of modulus $1$,
\newblock \emph{Proc. Amer. Math. Soc.} \textbf{16} (1965), pp 375 -- 377.

\bibitem{JKLSS}
\textsc{J.~James, T.~Koberda, K.~Lindsey, C. E.~Silva, P.~Speh,}
\newblock On ergodic transformations that are both weakly mixing and uniformly rigid,
\newblock \emph{New York J. Math.} \textbf{15} (2009), pp 393 -- 403.


\bibitem{Ka}
\textsc{G. Kalish,}
\newblock On operators on separable Banach spaces with arbitrary prescribed point spectrum,
\newblock \emph{Proc. Amer. Math. Soc.}  \textbf{34} (1972), pp 207 -- 208.

\bibitem{KS}
\textsc{A. B.~Katok, A. M.~Stepin,}
\newblock Approximations in ergodic theory,
\newblock \emph{Russian Math. Surveys} \textbf{22} (1967), pp 77 -- 102.

\bibitem{K}
\textsc{A.~Katok,}
\newblock Approximation and generity in abstract ergodic theory, Notes 1985.

\bibitem{6}
\textsc{T. W.~K\"orner,}
\newblock Recurrence without uniform recurrence,
\newblock \emph{Ergodic Theory Dynam. Systems} \textbf{7} (1987), pp 559 -- 566.

\bibitem{KN}
\textsc{L.~Kuipers, H.~Niederreiter,}
\newblock Uniform distribution of sequences.
\newblock Pure and Applied Mathematics, \emph{Wiley-Interscience}, New York-London-Sydney, 1974.

\bibitem{2}
\textsc{M.~Lema\'nczyk, C.~Mauduit,}
\newblock Ergodicity of a class of cocycles over irrational rotations,
\newblock \emph{J. London Math. Soc.} \textbf{49} (1994), pp 124 -- 132.

\bibitem{Na}
\textsc{M.~G. Nadkarni,}
\newblock Spectral Theory of Dynamical Systems,
\newblock \emph{Birkh\"auser Advanced Texts: Basel Textbooks},
Birkh\"auser Verlag, Basel, 1998.

\bibitem{Ni}
\textsc{N.~Nikolskii,}
\newblock \emph{Selected problems of weighted approximation and
spectral analysis},
\newblock Proceedings of the Steklov Institute of
Mathematics, \textbf{120} (1974), American Mathematical Society,
Providence, 1976.

\bibitem{Pe}
\textsc{V.~Peller,}
\newblock Hankel operators and their applications,
\newblock \emph{Springer Monographs in Mathematics},
Springer-Verlag, New-York, 2003.


\bibitem{Pet}
\textsc{K.~Petersen,}
\newblock Ergodic Theory,
\newblock \emph{Cambridge Studies in Advanced Mathematics}, Cambridge University Press, 1983.

\bibitem{Pet2}
\textsc{K.~Petersen,}
\newblock Lectures on ergodic theory,
\newblock \emph{http://www.math.unc.edu/Faculty/petersen}.


\bibitem{Queff}
\textsc{M.~Queff\'elec,}
\newblock Substitution dynamical systems -- spectral analysis, second edition,
\newblock \emph{Lecture Notes in Mathematics} \textbf{1294}, Springer-Verlag, Berlin, 2010.

\bibitem{R}
\textsc{T.~Ransford,}
\newblock Eigenvalues and power growth,
\newblock \emph{Israel J. Math.} \textbf{146} (2005), pp 93 -- 110.


\bibitem{RR}
\textsc{T.~Ransford, M.~Roginskaya,}
\newblock Point spectra of partially power-bounded operators,
\newblock \emph{J. Funct. Anal.} \textbf{230} (2006), pp 432 -- 445.

\bibitem{Sha}
\textsc{J.~Shallit,}
\newblock Simple continued fractions for some irrational numbers, II,
\newblock \emph{J. Numb. Th.} \textbf{14} (1982), pp 228 -- 231.

\bibitem{4}
\textsc{A. M.~Stepin,}
\newblock Spectral properties of generic dynamical systems,
\newblock \emph{Izv. Akad. Nauk SSSR Ser. Mat.} \textbf{50} (1986), pp 801 --834.

\bibitem{NaFo}
\textsc{B.~Sz.-Nagy, C.~Foia{\c{s}},}
\newblock Sur les contractions de l'espace de {H}ilbert, IV,
\newblock \emph{Acta Sci. Math. Szeged} \textbf{21} (1960), pp 251 -- 259.

\bibitem{Wa}
\textsc{P.~Walters,}
\newblock An Introduction to Ergodic Theory,
\newblock \emph{Graduate Texts in Mathematics} \textbf{79},
Springer-Verlag, New-York, Berlin, 1982.


\end{thebibliography}
\end{document}